\theoremstyle{plain}
\newtheorem{theor}{Theorem}[section]
\newtheorem{question}{Question}[section]
\newtheorem{lem}[theor]{Lemma}
\newtheorem{prop}[theor]{Proposition}
\theoremstyle{definition}
\newtheorem{rems}[theor]{Remarks}
\newtheorem{rem}[theor]{Remark}
\mathchardef\emptyset="001F
\numberwithin{equation}{section}
\newcommand{\Sp}{\mathbb S}
\newcommand{\e}{\varepsilon}
\newcommand{\dist}{\operatorname{dist}}
\newcommand{\Q}{\mathbb Q}
\newcommand{\R}{\mathbb R}
\newcommand{\Z}{\mathbb Z}
\newcommand{\Ic}{\mathcal I}
\newcommand{\cvf}{\rightharpoonup}
\newcommand{\cvfs}{\overset*{\rightharpoonup}}
\newcommand{\loc}{{\operatorname{loc}}}
\newcommand{\Id}{\operatorname{Id}}
\newcommand{\E}{\mathbb{E}}
\newcommand{\per}{{\operatorname{per}}}
\newcommand{\Aa}{\boldsymbol a}
\newcommand{\bb}{\bar{\boldsymbol b}}
\newcommand{\Ld}{\operatorname{L}}
\newcommand{\Div}{{\operatorname{div}}}
\newcommand{\curl}{\operatorname{curl}}
\newcommand{\step}[1]{\noindent \textit{Step} #1.}
\newcommand{\substep}[1]{\noindent \textit{Substep} #1.}
\newcommand{\Pm}{\mathbb{P}}
\newcommand{\pr}[1]{\mathbb{P}\left[ #1 \right]}
\newcommand{\expec}[1]{\mathbb{E}\left[ #1 \right]}
\title[Homogenization of the 2D Euler system: lakes and porous media]
{Homogenization of the 2D Euler system:\\lakes and porous media}
\author[M. Duerinckx]{Mitia Duerinckx}
\address[Mitia Duerinckx]{
Universit\'e Libre de Bruxelles, D\'epartement de Math\'ematique, 1050~Brussels, Belgium}
\email{mitia.duerinckx@ulb.be}
\author[A. Gloria]{Antoine Gloria}
\address[Antoine Gloria]{Sorbonne Universit\'e, CNRS, Universit\'e de Paris, Laboratoire Jacques-Louis Lions, 75005~Paris, France \&   Universit\'e Libre de Bruxelles, D\'epartement de Math\'ematique, 1050~Brussels, Belgium}
\email{antoine.gloria@sorbonne-universite.fr}
\begin{document}
\selectlanguage{english}

\begin{abstract}
This work is devoted to the long-standing open problem of homogenization of 2D perfect incompressible fluid flows, such as the 2D Euler equations with impermeable inclusions modeling a porous medium, and such as the lake equations. The main difficulty is the homogenization of the transport equation for the associated fluid vorticity. In particular, a localization phenomenon for the vorticity could in principle occur, which would rule out the separation of scales. Our approach combines classical results from different fields to prevent such phenomena and to prove homogenization towards variants of the Euler and lake equations: we rely in particular on the homogenization theory for elliptic equations with stiff inclusions, on criteria for unique ergodicity of dynamical systems, and on complex analysis in form of extensions of the Rad\'o--Kneser--Choquet theorem.
\end{abstract}

\maketitle
\setcounter{tocdepth}{1}
\tableofcontents

\section{Introduction}
 This work is devoted to the homogenization of perfect incompressible fluid flows described by the 2D Euler equations in several heterogeneous settings. Starting from the vorticity formulation, the question naturally splits into the homogenization of the div-curl problem defining the fluid velocity and the homogenization of the transport equation for the fluid vorticity. Due to heterogeneities, the fluid velocity typically has large high-frequency oscillations around its effective behavior. Due to this multiscale structure of the velocity field, the homogenization of the corresponding transport equation for the vorticity is a highly delicate question, and a well-defined limit equation might in general not exist, cf.~\cite[Section~3.2]{Jabin-Tzavaras-09}. Yet, the fluid velocity is not arbitrary: it is given as the solution of a div-curl problem, which amounts to an elliptic problem with the vorticity as a source term. Leveraging on this specific structure of the fluid velocity, we manage to homogenize the transport equation for the vorticity in certain circumstances. As we shall see, the main difficulty is to avoid the possibility of a localization phenomenon, and to show that the vorticity cannot be trapped due to heterogeneities. This is solved by using the invertibility of the relevant corrected harmonic coordinates adapted to the heterogeneities, which is obtained as a consequence of the work of Alessandrini and Nesi~\cite{Alessandrini-Nesi-01,Alessandrini-Nesi-18} --- an extension of the Rad\'o--Kneser--Choquet theorem.
We focus on the following two model problems:
\begin{enumerate}[\quad(1)]
\item homogenization of the 2D Euler system with impermeable inclusions modeling a porous medium;
\smallskip\item homogenization of the lake equations.
\end{enumerate}
Our main results for those models are described in the upcoming two sections in the periodic case, cf.~Sections~\ref{sec:inclusions}--\ref{sec:lakes}. In Section~\ref{sec:random}, we further comment on the random setting, which leads to some surprisingly delicate probabilistic open questions.
Note that these homogenization questions of the 2D Euler equations have a similar flavor as previous work on the 2D Euler equations with oscillatory initial data, see e.g.~\cite{DiPerna-Majda-87,Cheverry-06,Cheverry-Gues-08,Cheverry-08}.

\subsection{Homogenization of impermeable inclusions}\label{sec:inclusions}
We study the homogenization limit of the 2D Euler equations in a porous medium made of small impermeable holes. We start by describing the setting. We assume that the porous medium is characterized on the microscopic scale by an infinite collection of inclusions $\{I_n\}_n$ in $\R^2$, where each $I_n\subset\R^2$ is a connected open set.
Given $\e>0$ denoting the microscopic scale, and given an initial velocity field $u_\e^\circ\in \Ld^2(\R^2)^2$ with \mbox{$\Div(u_\e^\circ)=0$} and $\curl(u_\e^\circ)\in\Ld^1\cap\Ld^\infty(\R^2)$, we consider the unique global weak solution $u_\e\in\Ld^\infty_\loc(\R_+;\Ld^2(\R^2)^2)$ with $\curl(u_\e)\in\Ld^\infty_\loc(\R_+;\allowbreak\Ld^1\cap\Ld^\infty(\R^2))$ of the Euler equations with $\e$-rescaled impermeable inclusions at $\Ic_\e:=\e\Ic:=\bigcup_n\e I_n$,
\begin{equation}\label{eq:imperm-main}
\left\{\begin{array}{ll}
\partial_t u_\e+u_\e\cdot\nabla u_\e+\nabla p_\e=f_\e,&\text{in $\R^2\setminus\Ic_\e$},\\
\Div(u_\e)=0,&\text{in $\R^2\setminus\Ic_\e$},\\
u_\e\cdot\nu=0,&\text{on $\partial\Ic_\e$},\\
\int_{\e\partial I_n}u_\e\cdot\tau=0,&\text{for all $n$},\\
u_\e|_{t=0}=u_\e^\circ,
\end{array}\right.
\end{equation}
where $f_\e\in \Ld^\infty_\loc(\R_+;\Ld^2(\R^2)^2)$ is a given force field, which we assume to satisfy $\curl(f_\e)\in \Ld^\infty_\loc(\R_+;\allowbreak\Ld^1\cap\Ld^\infty(\R^2))$ and $f_\e\cdot\tau=0$ on $\partial\Ic_\e$ ($\nu$ is the outward unit normal vector and $\tau=\nu^\bot$ is the tangent vector at the boundary of the inclusions).
We refer to~\cite{Yudovich-63,Kikuchi,Majda-Bertozzi} for the well-posedness theory.
The problem of the macroscopic limit $\e\downarrow0$ for this system was first considered in~\cite{Mikelic-Paoli-99,Lions-Masmoudi-05}, where the weakly nonlinear regime was treated by means of two-scale convergence methods.
Beyond this particular regime, the problem has drawn quite a lot of attention but remains relatively poorly understood.
The critical scaling for the size of the inclusions was identified in~\cite{Noel-Lacave-Masmoudi-15,Lacave-Masmoudi-16}: denoting by $d_\e$ the typical minimal distance between inclusions and by $r_\e$ their typical diameter,  in the dilute regime~${r_\e}/{d_\e}\to0$ the fluid is not perturbed by the porous medium, while in the concentrated regime~\mbox{${r_\e}/{d_\e}\to\infty$} the fluid does not penetrate the porous medium. In the critical regime~$r_\e\sim d_\e$, a nontrivial effective modification of the Euler equations is expected to emerge. This was recently studied in~\cite{Hillairet-Lacave-Wu-22} under two important restrictions:
\begin{enumerate}[---]
\item The fluid vorticity in~\cite{Hillairet-Lacave-Wu-22} is assumed to be supported away from the porous medium: this strongly simplifies the problem as it allows to use mere weak compactness to pass to the limit in the transport equation for the vorticity. Circumventing this restriction is the main motivation of our work.
\smallskip\item The analysis in~\cite{Hillairet-Lacave-Wu-22} is based on the method of reflections to approximate the fluid velocity: for that reason, the result is valid in some perturbative diagonal regime~${r_\e}\ll d_\e$ only, when the inclusions have a vanishing volume fraction.
\end{enumerate}
In the present work, we relax both restrictions and obtain the first homogenization result in the critical regime $r_\e\sim d_\e$.
We start with the periodic case and we refer to Section~\ref{sec:random} below for the random setting.
\begin{theor}[Homogenization of impermeable inclusions]\label{th:imper}
Let $\{I_n\}_n$ be a periodic collection of subsets of $\R^2$
satisfying the following two conditions for some $\rho>0$:
\begin{enumerate}[---]
\item \emph{Regularity:} For all $n$, the inclusion $I_n$ is open, connected, has diameter bounded by $\frac1\rho$, and satisfies interior and exterior ball conditions with radius~$\rho$.
\item \emph{Hardcore condition:} For all $n\ne m$, we have $\dist(I_n,I_m)\ge\rho$.
\end{enumerate}
For all $\e>0$, let~$u_\e$ be the unique solution of the 2D Euler system~\eqref{eq:imperm-main} with $\e$-rescaled impermeable inclusions at $\Ic_\e=\e\Ic=\bigcup_n\e I_n$, and let the data of the problem converge in the macroscopic limit $\e\downarrow0$ in the sense of
\begin{equation}\label{eq:conv-data-imperm}
\begin{array}{rcll}
\curl(u_\e^\circ)\mathds1_{\R^2\setminus\Ic_\e}&\cvfs&w^\circ,&\text{in $\Ld^1\cap\Ld^\infty(\R^2)$,}\\
f_\e\mathds1_{\R^2\setminus\Ic_\e}&\cvfs&f,&\text{in $\Ld^\infty_\loc(\R_+;\Ld^2(\R^2))$,}\\
\curl(f_\e)\mathds1_{\R^2\setminus\Ic_\e}&\cvfs&g,&\text{in $\Ld^\infty_\loc(\R_+;\Ld^1\cap\Ld^\infty(\R^2))$.}
\end{array}
\end{equation}
By compactness, we then have, along a subsequence,
\begin{equation}\label{eq:lim-omu-res}
\begin{array}{rcll}
\curl(u_\e)\mathds1_{\R^2\setminus\Ic_\e}&\cvfs&w,&\text{in $\Ld^\infty_\loc(\R_+;\Ld^1\cap\Ld^\infty(\R^2))$},\\
u_\e\mathds1_{\R^2\setminus\Ic_\e}&\cvfs& u,&\text{in $\Ld^\infty_\loc(\R_+;\Ld^2(\R^2))$},
\end{array}\end{equation}
for some limit point $(w,u)$.
Assume that the latter is nowhere one-dimensional with rational direction: more precisely, we assume that it satisfies the following non-degeneracy condition,
\begin{gather}
\nexists\, U\subset\R_+\times\R^2~\text{open},~~\nexists\, e=(e_1,e_2)\in\Sp^1~\text{with $e_1/e_2\in\Q$},\nonumber\\
\text{such that $u(t,x)=e^\bot u_0(t,e\cdot x)$ on $U$ for some $u_0$,}\label{eq:non-deg-re}\\
\text{and $w(t,x)=w_0(t,e\cdot x)$ on $U$ for some $w_0$.}\nonumber
\end{gather}
Then, $(w,u)$ is the unique global weak solution of the following homogenized system in~$\R^2$,
\begin{equation}\label{eq:homog-imperm}
\left\{\begin{array}{l}
\partial_tw+\tfrac1{1-\lambda}\Div(w u)=g,\\
\Div(u)=0,\\
\curl(\bm{\bar m}u)=w,\\
w|_{t=0}=w^\circ,
\end{array}\right.
\end{equation}
where $\lambda:=|\Ic\cap Q|$ is the inclusions' volume fraction in the periodicity cell $Q=[-\frac12,\frac12)^2$ and
where $\bm{\bar m}$ is the symmetric matrix given by
\begin{equation}\label{eq:def-barm}
\bm{\bar m}\,:=\,J^T\bm{\bar a}J\in\R^{2\times2},\qquad J:=\begin{pmatrix}0&-1\\1&0\end{pmatrix},
\end{equation}
in terms of the homogenized matrix $\bm{\bar a}\in\R^{2\times2}$ defined componentwise by
\begin{equation}\label{eq:def-bara}
\bm{\bar a}_{ij}\,:=\,\delta_{ij}+\int_Q\nabla\varphi_i\cdot\nabla\varphi_j\,=\,\int_Q(\nabla\varphi_i+e_i)\cdot(\nabla\varphi_j+e_j),\qquad1\le i,j\le 2,
\end{equation}
where for~\mbox{$1\le i\le2$} the corrector $\varphi_i\in H^1_\per(\R^2)$ is the unique mean-zero periodic solution of
\begin{equation}\label{eq:varphi-imperm}
\left\{\begin{array}{ll}
-\triangle\varphi_i=0,&\text{in $\R^2\setminus\cup_nI_n$},\\
\nabla\varphi_i+e_i=0,&\text{in $\cup_nI_n$},\\
\int_{\partial I_n}\partial_\nu\varphi_i=0,&\text{for all $n$}.
\end{array}\right.
\end{equation}
\end{theor}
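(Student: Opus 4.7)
The plan is to split the problem in two: the \emph{elliptic step} homogenizes the div-curl system defining $u_\e$ from $\omega_\e:=\curl(u_\e)$ at each fixed time, while the \emph{hyperbolic step} homogenizes the transport equation for $\omega_\e$. The first is by standard H-convergence for stiff inclusions; the second is the main difficulty and relies on the univalence of the corrected harmonic coordinates together with unique ergodicity of the limit flow.

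First, Yudovich-type a priori bounds yield $u_\e$ bounded in $\Ld^\infty_{\loc}(\R_+;\Ld^2(\R^2))$ and $\omega_\e$ bounded in $\Ld^\infty_{\loc}(\R_+;\Ld^1\cap\Ld^\infty(\R^2))$ uniformly in $\e$, so weak-$*$ compactness produces the limit point $(w,u)$ in~\eqref{eq:lim-omu-res} along a subsequence, with $\Div(u)=0$ by linearity. For the elliptic step, the zero-circulation condition allows to write $u_\e=\nabla^\perp\psi_\e$ in the fluid for a stream function $\psi_\e$ that is constant on each component of $\partial\Ic_\e$; extending by these constants inside the inclusions yields a globally continuous $\psi_\e$ which is constant in $\Ic_\e$ and harmonic away from it up to the source $\omega_\e$. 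This is precisely the stiff-inclusion elliptic problem whose periodic correctors are the $\varphi_i$ of~\eqref{eq:varphi-imperm}. Classical H-convergence then delivers a limit $\bar\psi$ with $\Div(\bm{\bar a}\nabla\bar\psi)=w$, which by the definition $\bm{\bar m}=J^T\bm{\bar a}J$ together with $u=\nabla^\perp\bar\psi$ translates into the homogenized relation $\curl(\bm{\bar m}u)=w$ of~\eqref{eq:homog-imperm}.

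The heart of the proof is the transport equation. Since $u_\e\cdot\nu=0$ on $\partial\Ic_\e$, writing the vorticity equation in divergence form and extending by zero yields
\[
\partial_t\big(\omega_\e\mathds1_{\R^2\setminus\Ic_\e}\big)+\Div\!\big(u_\e\omega_\e\mathds1_{\R^2\setminus\Ic_\e}\big)=\curl(f_\e)\mathds1_{\R^2\setminus\Ic_\e}
\]
globally on $\R^2$, so the problem reduces to identifying the weak-$*$ limit of the oscillatory flux $u_\e\omega_\e\mathds1_{\R^2\setminus\Ic_\e}$. A priori this limit is not determined by $(u,w)$ alone; worse, the vorticity could in principle \emph{localize} near a subfamily of inclusions and resist any microscopic mixing. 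To rule this out, the plan proceeds in three steps. First, use the corrector map $\Phi(y):=y+\varphi(y)$, which by the Alessandrini--Nesi extension of the Rad\'o--Kneser--Choquet theorem is a bi-Lipschitz homeomorphism of the torus collapsing each inclusion $I_n$ to a single point, so that $y=\Phi_\e^{-1}(x)$ provides a global microscopic change of variables on $\R^2\setminus\Ic_\e$. Second, in these corrected coordinates the fast-variable profile of $\omega_\e$ is governed in the limit by a one-parameter family of autonomous transport equations on the cell $Q\setminus\Ic$, parameterised by the macroscopic position and driven by the effective velocity $u$. Third, the non-degeneracy assumption~\eqref{eq:non-deg-re} is precisely what ensures that the effective flow is nowhere a rational shear, so that the associated skew-product dynamics on $\R^2\times(Q\setminus\Ic)$ is uniquely ergodic at a.e.\ macroscopic point; a Weyl-type equidistribution argument then shows that this fast-variable profile must be uniform over $Q\setminus\Ic$ in the limit and equal to $\frac{1}{1-\lambda}w$. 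This identifies
\[
u_\e\omega_\e\mathds1_{\R^2\setminus\Ic_\e}\cvfs\tfrac{1}{1-\lambda}\,uw,
\]
and gives the limit equation $\partial_tw+\tfrac{1}{1-\lambda}\Div(uw)=g$. Finally, uniqueness for the homogenized system~\eqref{eq:homog-imperm} among bounded-vorticity solutions follows from a Yudovich-type argument applied to the elliptic relation $\curl(\bm{\bar m}u)=w$, upgrading subsequential convergence into convergence of the whole sequence.

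The main obstacle is clearly this third step, \emph{ruling out microscopic localization of $\omega_\e$}: it genuinely requires both ingredients, namely the Alessandrini--Nesi univalence of the corrected coordinates (to ensure that the microscopic geometry cannot trap the vorticity) and the non-degeneracy hypothesis~\eqref{eq:non-deg-re} (to promote the resulting change of variables into an equidistribution statement via unique ergodicity).
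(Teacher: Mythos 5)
Your plan tracks the paper's proof in all essentials: homogenize the div-curl problem by standard stiff-inclusion H-convergence, extract the oscillation profile of the vorticity, use the Alessandrini--Nesi invertibility of the corrected harmonic coordinates to rule out microscopic trapping, and use the non-degeneracy hypothesis~\eqref{eq:non-deg-re} to conclude unique ergodicity of the cell flow and hence uniformity of the fast-variable profile. These are exactly the two ingredients the paper combines, so the proposal is essentially the same approach.

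Two small inaccuracies and one vagueness are worth flagging. First, the map $\Phi(y)=y+\varphi(y)$ is not bi-Lipschitz on the torus: since $\nabla\varphi_i+e_i=0$ inside each $I_n$, the map $\Phi$ is constant on each inclusion, so $\det\nabla\Phi\to0$ as one approaches $\partial I_n$ from outside; Theorem~\ref{th:diffeo-rigid} gives a diffeomorphism only on $\R^2\setminus\Ic$ onto its image $\R^2\setminus\Phi(\Ic)$, with degenerate Jacobian at the boundary. Second, you propose to implement the microscopic coordinate change $y=\Phi_\e^{-1}(x)$ directly in the transport equation; because of the Jacobian degeneracy this would need careful weights near the collapsed boundaries. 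The paper sidesteps this by using two-scale compactness (Lemma~\ref{lem:compact-2sc}) to extract a two-scale limit $W(t,x,y)$ of $\tilde w_\e$ and then reads off, from the $O(\e^{-1})$ term in the weak formulation, the cell-level \emph{invariant measure} equation $\Div(W(t,x,\cdot)R_{t,x})=0$ on $Q\setminus\Ic$, with $R_{t,x}$ proportional to $(e+\nabla\varphi_e)^\bot$; the absence of critical points (Theorem~\ref{th:diffeo-rigid}) and the incommensurability of $\fint_Q R_{t,x}$ for $(t,x)$ outside a Lebesgue-null set (which is where~\eqref{eq:non-deg-re} enters, via the regularity of $u$) then give $W(t,x,\cdot)\equiv\tfrac{1}{1-\lambda}w(t,x)\mathds1_{Q\setminus\Ic}$ by Arnol$'$d's ergodicity result for incompressible cell flows. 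Your ``Weyl-type equidistribution'' language points in the right direction but should be replaced by this invariant-measure/unique-ergodicity argument; in particular, the step from~\eqref{eq:non-deg-re} to the statement that the rational-slope set is Lebesgue-null needs the additional regularity $u\in C_\loc(\R_+;C^r_\loc)$ coming from the limit div-curl equation.
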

Note that this result is only conditional in the sense that we need the non-degeneracy condition~\eqref{eq:non-deg-re} on the weak limit point $(w,u)$ to avoid problematic directions.
As shown in the proof, this condition is crucial to ensure the well-posedness of the homogenization problem for the vorticity transport equation.
Although it is unclear to us how this condition could be replaced by an assumption on the data $u_\e^\circ,f_\e$, it should be viewed as a genericity condition.
\begin{rems}\label{rems:imperm}
A few comments are in order.
\begin{enumerate}[(a)]
\item \emph{Limit equations:}\\
As the homogenized matrix $\bm{\bar a}$ is not scalar in general, the limit equations~\eqref{eq:homog-imperm} do not take the form of the standard 2D Euler equations: the limit vorticity $\omega$ is transported by $\frac1{1-\lambda}u$, where $u$ is the limit velocity, but the div-curl problem defining the velocity from the vorticity is solved by the \emph{deformed} Biot--Savard law
\[u=\nabla^\bot(\Div\bm{\bar a}\nabla)^{-1}w,\]
instead of the usual $\nabla^\bot\triangle^{-1}w$ for 2D Euler.

\smallskip
\item \emph{Dilute expansion:}\\
If inclusions are dilute, we can make the limit equations more explicit by appealing to an effective medium expansion in form of the Clausius--Mossotti formula. Under suitable dilution assumptions as described in~\cite{DG-16a,DG-Einstein,DG-CM}, denoting by $0\le \lambda<1$ the volume fraction of the inclusions, if each inclusion $I_n$ is the translation of a given solid~$I_0$, we find
\begin{equation}\label{eq:Einstein-imperm}
\bm{\bar a}\,=\,\Id+\lambda\bm{\bar a}^0+o(\lambda),
\end{equation}
where $\bm{\bar a}^0$ is given by
\[\bm{\bar a}^0_{ij}\,:=\,|I_0|^{-1}\int_{\R^2}\nabla\varphi_i^0\cdot\nabla\varphi_j^0\,=\,|I_0|^{-1}\int_{\partial I_0}x_j\,\partial_\nu(\varphi_i^0+x_i),\]
in terms of the single-inclusion corrector problem
\begin{equation*}
\left\{\begin{array}{ll}
-\triangle\varphi_i^0=0,&\text{in $\R^2\setminus I_0$},\\
\nabla\varphi_i^0+e_i=0,&\text{in $I_0$},\\
\int_{\partial I_0}\partial_\nu\varphi_i^0=0,\\
\varphi_i^0\in\dot H^1(\R^2).\\
\end{array}\right.
\end{equation*}
Combined with this dilute approximation~\eqref{eq:Einstein-imperm}, the limit equations~\eqref{eq:homog-imperm} coincide with the effective equations derived in~\cite{Hillairet-Lacave-Wu-22} (to the exception that we consider here a periodic porous medium that fills the whole space, hence $\bm{\bar a}^0$ is constant, while in~\cite{Hillairet-Lacave-Wu-22} the domain of the porous medium is a compact set that does not intersect the support of the vorticity).
In case of spherical inclusions, the single-inclusion corrector problem can be solved explicitly,  which leads to $\bm{\bar a}^0=2\Id$, cf.~\cite{DG-16a,DG-Einstein,DG-CM}.

\smallskip\item \emph{Extensions:}\\
$\bullet$ We assume that the set of inclusions $\{I_n\}_n$ describing the porous medium is periodic, but the proof is easily adapted to a locally periodic setting. More precisely, this amounts to assuming $\mathds1_{\Ic_\e}(x)=\chi(x,\frac x\e)$ where $\chi(x,\cdot)$ is periodic for all $x\in\R^2$. In that setting, we note that the homogenized matrix $\bm{\bar a}$ would further depend on the macroscopic variable $x$. This allows to cover in particular the setting considered in~\cite{Hillairet-Lacave-Wu-22}, where the porous medium only fills a given compact subset of the domain.
\\[1mm]
$\bullet$ We consider for simplicity the Euler equations in the whole plane $\R^2$, but the proof is easily adapted to a bounded domain with suitable boundary conditions.
\\[1mm]
$\bullet$ A similar result can be obtained if periodicity is replaced by quasiperiodicity, in which case such point processes are called (Delone) quasilattices; see e.g.~the review article~\cite{MR1668082}. In that case, in the non-degeneracy condition~\eqref{eq:non-deg-re}, rational directions $e=(e_1,e_2)\in\Sp^1$ with $e_1/e_2\in\Q$ need to be replaced by directions $e\in\Sp^1$ such that 
$e^\bot \R\cap F\Z^M\ne\{0\}$, where $F\in\R^{2\times M}$ stands for the winding matrix of the quasiperiodic structure. 
\end{enumerate}
\end{rems}
We briefly comment on corresponding homogenization questions for {\it viscous} fluid flows in porous media. The most natural model is the Navier--Stokes equations in a perforated domain with no-slip boundary conditions. In case of a fixed positive viscosity, homogenization is by now well-understood and leads to the Navier--Stokes equations, to Darcy's law, or to the Navier--Stokes--Brinkman equations, depending on the regime between the typical size of the inclusions and their distances. In contrast, the case of vanishing viscosity is much more intricate due to viscous boundary layers, and we refer to~\cite{Lacave-Mazzucato-16,Hofer-23} and the references therein.
As no-slip boundary conditions do not lead to a useful vorticity formulation, the approach of the present work is of no use in that case.

\subsection{Homogenization of the 2D lake equations}\label{sec:lakes}
We turn to the homogenization problem for the lake equations, which take form of the 2D Euler equations with a modified divergence constraint $\Div(bu)=0$, where $b$ encodes the effect of the bottom topography in a shallow-water limit, cf.~\cite{Levermore-Oliver-Titi-1}. Let the bottom topography be described on the microscopic scale by some scalar depth function $b:\R^2\to\R_+$ that is assumed to be uniformly non-degenerate in the sense of
\begin{equation}\label{eq:unifell}
\tfrac1{C_0}\,\le\,b(x)\,\le\,C_0,\qquad\text{for all $x\in\R^2$},
\end{equation}
for some $C_0>0$.
Given $\e>0$ denoting the microscopic scale, and given an initial velocity field $u_\e^\circ\in \Ld^2(\R^2)^2$ with $\Div(b(\tfrac\cdot\e)u_\e^\circ)=0$ and $\curl(u_\e^\circ)\in\Ld^1\cap\Ld^\infty(\R^2)$, we consider the unique global weak solution $u_\e\in\Ld^\infty_\loc(\R_+;\Ld^2(\R^2)^2)$ with $\curl(u_\e)\in\Ld^\infty_\loc(\R_+;\Ld^1\cap\Ld^\infty(\R^2))$ of the lake equations with oscillatory depth $b(\frac\cdot\e)$,
\begin{equation}\label{eq:lake}
\left\{\begin{array}{ll}
\partial_t u_\e+u_\e\cdot\nabla u_\e+\nabla p_\e=f_\e,\\
\Div(b(\tfrac\cdot\e) u_\e)=0,\\
u_\e|_{t=0}=u_\e^\circ,
\end{array}\right.
\end{equation}
where $f_\e\in \Ld^\infty_\loc(\R_+;\Ld^2(\R^2)^2)$ is a given force field, which we assume to satisfy $\curl(f_\e)\in \Ld^\infty_\loc(\R_+;\Ld^1\cap\Ld^\infty(\R^2))$.
We refer to~\cite{Levermore-Oliver-Titi-1,Levermore-Oliver-Titi-2,D-17} for the well-posedness theory for the lake equations in the non-degenerate setting~\eqref{eq:unifell}.
Note that the 2D Euler equations~\eqref{eq:imperm-main} with impermeable inclusions can be seen as a particular case of the lake equations in the degenerate setting when the depth function~$b$ would be sent to $0$ inside the inclusions: impermeable inclusions amount to islands with vanishing depth. This analogy will actually be useful in Section~\ref{sec:harmcoord}.
Homogenization of the lake equations was first considered in~\cite{Bresch-GV-07} in the weakly nonlinear regime, but to our knowledge no result has been obtained beyond this particular case.
By a similar analysis as for the 2D Euler equations with impermeable inclusions, we obtain the following result.

\begin{theor}[Homogenization of the lake equations]\label{th:lake}
Let $b:\R^2\to\R_+$ be a periodic function that satisfies~\eqref{eq:unifell} and that is locally H\"older-continuous.
For all $\e>0$, let~$u_\e$ be the unique global weak solution of the lake equations~\eqref{eq:lake} with $\e$-rescaled depth function~$b(\frac\cdot\e)$, and let the data of the problem converge in the macroscopic limit $\e\downarrow0$ in the sense of
\begin{equation}\label{eq:conv-data-lake}
\begin{array}{rcll}
\curl(u_\e^\circ)&\cvfs&w^\circ,&\text{in $\Ld^\infty(\R^2)$,}\\
f_\e&\cvfs&f,&\text{in $\Ld^\infty_\loc(\R_+;\Ld^2(\R^2))$,}\\
\curl(f_\e)&\cvfs&g,&\text{in $\Ld^\infty_\loc(\R_+;\Ld^1\cap\Ld^\infty(\R^2))$.}
\end{array}
\end{equation}
By compactness, we have, along a subsequence,
\begin{equation}\label{eq:conv-res-lake}
\begin{array}{rcll}
\curl(u_\e)&\cvfs&w,&\text{in $\Ld^\infty(\R_+;\Ld^1\cap\Ld^\infty(\R^2))$},\\
u_\e&\cvfs&u,&\text{in $\Ld^\infty_\loc(\R_+;\Ld^2(\R^2))$},\\
b(\tfrac\cdot\e)u_\e&\cvfs&v,&\text{in $\Ld^\infty_\loc(\R_+;\Ld^2(\R^2))$}.
\end{array}
\end{equation}
for some limit point $(w,u,v)$. Assume that the latter is nowhere one-dimensional with rational direction: more precisely, we assume that the pair $(w,v)$ satisfies the following non-degeneracy condition,
\begin{gather}
\nexists\, U\subset\R_+\times\R^2~\text{open},~~\nexists\, e=(e_1,e_2)\in\Sp^1~\text{with $e_1/e_2\in\Q$},\nonumber\\
\text{such that $v(t,x)=e^\bot v_0(t,e\cdot x)$ on $U$ for some $v_0$,}\label{eq:non-deg-re-lake}\\
\text{and $w(t,x)=w_0(t,e\cdot x)$ on $U$ for some $w_0$.}\nonumber
\end{gather}
Then, we necessarily have $v=\bb u$ and the pair $(w,u)$ is the unique global weak solution of the following homogenized problem in~$\R^2$,
\begin{equation}\label{eq:homog-lake}
\left\{\begin{array}{l}
\partial_tw+\tfrac1{b_0}\Div(w\bb u)=g,\\
\Div(\bb u)=0,\\
\curl(u)=w,\\
w(t,x)|_{t=0}=w^\circ,
\end{array}\right.
\end{equation}
where $b_0:=\fint_Q b$ is the averaged depth and
where $\bb$ is the symmetric matrix given by
\begin{equation}\label{eq:def-barb-lake}
\bb\,:=\,J^T\bm{\bar a}^{-1}J
\,\in\,\R^{2\times2},\qquad J:=\begin{pmatrix}0&-1\\1&0\end{pmatrix},
\end{equation}
in terms of the homogenized matrix $\bm{\bar a}\in\R^{2\times2}$ defined componentwise by
\begin{equation}\label{eq:def-bara-lake}
\bm{\bar a}_{ij}\,:=\,\int_Q b^{-1}(\nabla\psi_i+e_i)\cdot(\nabla\psi_j+e_j)\,=\,e_i\cdot\int_Q b^{-1}(\nabla\psi_j+e_j),\qquad1\le i,j\le 2,
\end{equation}
where for $1\le i\le2$ the corrector $\psi_i\in H^1_\per(\R^2)$ is the unique mean-zero periodic solution of
\begin{equation}\label{eq:def-cor-lake}
\Div(b^{-1}(\nabla\psi_i+e_i))=0,\qquad\text{in $\R^2$}.
\end{equation}
\end{theor}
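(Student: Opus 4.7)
The plan is to mirror the scheme of Theorem~\ref{th:imper}, with the depth function~$b$ playing the role of a weight rather than of an indicator of a perforated domain. Uniform bounds come first: conservation of the weighted kinetic energy $\int_{\R^2}b(\tfrac\cdot\e)|u_\e|^2$ together with~\eqref{eq:unifell} controls $u_\e$ in $\Ld^\infty_\loc(\R_+;\Ld^2(\R^2))$, while transport of vorticity along the log-Lipschitz flow of $u_\e$ preserves $\|w_\e(t,\cdot)\|_{\Ld^1\cap\Ld^\infty}$, yielding the weak-$*$ limits~\eqref{eq:conv-res-lake}. Passing to the limit in the div-curl problem is then standard periodic homogenization: introducing a stream function $\phi_\e$ with $b(\tfrac\cdot\e)u_\e=J\nabla\phi_\e$, the relation $\curl u_\e=w_\e$ becomes the self-adjoint elliptic equation $\Div(b^{-1}(\tfrac\cdot\e)\nabla\phi_\e)=w_\e$, whose homogenization with the correctors~\eqref{eq:def-cor-lake} gives $\phi_\e\cvf\bar\phi$ in $H^1_\loc$ with $\Div(\bm{\bar a}\nabla\bar\phi)=w$, and a two-scale expansion identifies $v=J\nabla\bar\phi$ and $u=J\bm{\bar a}\nabla\bar\phi$ (via the identity $\fint_Q b^{-1}(\nabla\psi_i+e_i)=\bm{\bar a} e_i$); the algebraic identity $J^T\bm{\bar a}^{-1}J\cdot J\bm{\bar a}=J$ then produces the div-curl structure $v=\bb u$, $\Div(\bb u)=0$, $\curl u=w$.

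The heart of the proof is the passage to the limit in the vorticity transport equation $\partial_tw_\e+\Div(w_\e u_\e)=\curl f_\e$, where the product $w_\e u_\e$ couples two weakly convergent sequences oscillating at the same scale~$\e$; its weak limit is therefore a priori not $wu$. The key step is to exploit the divergence constraint $\Div(b(\tfrac\cdot\e)u_\e)=0$ to recast the equation as a genuine transport equation for the renormalized vorticity $\tilde w_\e:=w_\e/b(\tfrac\cdot\e)$,
\[
\partial_t\tilde w_\e+u_\e\cdot\nabla\tilde w_\e\,=\,(\curl f_\e)/b(\tfrac\cdot\e),
\]
and to prove that $\tilde w_\e\to w/b_0$ \emph{strongly} in $\Ld^p_\loc$ for every finite~$p$. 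Given this strong compactness, strong-weak convergence yields $w_\e u_\e=\tilde w_\e\,(b(\tfrac\cdot\e)u_\e)\cvfs(w/b_0)\,v=\tfrac1{b_0}w\bb u$, and the same averaging delivers the effective source $g/b_0$, so that the transport term of~\eqref{eq:homog-lake} follows.

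The strong compactness of $\tilde w_\e$ will be established as in Theorem~\ref{th:imper} by changing variables via the corrected harmonic coordinates $\Psi_\e(x):=x+\e\psi(x/\e)$, which by the Alessandrini--Nesi extension of the Rad\'o--Kneser--Choquet theorem~\cite{Alessandrini-Nesi-01,Alessandrini-Nesi-18} applied to $b^{-1}$ form a bi-Lipschitz homeomorphism of $\R^2$. The two-scale structure $u_\e(x)\simeq U(x,x/\e)$ with $U(x,y)=b^{-1}(y)J\sum_i\partial_i\bar\phi(x)(\nabla\psi_i+e_i)(y)$ shows that at each frozen macroscopic point~$\bar x$, the cell field $y\mapsto U(\bar x,y)$ is divergence-free on $\T^2$ with respect to the weighted measure $b(y)\,dy$ --- this is exactly~\eqref{eq:def-cor-lake} --- with rotation vector $v(\bar x)=J\nabla\bar\phi(\bar x)$. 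Under the non-degeneracy condition~\eqref{eq:non-deg-re-lake}, this rotation direction is nowhere locked in a one-dimensional rational structure, so that Denjoy/Kolmogorov-type criteria for divergence-free flows on $\T^2$ provide unique ergodicity with invariant probability measure $b(y)\,dy/b_0$. The first integrals of the cell flow are then constant in $y$, which forces the two-scale limit of $\tilde w_\e$ to be $y$-independent and delivers the required strong compactness; averaging against the invariant measure then identifies the effective velocity and source as announced. A Yudovich-type uniqueness argument for the homogenized system (with deformed Biot--Savart law $u=J\bm{\bar a}\nabla(\Div\bm{\bar a}\nabla)^{-1}w$) then gives convergence of the whole sequence and a posteriori the identification $v=\bb u$. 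The main obstacle is precisely the unique-ergodicity step: checking that~\eqref{eq:non-deg-re-lake} is the sharp genericity assumption excluding persistent rational-direction regions where the cell flow would have periodic orbits, the vorticity could localize at scale~$\e$, and separation of scales would fail.
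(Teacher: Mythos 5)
Your overall architecture matches the paper's: homogenize the div-curl problem via the stream function (which does give the uniformly elliptic equation $\Div(b^{-1}(\cdot/\e)\nabla\phi_\e)=w_\e$, the corrector~\eqref{eq:def-cor-lake}, and the algebra $v=\bb u$), then handle the vorticity transport by recasting the cell problem as an ergodicity question for the divergence-free cell field, invoking the Alessandrini--Nesi extension of Rad\'o--Kneser--Choquet (applied to the coefficient $b^{-1}$) to exclude fixed points and the non-degeneracy condition~\eqref{eq:non-deg-re-lake} to exclude rational rotation directions. The renormalization $\tilde w_\e:=w_\e/b(\cdot/\e)$, which converts $\Div(w_\e u_\e)$ into the genuine transport term $u_\e\cdot\nabla\tilde w_\e$, is a nice reformulation; the paper performs the same division, but at the level of the two-scale limit rather than the sequence itself, setting $W'(t,x,y):=b(y)^{-1}W(t,x,y)$ and $R'_{t,x}:=bR_{t,x}$ so that $R'_{t,x}$ is incompressible with mean $v(t,x)$.

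The genuine gap is the claim that $\tilde w_\e\to w/b_0$ \emph{strongly} in $\Ld^p_\loc$. This is asserted but not substantiated: the inference ``the two-scale limit of $\tilde w_\e$ is $y$-independent, which delivers the required strong compactness'' is false as stated --- $y$-independence of the two-scale limit places no constraint on, say, oscillations at other scales and does not upgrade weak to strong convergence (unless one additionally proves convergence of the $\Ld^2$ norms, i.e.\ strong two-scale convergence, which you do not address). Moreover, the attribution ``as in Theorem~\ref{th:imper}'' is incorrect: the paper's proof of Theorem~\ref{th:imper} (Propositions~\ref{prop:transport-imper0} and the conclusion) never establishes strong $\Ld^p$ compactness of the vorticity; it relies solely on two-scale compactness (Lemma~\ref{lem:compact-2sc} / \cite[Theorem~3.2]{E-92}) to extract $W$, on the $1/\e$-scale term in the weak formulation to derive the invariance equation $\Div(W R_{t,x})=0$, and on unique ergodicity of the cell flow to identify $W$ up to a constant in $y$. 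That identification, combined with the corrector result for $u_\e$ (strong two-scale convergence of the velocity), already suffices to pass to the limit in the product $w_\e u_\e$ and obtain the homogenized drift $\tfrac{1}{b_0}w\bb u$; no strong $\Ld^p$ compactness of the vorticity is needed. To repair your argument, replace the claimed strong convergence by the two-scale convergence route (Proposition~\ref{prop:lake-2} in the paper) and conclude as there; alternatively, if you want to insist on strong compactness, that would require a separate argument (e.g.\ controlling the defect measure), which is not given here and is not in the paper either.
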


\begin{rems}
Comments are in order.
\begin{enumerate}[(a)]
\item \emph{Limit equations:}\\
Maybe surprisingly, since the homogenized matrix $\bm{\bar a}$ is not scalar in general, we find that the limit equations~\eqref{eq:homog-lake} do not take the form of lake equations with constant depth: indeed, the heterogeneous depth $b(\tfrac\cdot\e)$ is replaced by an effective (matrix) value~$\bb$ in the div-curl problem defining the limit velocity $u$ from the vorticity $w$, but the limit vorticity is transported by the incompressible field~$b_0^{-1}\bb u$ instead of the limit velocity~$u$ itself.
In fact, in terms of $v=\bb u$, we find that the pair $(w,v)$ satisfies an equation of the form~\eqref{eq:homog-imperm} with $1-\lambda$ and $\bm{\bar m}$ replaced by $b_0$ and $\bb^{-1}$, respectively,
\begin{equation*}
\left\{\begin{array}{l}
\partial_tw+\tfrac1{b_0}\Div(w v)=g,\\
\Div(v)=0,\\
\curl(\bb^{-1}v)=w,\\
w|_{t=0}=w^\circ.
\end{array}\right.
\end{equation*}

\item \emph{Dilute expansion:}\\
Let us consider the case of a depth function $b$ that is constant everywhere except in a periodic array of inclusions. More precisely, let $b$ be given by
\[b\,:=\,\left\{\begin{array}{ll}
\alpha,&\text{in $\R^d\setminus\bigcup_nI_n$,}\\
\beta,&\text{in $\bigcup_nI_n$,}
\end{array}\right.\]
for some given values $0<\alpha,\beta<\infty$, where the collection of inclusions $\{I_n\}_n$ is taken as in Theorem~\ref{th:imper}.
In this setting, if inclusions are dilute, we can make the limit equation more explicit by appealing to an effective medium expansion in form of the Clausius--Mossotti formula:
under suitable dilution assumptions as described in~\cite{DG-CM}, denoting by $0\le\lambda<1$ the volume fraction of the inclusions, if each inclusion $I_n$ is the translation of a given solid~$I_0$, we find
\begin{equation}\label{eq:CM}
\bm{\bar a}\,=\,\alpha^{-1} \Id+\lambda\bm{\bar a}^0+o(\lambda),
\end{equation}
where $\bm{\bar a}^0$ is given by
\[\bm{\bar a}^0_{ij}\,:=\, (\beta^{-1}-\alpha^{-1}) e_i\cdot \fint_{I_0} (\nabla\psi_j^0+e_j),\]
in terms of the following single-inclusion corrector problem in $\R^2$,
\begin{equation*}
\Div( b^{-1} (\nabla \psi_j^0+e_j))=0.
\end{equation*}
In case of spherical inclusions, the single-inclusion corrector problem can be solved explicitly, which leads to
\[\bm{\bar a}^0\,=\,\frac{2\alpha^{-1} (\beta^{-1}-\alpha^{-1})}{\alpha^{-1}+\beta^{-1}}\Id\,=\,\frac{2(\alpha-\beta)}{\alpha(\alpha+\beta)}\Id,\]
cf.~\cite{DG-CM}, in which case we recover a standard lake equation to leading order in the homogenization limit.

\smallskip
\item \emph{Extensions:}\\
$\bullet$ We assume that the depth function $b$ is periodic, but the proof is easily adapted to a locally periodic setting. More precisely, this amounts to replacing $b(\frac x\e)$ by~$c(x,\frac x\epsilon)$ in~\eqref{eq:lake} provided that $c(x,\cdot)$ is periodic for all $x\in\R^2$. In that setting, we note that the homogenized matrix $\bm{\bar a}$ would further depend on the macroscopic variable $x$.
\\[1mm]
$\bullet$ We consider for simplicity the lake equations in the whole plane $\R^2$, but the proof is easily adapted to a bounded domain with suitable boundary conditions.
\\[1mm]
$\bullet$ A similar result can be obtained if periodicity is replaced by quasiperiodicity.
\end{enumerate}
\end{rems}

\subsection{Random setting and open problems}\label{sec:random}
We may wonder whether the above periodic homogenization results also hold in the random setting. An advantage of the random setting is that mixing ensures ergodicity in {\it every} direction, which would hopefully allow to avoid the non-degeneracy condition~\eqref{eq:non-deg-re} for limit points. Yet, as we briefly explain, our analysis leads us to a delicate probabilistic open question.
For shortness, we focus here on the homogenization of impermeable inclusions, but the argument is similar for lakes.
We first properly introduce the random setting:
\begin{enumerate}[---]
\item Let $\{I_n\}_n$ be a collection of random subsets of $\R^2$. More precisely, each $I_n$ is assumed to be a random Borel subset of $\R^2$, constructed on some underlying probability space~$(\Omega,\Pm)$: letting $\mathcal B(\R^2)$ denote the Borel $\sigma$-algebra on $\R^2$, this means that each $I_n$ is a map $\Omega\to\mathcal B(\R^2)$ such that~$\mathds1_{I_n}$ is measurable on the product $\R^2\times\Omega$ and such that $\mathds1_{I_n}(x)$ is measurable on~$\Omega$ for all $x\in\R^2$. We implicitly assume that $\Omega$ is endowed with the $\sigma$-algebra generated by $\{I_n\}_n$.
\smallskip\item We assume that the random set $\Ic=\bigcup_nI_n$ is stationary, which means that the finite-dimensional law of the translated random Borel set $x+\Ic$ does not depend on the shift~$x\in\R^2$.
\smallskip\item As~$\Omega$ is endowed with the $\sigma$-algebra generated by $\{I_n\}_n$, for a measurable function $g$ on~$\Omega$, we can define $T_zg$ as being obtained from $g$ by replacing the underlying $\{I_n\}_n$ by $\{-z+I_n\}_n$. By stationarity, $T_z$ defines an isometry on $\Ld^p(\Omega)$ for all $1\le p\le\infty$. A random field $h$ on $\R^2\times\Omega$ is said to be stationary if it is of the form $h(z,\cdot)=T_zg$ for some random variable $g$.
\smallskip\item We further assume that the random set $\Ic=\bigcup_nI_n$ is strongly mixing, which means that for any measurable set $A\subset\Omega$ we have $\E[\mathds1_AT_z\mathds1_A]\to\pr{A}^2$ as $|z|\uparrow\infty$, where $\E$ denotes the expectation.
\smallskip\item We assume that for some $\rho>0$ the random set $\Ic$ almost surely satisfies the same regularity and hardcore conditions as in Theorem~\ref{th:imper}.
\end{enumerate}
In this setting, for~{$1\le i\le2$}, we can define the corrector $\varphi_i\in H^1_\loc(\R^2;\Ld^2(\Omega))$ as the unique almost sure weak solution of~\eqref{eq:varphi-imperm} such that $\nabla\varphi_i$ is stationary, $\E[\nabla\varphi_i]=0$, and say anchoring~$\varphi_i(0)=0$; see e.g.~\cite[Section~8.6]{JKO94}. We shall use the notation $\varphi:=(\varphi_1,\varphi_2)$ and $\varphi_e:=e\cdot\varphi$. The homogenized matrix $\bm{\bar a}\in\R^{2\times2}$ is then defined componentwise by
\begin{equation}\label{eq:def-bara-re}
\bm{\bar a}_{ij}\,:=\,\delta_{ij}+\E[\nabla\varphi_i\cdot\nabla\varphi_j]\,=\,\E[(\nabla\varphi_i+e_i)\cdot(\nabla\varphi_j+e_j)],\qquad1\le i,j\le 2.
\end{equation}
In these terms, our analysis involves the following key question.

\begin{question}\label{qu:unique}
Given any $e\in\Sp^1$, if $\mu\in\Ld^2_\loc(\R^2;\Ld^2(\Omega))$ is a stationary random field that satisfies almost surely in the weak sense
\begin{equation}\label{eq:mu-invar}
\Div(\mu (e+\nabla\varphi_e)^\bot)=0,\qquad\text{in $\R^2$},
\end{equation}
then is it true that $\mu$ is necessarily almost surely constant in $\R^2\setminus\Ic$?
\end{question}

Uniqueness of the invariant measure associated with the corrector field is crucial for the homogenization of the vorticity transport equation.
As the following shows, the answer to this question is positive provided that correctors have bounded second moments. Yet, correctors are generically unbounded in 2D even under the strongest mixing assumptions.

\begin{lem}\label{lem:unique}
As above, let $\{I_n\}_n$ be a random collection of subsets of $\R^2$ such that the random set $\Ic=\bigcup_nI_n$ is stationary and strongly mixing, and such that for some $\rho>0$ it almost surely satisfies the same regularity and hardcore conditions as in Theorem~\ref{th:imper}.
Then, the answer to Question~\ref{qu:unique} is positive if $\sup_x\E[|\varphi(x)|^2]<\infty$.
\end{lem}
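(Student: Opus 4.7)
The plan is to read the equation $\Div(\mu(e+\nabla\varphi_e)^\bot)=0$ as a stream-function constraint, combine it with the invertibility of the corrected harmonic coordinates $\Phi(x):=x+\varphi(x)$ delivered by Alessandrini--Nesi, and conclude by ergodicity of the shift in direction $e^\bot$. Since $e+\nabla\varphi_e$ is distributionally curl-free in $\R^2$, its rotation $(e+\nabla\varphi_e)^\bot$ is divergence-free and admits the global stream function $\Psi_e(x):=e\cdot x+\varphi_e(x)$; the equation for $\mu$ then amounts to $\nabla^\bot\Psi_e\cdot\nabla\mu=0$ in the weak sense, meaning that $\mu$ is constant along the level sets of $\Psi_e$. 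By Alessandrini--Nesi (the same tool as in the proof of Theorem~\ref{th:imper}) the map $\Phi$ is a.s.\ a homeomorphism from $\R^2\setminus\Ic$ onto the complement of the countable set $\{\Phi(I_n)\}_n$; in particular $\Psi_e=e\cdot\Phi$ is a.s.\ surjective onto $\R$ and, for a.e.\ $s\in\R$, the level set $\{\Psi_e=s\}\cap(\R^2\setminus\Ic)$ is a single connected arc. One may thus introduce a measurable function $F:\Omega\times\R\to\R$ such that $\mu(x;\omega)=F_\omega(\Psi_e(x;\omega))$ a.s.\ on $\R^2\setminus\Ic$.

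The role of the assumption $\sup_x\E[|\varphi(x)|^2]<\infty$ is to upgrade $\varphi_e$ from a cocycle with stationary increments into an honest stationary random field: by a standard Gordin/Kipnis--Varadhan argument, this $\Ld^2$-boundedness yields $\tilde g\in\Ld^2(\Omega)$ such that, modulo the random constant $\tilde g$ (which does not affect $\nabla\varphi_e$), one may write $\varphi_e(x;\omega)=\tilde g(T_x\omega)$. With this modification one has $\varphi_e(x+z;\omega)=\varphi_e(x;T_z\omega)$, and hence $\Psi_e(x+z;\omega)=e\cdot z+\Psi_e(x;T_z\omega)$. Coupling this with the stationarity of $\mu$, namely $\mu(x+z;\omega)=\mu(x;T_z\omega)$, together with the representation $\mu(y;\omega)=F_\omega(\Psi_e(y;\omega))$, and using the surjectivity of $\Psi_e(\cdot;T_z\omega)$ to quantify over $s$, I arrive at the key identity
\begin{equation*}
F_\omega(e\cdot z+s)\,=\,F_{T_z\omega}(s)\qquad\text{for all }s\in\R,\ z\in\R^2,\ \text{a.s.\ }\omega.
\end{equation*}

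Specializing to $z=te^\bot$ (so $e\cdot z=0$) yields $F_\omega=F_{T_{te^\bot}\omega}$ for all $t\in\R$, so that $F_\omega$ is invariant under the one-parameter subgroup $(T_{te^\bot})_{t\in\R}$. The strong mixing of $\Ic$ applied along $z=te^\bot\to\infty$ entails ergodicity of this $\R$-action on $\Omega$, forcing $F_\omega$ to be a.s.\ equal to a deterministic function $F:\R\to\R$. Re-injecting this into the key identity with $z=te$ gives $F(s+t)=F(s)$ for all $s,t\in\R$, so $F$ must be constant, and $\mu$ is a.s.\ constant on $\R^2\setminus\Ic$.

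The principal obstacle is the upgrade of $\varphi_e$ to a genuinely stationary field: without the $\Ld^2$-boundedness of $\varphi$, the identity $\varphi_e(x+z;\omega)=\varphi_e(x;T_z\omega)$ breaks down, the shift action does not faithfully transport the level sets of $\Psi_e$, and the whole transport-along-level-sets mechanism collapses. This is exactly the source of difficulty behind Question~\ref{qu:unique} in full generality, as 2D correctors are generically unbounded even under the strongest mixing hypotheses.
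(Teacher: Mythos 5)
Your proof is correct and takes essentially the same approach as the paper's: promote $\varphi_e$ to a genuinely stationary field using the $\Ld^2$-moment bound, represent $\mu$ as a function $F_\omega$ of the corrected coordinate $\Psi_e=e\cdot x+\varphi_e(x)$ along its level sets, establish a.s.\ surjectivity of $\Psi_e$ via the Alessandrini--Nesi diffeomorphism (Theorem~\ref{th:diffeo-rigid}), and then combine stationarity of $\mu$ with the shift identity $\Psi_e(x+z;\omega)=e\cdot z+\Psi_e(x;T_z\omega)$ to derive the cocycle relation $F_\omega(e\cdot z+s)=F_{T_z\omega}(s)$, from which ergodicity along $e^\bot$ and translation invariance along $e$ yield constancy. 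The paper packages this last phase slightly differently (it first shows $\mu_0(r)$ is deterministic for each $r$ and then re-uses stationarity to conclude $\bar\mu_0$ is constant, rather than specializing $z=te$ in a single identity), but the argument is the same.
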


\begin{proof}
Let $e\in\Sp^1$ and let $\mu\in\Ld^2_\loc(\R^2;\Ld^2(\Omega))$ be a stationary random field that satisfies equation~\eqref{eq:mu-invar} almost surely in the weak sense.
If the corrector $\varphi$ satisfies the assumption $\sup_x\E[|\varphi(x)|^2]<\infty$, then there  exists a {\it stationary} random field $\tilde\varphi\in H^1_\loc(\R^2;\Ld^2(\Omega)^2)$ such that $\nabla\tilde\varphi=\nabla\varphi$; this can be checked e.g.\@ by means of a massive approximation argument (as standard in elliptic homogenization). Let $\tilde\varphi_e:=e\cdot\tilde\varphi$.
Almost surely, as $\mu$ satisfies equation~\eqref{eq:mu-invar} in the weak sense, we deduce that it is constant along characteristics of the vector field $(e+\nabla\varphi_e)^\bot=(e+\nabla\tilde\varphi_e)^\bot$, hence we have
\begin{equation}\label{eq:mu-redmu0}
\mu(x)\,=\,\mu_0(x\cdot e+\tilde\varphi_e(x)),\qquad\text{for all $x\in\R^2\setminus\Ic$,}
\end{equation}
for some measurable map $\mu_0$ on $\R\times\Omega$. 

We first argue that the map $\R^2 \setminus \Ic \to\R:x\mapsto x\cdot e+\tilde\varphi_e(x)$ is almost surely surjective. By Theorem~\ref{th:diffeo-rigid}, the map $\Phi:\R^2\setminus\Ic \to \R^2:x \mapsto x+\tilde\varphi(x)$ is almost surely a diffeomorphism onto its image $\R^2 \setminus \Phi(\Ic)$. In particular, we find that almost surely $\Phi(\Ic)=\bigcup_n \Phi(I_n)$ is the union of disjoint bounded open sets in $\R^2$, so that for all $b\in \Sp^1$ and $r\in \R$ the line $L_{b,r}=\{x \in \R^2:x\cdot b=r\}$ satisfies $\mathcal{H}^1(L_{b,r} \setminus \Phi(\Ic))>0$ (where $\mathcal H^1$ stands for the Hausdorff measure), and thus also $\mathcal{H}^1( \Phi^{-1} (L_{b,r} \setminus \Phi(\Ic)))>0$. Applied to $b=e$ and to any $r\in \R$, this implies that the map $\R^2 \setminus \Ic \to\R:x\mapsto x\cdot e+\tilde\varphi_e(x)$ is almost surely surjective.

With this observation at hand, let us now return to the analysis of $\mu$.
For a stationary random field $h$, note that the definition of the isometries $\{T_z\}_{z\in\R^2}$ on the probability space entails $T_zh(x)=T_zT_xh=h(x+z)$.
Given $z\in\R^2$, recalling~\eqref{eq:mu-redmu0} and using the stationarity of $\mu$, we then find almost surely for $x\in\R^2\setminus(-z+\Ic)$,
\begin{eqnarray*}
(T_z\mu_0)(x\cdot e+(T_z\tilde \varphi_e)(x))
&=&(T_z\mu)(x)
\\
&=&\mu(x+z)
\\
&=& \mu_0(x\cdot e+z\cdot e+\tilde\varphi_e(x+z)).
\end{eqnarray*}
By the fundamental theorem of calculus, the stationarity of $\nabla\tilde\varphi_e=\nabla \varphi_e$ yields
\begin{eqnarray*}
(T_z \tilde\varphi_e)(x)-(T_z \tilde \varphi_e)(0)
&=&\int_0^1x\cdot\nabla(T_z \tilde\varphi_e)(tx)\,dt\\
&=&\int_0^1x\cdot(\nabla\tilde\varphi_e)(tx+z)\,dt\\
&=&\tilde\varphi_e(x+z)-\tilde\varphi_e(z),
\end{eqnarray*}
so the above becomes
\begin{equation*}
(T_z\mu_0)(x\cdot e+(T_z\tilde \varphi_e)(x))
\,=\, \mu_0(x\cdot e+(T_z \tilde \varphi_e)(x)+z\cdot e+ \tilde \varphi_e(z)-(T_z \tilde \varphi_e)(0)).
\end{equation*}
By the almost sure surjectivity of the map $\R^2 \setminus(-z+\Ic) \to\R:x\mapsto x\cdot e+(T_z\tilde\varphi_e)(x)$, there exists $x\in \R^2 \setminus(-z+\Ic)$ such that $x\cdot e+(T_z\tilde \varphi_e)(x)=0$, and therefore we obtain almost surely for all~$z\in \R^2$,
\begin{equation}\label{eq:mu0-stat}
(T_z\mu_0)(0)\,=\,\mu_0(z\cdot e+ \tilde \varphi_e(z)-(T_z \tilde \varphi_e)(0))\,=\,\mu_0(z\cdot e),
\end{equation}
where the last equality follows from the stationarity of $\tilde \varphi_e$.
In particular, this entails that the random variable $\mu_0(0)$ is invariant under $\{T_{se^\bot}\}_{s\in\R}$. As the strong mixing assumption implies directional ergodicity, we can conclude $\mu_0(0)=\bar \mu_0(0):=\expec{\mu_0(0)}$ almost surely. Similarly, $\mu_0(r)=\bar \mu_0(r):=\expec{\mu_0(r)}$ almost surely for all $r\in\R$.
Now, recalling~\eqref{eq:mu-redmu0} and the stationarity of $\mu$ and $\tilde\varphi_e$, we infer almost surely, for all $x\in\R^2\setminus\Ic$ and $z\in\R^2$,
\begin{eqnarray*}
\mu(x)&=&(T_{-z}\mu)(z+x)\\
&=&\bar \mu_0(z\cdot e+x\cdot e+(T_{-z}\tilde \varphi_e)(x+z))\\
&=&\bar \mu_0(z\cdot e+x\cdot e+\tilde \varphi_e(x)).
\end{eqnarray*}
By the arbitrariness of $z\in\R^2$, this implies that $\bar \mu_0$ is a constant function on $\R$, and hence~$\mu$ is a constant function on $\R^2\setminus\Ic$.
\end{proof}
\begin{rem}
Following the above proof, if we do not know whether a stationary corrector exists, we can still deduce that any almost sure stationary weak solution $\mu\in\Ld^2_\loc(\R^2;\Ld^2(\Omega))$ of equation~\eqref{eq:mu-invar} can be written as
\[\mu(x)=\mu_0(x\cdot e+\varphi_e(x)),\]
for some measurable map $\mu_0$ on $\R\times\Omega$, where $\varphi_e$ is the non-stationary corrector with anchoring $\varphi_e(0)=0$.
As corrector increments are always stationary (since $\nabla\varphi_e$ is stationary), we can deduce that $\mu_0$ satisfies almost surely, instead of~\eqref{eq:mu0-stat},
\[(T_z\mu_0)(0)\,=\,\mu_0(z\cdot e+\varphi_e(z)),\qquad\text{for all $z\in\R^2$.}\]
Yet, it is unclear to us how to exploit the resulting invariance of $\mu_0$ upon shifts orthogonally to the random curves $\{z\in\R^2:z\cdot e+\varphi_e(z)=r\}$, $r\in\R$.
\end{rem}

We do not know whether there are statistical assumptions that guarantee that the answer to Question~\ref{qu:unique} is  positive.
Assuming a positive answer to this question, the rest of our analysis still holds in the random setting and  leads to the following homogenization result.

\begin{theor}\label{th:imper-rand}
As above, let $\{I_n\}_n$ be a random collection of subsets of $\R^2$ such that the random set $\Ic=\bigcup_nI_n$ is stationary and strongly mixing, and such that for some $\rho>0$ it almost surely satisfies the same regularity and hardcore conditions as in Theorem~\ref{th:imper}.
For all $\e>0$, let $u_\e$ be the unique solution of the 2D Euler system~\eqref{eq:imperm-main} with $\e$-rescaled impermeable inclusions at $\Ic_\e=\e\Ic=\bigcup_n\e I_n$, and let the data of the problem converge as~$\e\downarrow0$ in the sense of~\eqref{eq:conv-data-imperm}.
Provided that the answer to Question~\ref{qu:unique} is positive, we have almost surely
\begin{equation}\label{eq:lim-omu-res}
\begin{array}{rcll}
\curl(u_\e)\mathds1_{\R^2\setminus\Ic_\e}&\cvfs&w,&\text{in $\Ld^\infty_\loc(\R_+;\Ld^1\cap\Ld^\infty(\R^2))$},\\
u_\e\mathds1_{\R^2\setminus\Ic_\e}&\cvfs& u,&\text{in $\Ld^\infty_\loc(\R_+;\Ld^2(\R^2))$},
\end{array}\end{equation}
where the limit $(w,u)$ is the unique global weak solution of the homogenized problem~\eqref{eq:homog-imperm},
where we now denote by $\lambda:=\E[\mathds1_\Ic]$ the inclusions' volume fraction
and where the coefficient~$\bm{\bar m}$ is given by~\eqref{eq:def-barm} in terms of the homogenized matrix $\bm{\bar a}\in\R^{2\times2}$ now defined in~\eqref{eq:def-bara-re}.
\end{theor}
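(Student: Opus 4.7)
\emph{Proof proposal.}
The plan is to follow step by step the argument used in the proof of Theorem~\ref{th:imper}, replacing each periodic tool by its random stationary-ergodic counterpart, and invoking the assumed positive answer to Question~\ref{qu:unique} exactly where the non-degeneracy condition~\eqref{eq:non-deg-re} enters in the periodic proof.

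First, I would establish almost sure compactness for $(u_\e,w_\e)$ with $w_\e:=\curl(u_\e)\mathds1_{\R^2\setminus\Ic_\e}$. The energy estimate for~\eqref{eq:imperm-main} and the $\Ld^1\cap\Ld^\infty$ control of $w_\e$ are purely deterministic and carry over verbatim, so that almost surely one extracts subsequential weak-$*$ limits $(w,u)$ as in~\eqref{eq:lim-omu-res}. For the div-curl identification, stationary correctors $\varphi_i$ solving~\eqref{eq:varphi-imperm} in the sense recalled before Question~\ref{qu:unique} are provided by standard stochastic homogenization of elliptic equations with stiff inclusions, cf.~\cite[Section~8.6]{JKO94}. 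An oscillating-test-function argument based on these random correctors, combined with compensated compactness applied to the pair $(u_\e\mathds1_{\R^2\setminus\Ic_\e},\,e_i+\nabla\varphi_i(\tfrac{\cdot}{\e}))$ and with the spatial ergodic theorem in place of averaging on the torus, then identifies the limit div-curl relation almost surely as $\curl(\bm{\bar m}u)=w$ with $\bm{\bar m}=J^T\bm{\bar a}J$ and $\bm{\bar a}$ given by~\eqref{eq:def-bara-re}, exactly as in the periodic setting.

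The crux is the passage to the limit in the vorticity transport equation. Following the strategy of Theorem~\ref{th:imper}, I would straighten the flow via the corrected coordinates $\Phi_\e:x\mapsto x+\e\varphi(\tfrac x\e)$, which by Theorem~\ref{th:diffeo-rigid} is almost surely a diffeomorphism from $\R^2\setminus\Ic_\e$ onto its image. For each direction $e\in\Sp^1$, testing the equation for $w_\e$ against a cut-off of the corrected stream function $x\mapsto x\cdot e+\e\varphi_e(\tfrac x\e)$ and exploiting the div-curl structure of $u_\e$ reduces the identification of the limit flux to characterizing all stationary random fields $\mu\in\Ld^2_\loc(\R^2;\Ld^2(\Omega))$ that are weakly invariant under the corrected velocity $(e+\nabla\varphi_e)^\bot$, i.e.\@ solve~\eqref{eq:mu-invar}. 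In the periodic case, this characterization rests on unique ergodicity of the torus translation in direction $e$ by Weyl's theorem, which is why rational directions must be excluded via~\eqref{eq:non-deg-re}. In the random setting, by contrast, the strong-mixing hypothesis delivers directional ergodicity in \emph{every} direction $e\in\Sp^1$, so that no restriction on $e$ is needed; what remains is precisely the statement that any such $\mu$ be almost surely constant on $\R^2\setminus\Ic$, which is the content of Question~\ref{qu:unique}. Granted a positive answer, the limit transport flux is identified almost surely as $\tfrac{1}{1-\lambda}wu$ with $\lambda=\E[\mathds1_\Ic]$, so that $(w,u)$ weakly solves~\eqref{eq:homog-imperm}. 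Well-posedness of this homogenized system, already established in the proof of Theorem~\ref{th:imper} and depending only on the almost sure ellipticity of $\bm{\bar m}$ from~\eqref{eq:def-bara-re}, then forces uniqueness of the limit point and promotes the convergences~\eqref{eq:lim-omu-res} to hold along the full sequence $\e\downarrow0$ almost surely.

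The main obstacle is, of course, Question~\ref{qu:unique} itself; once it is granted, the rest of the argument is a direct transcription of the periodic proof, the only additional care being the measurability and almost sure nature of each step. As Lemma~\ref{lem:unique} shows, a positive answer is available whenever $\sup_x\E[|\varphi(x)|^2]<\infty$, but since 2D correctors are generically unbounded even under strong mixing, proving Question~\ref{qu:unique} in full generality would seem to require genuinely new probabilistic input beyond what is used in the rest of our analysis.
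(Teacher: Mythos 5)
Your proposal has the right skeleton: compactness of $(w_\e,u_\e)$, stochastic homogenization of the div-curl problem with stiff inclusions yielding $\curl(\bm{\bar m}u)=w$ with $\bm{\bar a}$ as in~\eqref{eq:def-bara-re}, reduction of the transport step to Question~\ref{qu:unique}, and uniqueness of the homogenized system to pass from subsequential to full convergence. You also correctly explain why strong mixing removes the rational-direction restriction~\eqref{eq:non-deg-re} needed in the periodic proof. All of this matches the paper.

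However, the mechanism you propose for the transport step does not match the paper and is not clearly a working argument. The paper does \emph{not} ``straighten the flow via corrected coordinates'' nor test the vorticity equation against a cut-off of a single corrected stream function $x\mapsto x\cdot e+\e\varphi_e(\tfrac x\e)$. Instead, it proves a time-dependent stochastic two-scale compactness lemma (Lemma~\ref{lem:compact-2sc}), extracts a microstructure profile $W(t,x,\cdot)\in\Ld^2_\loc(\R^2;\Ld^2(\Omega))$ for the extended vorticity, tests the weak formulation of the transport equation against $\Psi(t,x)\Theta(\tfrac x\e)$ for \emph{arbitrary} smooth stationary $\Theta$, and after multiplying by $\e$ and passing to the limit using the corrector description of $\tilde u_\e$ from Proposition~\ref{prop:omu-lim}(ii), obtains the pointwise constraint $\Div(W(t,x,\cdot)R_{t,x})=0$ with $R_{t,x}=-(e_i+\nabla\varphi_i)^\bot(u(t,x))_i^\bot$; Question~\ref{qu:unique} is then applied at each $(t,x)$ with the direction $e=u(t,x)/|u(t,x)|$, which \emph{varies} with the macroscopic point and is a priori unknown. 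Your proposal fixes $e\in\Sp^1$ and tests with a single corrected stream function, which does not obviously yield the $(t,x)$-pointwise invariance equation for the two-scale profile; indeed, without two-scale compactness you do not even have the object $W$ whose invariance under $(e+\nabla\varphi_e)^\bot$ is what Question~\ref{qu:unique} is about. You also invoke Theorem~\ref{th:diffeo-rigid} directly in this proof, but the paper's argument for Theorem~\ref{th:imper-rand} does not use it here: it is entirely subsumed into the assumed positive answer to Question~\ref{qu:unique} (it appears in the \emph{periodic} proof, where Question~\ref{qu:unique} is replaced by unique ergodicity of incompressible periodic flows without fixed points, and in Lemma~\ref{lem:unique}). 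The missing idea in your proposal, then, is precisely the two-scale compactness machinery and the identification of the $\e^{-1}$ term in the weak formulation, which is what produces the invariant-measure constraint to which Question~\ref{qu:unique} applies.
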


\section{Homogenization of impermeable inclusions}\label{sec:homog-imper}
This section is devoted to the proof of Theorems~\ref{th:imper} and~\ref{th:imper-rand}.
Since the periodic setting can be viewed as a particular case of the stationary random setting of Section~\ref{sec:random} with $\Omega=Q=[-\frac12,\frac12)^2$ the periodicity cell, we focus on the latter. Let $\{I_n\}_n$ be a collection of random subsets of~$\R^2$ such that the random set $\Ic=\bigcup_nI_n$ is stationary and strongly mixing, and such that for some $\rho>0$ it almost surely satisfies the same regularity and hardcore conditions as in Theorem~\ref{th:imper}.
With the assumptions on the data $u_\e^\circ,f_\e$, cf.~\eqref{eq:conv-data-imperm}, almost surely, the 2D Euler system~\eqref{eq:imperm-main} admits a unique global weak solution $u_\e\in\Ld^\infty_{\loc}(\R_+;\Ld^2(\R^2\setminus\Ic_\e)^2)$ with vorticity $w_\e:=\curl(u_\e)\in\Ld^\infty_{\loc}(\R_+;\Ld^1\cap\Ld^\infty(\R^2\setminus\Ic_\e))$, cf.~\cite{Yudovich-63,Kikuchi,Majda-Bertozzi}. In addition, in terms of $w^\circ_\e:=\curl(u_\e^\circ)$ and $g_\e:=\curl(f_\e)$, the Euler equations may alternatively be written in vorticity formulation,
\begin{equation}\label{eq:Euler-vort}
\left\{\begin{array}{ll}
\partial_tw_\e+\Div(u_\e w_\e)=g_\e,&\text{in $\R^2\setminus \Ic_\e$},\\
\Div(u_\e)=0,&\text{in $\R^2\setminus\Ic_\e$},\\
\curl(u_\e)=w_\e,&\text{in $\R^2\setminus\Ic_\e$},\\
u_\e\cdot\nu=0,&\text{on $\partial \Ic_\e$},\\
\int_{\e\partial I_n}u_\e\cdot\tau=0,&\forall n,\\
w_\e|_{t=0}=w^\circ_\e,&
\end{array}\right.
\end{equation}
and we have the following a priori estimates, for all $t\ge0$,
\begin{equation}\label{eq:bnd-omueps-apriori}
\begin{array}{rcl}
\|w_\e(t)\|_{\Ld^1\cap\Ld^\infty(\R^2\setminus\Ic_\e)}&\le&\|w_\e^\circ\|_{\Ld^1\cap\Ld^\infty(\R^2\setminus\Ic_\e)}+\|g_\e\|_{\Ld^1(0,t;\Ld^1\cap\Ld^\infty(\R^2\setminus\Ic_\e))},\\[2mm]
\|u_\e(t)\|_{\Ld^2(\R^2\setminus\Ic_\e)}&\le&\|u_\e^\circ\|_{\Ld^2(\R^2\setminus\Ic_\e)}+\|f_\e\|_{\Ld^1(0,t;\Ld^2(\R^2\setminus\Ic_\e))},
\end{array}
\end{equation}
where the right-hand sides are uniformly bounded as $\e\downarrow0$ by assumption~\eqref{eq:conv-data-imperm}.
From the div-curl problem and the impermeability condition in~\eqref{eq:Euler-vort}, it is easily checked that $w_\e\mathds1_{\R^2\setminus\Ic_\e}$ is also bounded in $\Ld^\infty_\loc(\R_+;\dot H^{-1}(\R^2))$ (e.g.\@ using the trace estimate of Lemma~\ref{lem:trace} below).
By weak compactness, in terms of the extensions
\[\tilde w_\e\,:=\,w_\e\mathds1_{\R^2\setminus\Ic_\e},\qquad\tilde u_\e\,:=\,u_\e\mathds1_{\R^2\setminus\Ic_\e},\]
we deduce almost surely, up to a subsequence,
\begin{equation}\label{eq:conv-omu-extr}
\begin{array}{rcll}
\tilde w_\e&\cvfs&w,&\text{in $\Ld^\infty_\loc(\R_+;\Ld^1\cap\Ld^\infty\cap\dot H^{-1}(\R^2))$},\\
\tilde u_\e&\cvfs& u,&\text{in $\Ld^\infty_\loc(\R_+;\Ld^2(\R^2))$},
\end{array}\end{equation}
for some limit $(w,u)$, which remains to be characterized.
We split the proof into two main steps, first analyzing the div-curl problem defining $\tilde u_\e$, characterizing its limit $u$ and the oscillatory behavior of $\tilde u_\e$, before turning to the homogenization of the transport equation for the vorticity $\tilde w_\e$.

\subsection{Homogenization of the div-curl problem}
We start with the homogenization of the underlying div-curl problem for $u_\e$ in~\eqref{eq:Euler-vort}, which is equivalent to a scalar elliptic problem with stiff inclusions for the stream function (see~\eqref{eq:divcurl} below). Homogenization for this type of elliptic problem is classical, see e.g.~\cite[Section~8.6]{JKO94}. By means of a corrector-type result, we further manage to describe the oscillations of the fluid velocity~$u_\e$ in a strong $\Ld^2$ sense.
Note that the oscillations of $u_\e$ are independent of those of the underlying vorticity~$w_\e$ as a consequence of the compactness of the map~$w_\e\mapsto u_\e$ on~$\Ld^2(\R^2\setminus\Ic_\e)$.

\begin{prop}\label{prop:omu-lim}
Almost surely, the extracted limit point $(w,u)$ in~\eqref{eq:conv-omu-extr} satisfies the following properties.
\begin{enumerate}[(i)]
\item \emph{Homogenized div-curl problem:}
\begin{equation}\label{eq:lim-divcurl}
\left\{\begin{array}{rcll}
\Div(u)&=&0,&\text{in $\R^2$},\\[1mm]
\curl(\bm{\bar m} u)&=&w,&\text{in $\R^2$},
\end{array}\right.\end{equation}
where $\bm{\bar m}\in\R^{2\times 2}$ is the symmetric matrix defined in~\eqref{eq:def-barm} and~\eqref{eq:def-bara-re}. In particular, this implies additional regularity: the limit $u$ necessarily belongs to $\Ld^\infty_\loc(\R_+;W^{1,p}\cap W^{r,\infty}(\R^2)^2)$ for all $2\le p<\infty$ and $r<1$.
\smallskip\item \emph{Corrector result:} along the extracted subsequence, we have the strong convergence
\begin{equation}\label{eq:corr-res-veps}
\tilde u_\e+(e_i+\nabla\varphi_i)^\bot(\tfrac\cdot\e)\,u^\bot_i~\to~0,\qquad\text{in $\Ld^\infty_\loc(\R_+;\Ld^2_{\loc}(\R^2))$,}
\end{equation}
where for all $1\le i\le2$ the corrector $\varphi_i\in H^1_\loc(\R^2;\Ld^2(\Omega))$ is defined in~\eqref{eq:def-bara-re}.
\end{enumerate}
\end{prop}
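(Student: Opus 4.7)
The strategy is to reformulate the div-curl problem for $u_\e$ as a scalar elliptic problem with stiff inclusions by introducing a stream function. Since $\Div(u_\e)=0$ in $\R^2\setminus\Ic_\e$ and $u_\e\cdot\nu=0$ on $\partial\Ic_\e$, there exists $\psi_\e\in\dot H^1(\R^2)$ with $\nabla^\bot\psi_\e=u_\e$ outside the inclusions; the impermeability condition forces $\psi_\e$ to be constant on each $\e\partial I_n$, and so $\psi_\e$ extends as a constant inside each $\e I_n$, giving $\nabla\psi_\e=0$ on $\Ic_\e$. Using $\curl(\nabla^\bot\psi_\e)=\triangle\psi_\e$ together with the boundary identity $\partial_\nu\psi_\e=u_\e\cdot\tau$ on $\partial\Ic_\e$, the vanishing-circulation conditions $\int_{\e\partial I_n}u_\e\cdot\tau=0$ exactly translate into the flux conditions $\int_{\e\partial I_n}\partial_\nu\psi_\e=0$. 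Altogether, $\psi_\e$ solves the scalar stiff-inclusion elliptic problem
\[\triangle\psi_\e=\tilde w_\e~\text{in $\R^2\setminus\Ic_\e$},\qquad \nabla\psi_\e=0~\text{in $\Ic_\e$},\qquad \textstyle\int_{\e\partial I_n}\partial_\nu\psi_\e=0~~\forall n,\]
which is exactly the $\e$-rescaled version of the corrector problem~\eqref{eq:varphi-imperm} with source $\tilde w_\e$, a setting treated classically in~\cite[Section~8.6]{JKO94}.

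At this point I would invoke the standard stochastic homogenization theory for elliptic equations with stiff inclusions. The hardcore and regularity assumptions provide uniform extension operators and Poincar\'e inequalities in a $\rho$-neighborhood of each inclusion, while stationarity and strong mixing of $\Ic$ yield ergodicity and well-posedness of the corrector problem~\eqref{eq:varphi-imperm} as recalled in Section~\ref{sec:random}. Combining a div-curl argument with the ergodic theorem (applied to the stationary field $\nabla\varphi_i$) and the standard two-scale expansion then produces simultaneously the homogenized equation $\Div(\bm{\bar a}\nabla\psi)=w$ for the limit stream function $\psi$ of $\psi_\e$, and the strong corrector convergence
\[\nabla\psi_\e-\sum_i\bigl(e_i+\nabla\varphi_i(\tfrac\cdot\e)\bigr)\partial_i\psi~\longrightarrow~0\qquad\text{in $\Ld^\infty_\loc(\R_+;\Ld^2_\loc(\R^2))$.}\]

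It then remains to translate the result back from $\psi$ to $u$. Writing $u=\nabla^\bot\psi=J\nabla\psi$ gives $\Div(u)=0$ automatically, and the pointwise identity $\curl(Av)=\Div(J^TAv)$ together with $\nabla\psi=J^Tu$ rewrites $\Div(\bm{\bar a}\nabla\psi)=w$ as $\curl(\bm{\bar m}u)=w$ with $\bm{\bar m}=J^T\bm{\bar a}J$, which is the announced system~\eqref{eq:lim-divcurl}. The additional regularity $u\in\Ld^\infty_\loc(\R_+;W^{1,p}\cap W^{r,\infty})$ then follows from Calder\'on--Zygmund and the classical 2D log-Lipschitz estimates applied to this constant-coefficient div-curl system with source $w\in\Ld^1\cap\Ld^\infty$. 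Finally, applying $J$ to the corrector identity for $\nabla\psi_\e$ above and using the relation $\partial_i\psi=-u^\bot_i$ (which is just the componentwise form of $\nabla\psi=-u^\bot$) directly converts the stream-function result into the announced corrector identity~\eqref{eq:corr-res-veps} for $u_\e$. The main obstacle I anticipate is to carry out the stiff-inclusion homogenization globally on the whole plane $\R^2$ (rather than on a bounded domain as in the textbook treatment) and uniformly in $\e$ and $t$ with only $\Ld^\infty_\loc(\R_+;\dot H^{-1})$ control on the source~$w_\e$; once these technical points are handled, the core of the argument reduces to classical div-curl and ergodic tools.
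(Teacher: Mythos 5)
Your approach mirrors the paper's own: reduce the div-curl problem to a scalar stiff-inclusion elliptic problem via a stream function, invoke stochastic homogenization to get the homogenized equation and a corrector result for $\nabla s_\e$, then translate back to $u$ via the rotation $J$. The algebra leading to $\bm{\bar m}=J^T\bm{\bar a}J$ and to the form of~\eqref{eq:corr-res-veps} is correct. However, the ``main obstacle'' you flag at the end --- obtaining the corrector convergence uniformly in $t$ with a source that only converges weakly --- is not a technicality that routine tools resolve, and this is precisely where the paper's proof departs from textbook elliptic homogenization.

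Concretely, to prove the strong corrector convergence one must show the energy $\int\chi_R|\nabla s_\e|^2$ converges to $\int\chi_R\nabla s\cdot\bm{\bar a}\nabla s$; after integration by parts this requires passing to the limit in products such as $\int\chi_R\bigl(s_\e-\fint_{B_R}s_\e\bigr)\tilde w_\e$, where both factors are only weakly convergent in time. In the spatial variables, Rellich and sublinearity save the day, but in the time variable there is no Rellich, and the source $\tilde w_\e$ is not fixed. The paper resolves this by exploiting the coupling with the Euler dynamics: the vorticity transport equation gives $\partial_t\tilde w_\e$ bounded in $\Ld^\infty_\loc(\R_+;H^{-1})$, hence $\tilde w_\e\to w$ strongly in $C_\loc(\R_+;H^{-1}_\loc)$ by Aubin--Lions; and differentiating the elliptic problem in time together with the transport equation yields $\partial_ts_\e$ bounded in $\Ld^\infty_\loc(\R_+;\dot H^1)$, hence by Aubin--Simon $s_\e-\fint_{B_R}s_\e\to S_R$ strongly in $C_\loc(\R_+;\Ld^2(B_R))$. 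Your proposal treats the div-curl problem as decoupled from the transport equation, so this input is simply unavailable, and without it the energy convergence (and hence part (ii)) does not follow. A second gap: the stiff inclusions produce boundary terms $\sum_n\int_{\e\partial I_n}(\cdots)\,\partial_\nu s_\e$ in the compensated-compactness computation which are not controlled by generic extension/Poincar\'e arguments; the paper closes them with the dedicated trace estimate of Lemma~\ref{lem:trace}, which you would need to state and prove (or replace by an equivalent device) to make the corrector step rigorous.
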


\begin{proof}
Due to the impermeability condition $u_\e\cdot\nu=0$ on $\partial\Ic_\e$, we find that the extended fluid velocity $\tilde u_\e$ actually satisfies
\[\Div(\tilde u_\e)=0,\qquad\text{in $\R^2$}.\]
Hence, there exists a stream function $s_\e\in\Ld^\infty_\loc(\R_+;\dot H^1(\R^2))$ such that
\begin{equation}\label{eq:rel-u/s}
\tilde u_\e\,=\,\nabla^\bot s_\e,
\end{equation}
and the div-curl problem for $u_\e$ in~\eqref{eq:Euler-vort} then takes form of the following elliptic problem for~$s_\e$,
\begin{equation}\label{eq:divcurl}
\left\{\begin{array}{ll}
\triangle s_\e=w_\e,&\text{in $\R^2\setminus\Ic_\e$},\\
\nabla s_\e=0,&\text{in $\Ic_\e$},\\
\int_{\e\partial I_n}\partial_\nu s_\e=0,&\forall n.
\end{array}\right.
\end{equation}
In other words, the stream function $s_\e$ satisfies an elliptic problem with so-called stiff inclusions at~$\Ic_\e$.
From there, we split the proof into three steps.

\medskip
\step1 Proof of~(i): qualitative homogenization.\\
By the standard qualitative theory of stochastic homogenization for stiff inclusions, e.g.~\cite[Section~8.6]{JKO94}, almost surely, given the weak $\Ld^2$ convergence~\eqref{eq:conv-omu-extr} of the vorticity, we have for the solution $s_\e$ of~\eqref{eq:divcurl},
\begin{equation}\label{eq:qual-homog-stiff}
\nabla s_\e\cvfs\nabla s,\qquad\text{in~$\Ld^\infty_\loc(\R_+;\Ld^2(\R^2))$,}
\end{equation}
where the limit is the unique solution of
\begin{equation}\label{eq:homog-eqn}
\Div(\bm{\bar a}\nabla s)=w,\qquad\text{in $\R^2$},
\end{equation}
where $\bm{\bar a}$ is the homogenized matrix defined in~\eqref{eq:def-bara-re}.
Recalling~\eqref{eq:conv-omu-extr}, \eqref{eq:rel-u/s}, as well as the relation~\eqref{eq:def-barm} between $\bm{\bar a}$ and $\bm{\bar m}$, we deduce that $u=\nabla^\bot s$ satisfies the homogenized div-curl problem~\eqref{eq:lim-divcurl}.
Finally, as the limit vorticity $w$ belongs to $\Ld^\infty_\loc(\R_+;\Ld^1\cap\Ld^\infty\cap\dot H^{-1}(\R^2))$, the stated additional regularity for $u=\nabla^\bot s$ follows from equation~\eqref{eq:homog-eqn} using the Riesz and Calder\'on--Zygmund theories.

\medskip
\step2 Time compactness.\\
Before turning to the proof of the corrector result, some care is needed due to time compactness issues that prevent the direct application of standard homogenization techniques. Consider the following extensions of the data,
\[\tilde w_\e^\circ=w_\e^\circ\mathds1_{\R^2\setminus\Ic_\e},\qquad\tilde g_\e:=g_\e\mathds1_{\R^2\setminus\Ic_\e},\qquad \tilde f_\e:=f_\e\mathds1_{\R^2\setminus\Ic_\e},\]
and note that the assumption $f_\e\cdot\tau=0$ on $\partial\Ic_\e$ ensures
\begin{equation}\label{eq:link-fg}
\tilde g_\e=\curl(\tilde f_\e),\qquad\text{in $\R^2$}.
\end{equation}
In these terms, due to the impermeability condition $u_\e\cdot\nu=0$ on $\partial\Ic_\e$, we note that the transport equation for the vorticity in~\eqref{eq:Euler-vort} actually takes on the following form for the extended vorticity $\tilde w_\e$,
\begin{equation}\label{eq:vort-tildeom-tsp}
\partial_t\tilde w_\e+\Div(\tilde u_\e\tilde w_\e)=\tilde g_\e,\qquad\text{in $\R^2$}.
\end{equation}
Using the boundedness properties for $\tilde w_\e$ and~$\tilde u_\e$, cf.~\eqref{eq:bnd-omueps-apriori}, the weak formulation for this equation entails that $\{\partial_t\tilde w_\e\}_\e$ is bounded in $\Ld^\infty_\loc(\R_+; H^{-1}(\R^2))$.
As in addition $\{\tilde w_\e\}_\e$ is bounded in $\Ld^\infty_\loc(\R_+;\Ld^1\cap\Ld^\infty(\R^2))$, we deduce from the Aubin--Lions lemma that it is precompact in $C_\loc(\R_+;H^{-1}_\loc(\R^2))$. Hence, we get along the extracted subsequence,
\begin{equation}\label{eq:strongconv-om-imp}
\tilde w_\e\,\to\, w,\qquad\text{in $C_\loc(\R_+;H^{-1}_\loc(\R^2))$}.
\end{equation}
Next, from equation~\eqref{eq:divcurl}, after taking the time derivative, we find
\[\int_{\R^2}|\nabla\partial_ts_\e|^2\,=\,-\int_{\R^2}(\partial_ts_\e)(\partial_t\tilde w_\e),\]
which entails, using~\eqref{eq:vort-tildeom-tsp} and~\eqref{eq:link-fg},
\[\|\nabla\partial_ts_\e\|_{\Ld^2(\R^2)}\,\le\,\|f_\e\|_{\Ld^2(\R^2)}+\|\tilde u_\e\|_{\Ld^2(\R^2)}\|\tilde w_\e\|_{\Ld^\infty(\R^2)},\]
thus proving that $\{\partial_ts_\e\}_\e$ is bounded in $\Ld^\infty_\loc(\R_+;\dot H^1(\R^2))$. From the weak convergence~\eqref{eq:qual-homog-stiff} combined  to the Poincar\'e inequality, the Rellich theorem, and the Aubin--Simon lemma, we similarly deduce along the extracted subsequence, for all $R>0$,
\begin{equation}\label{eq:strongconv-s-imper}
s_\e-\fint_{B_R}s_\e\,\to\,S_R,\qquad\text{in $C_\loc(\R_+;\Ld^2(B_R))$},
\end{equation}
for some limit $S_R$ with
\begin{equation}\label{eq:nabSRs}
\nabla S_R=\nabla s,\qquad\text{in $B_R$.}
\end{equation}
Noting that the regularity in Step~1 entails $s\in\Ld^\infty_\loc(\R_+;\dot H^{\delta}(\R^2))$ for all $0<\delta\le1$, we find
\[\|S_R\|_{\Ld^2(B_R)}\,\lesssim\,R^\delta\|\nabla^\delta S_R\|_{\Ld^2(B_R)}\,\le\,R^\delta\|\nabla^\delta s\|_{\Ld^2(\R^2)},\]
and thus
\begin{equation}\label{eq:subliner-SR}
\lim_{R\uparrow\infty}R^{-1}\|S_R\|_{\Ld^2(B_R)}\,=\,0.
\end{equation}

\medskip
\step3 Proof of~(ii): corrector result.\\
Given $T>0$, for all $R>0$, consider a cut-off function $\chi_R\in C^\infty_c(\R_+\times\R^2)$ supported in $[0,T]\times B_R$ with $\|\nabla\chi_R\|_{\Ld^\infty(\R_+\times\R^2)}\lesssim R^{-1}$.
We can decompose
\begin{multline}
\int_{\R_+\times\R^2}\chi_R\big|\nabla s_\e-(e_i+\nabla\varphi_i)(\tfrac\cdot\e)\nabla_is\big|^2\\
\,=\,\int_{\R_+\times\R^2}\chi_R|\nabla s_\e|^2-2\int_{\R_+\times\R^2}\chi_R(\nabla_is)\nabla s_\e\cdot (e_i+\nabla\varphi_i)(\tfrac\cdot\e)\\
+\int_{\R_+\times\R^2}\chi_R(\nabla_is)(\nabla_js)\big((e_i+\nabla\varphi_i)\cdot(e_j+\nabla\varphi_j)\big)(\tfrac\cdot\e).
\label{eq:decomp-corre-approx}
\end{multline}
We shall pass to the limit separately in each right-hand side term by means of compensated compactness. We split the proof into four further substeps.

\medskip
\substep{3.1} Proof that
\begin{equation}\label{eq:lim-first-term-res}
\lim_{R\uparrow\infty}\limsup_{\e\downarrow0}\bigg|\int_{\R_+\times\R^2}\chi_R|\nabla s_\e|^2-\int_{\R_+\times\R^2}\nabla s\cdot \bm{\bar a}\nabla s\bigg|\,=\,0.
\end{equation}
Integrating by parts and using~\eqref{eq:divcurl}, we find
\begin{multline*}
\int_{\R_+\times\R^2}\chi_R|\nabla s_\e|^2\,=\,-\int_{\R_+\times\R^2}\Big(s_\e-\fint_{B_R}s_\e\Big)\nabla\chi_R\cdot\nabla s_\e-\int_{\R_+\times\R^2}\Big(s_\e-\fint_{B_R}s_\e\Big)\chi_R\tilde w_\e\\
-\sum_n\int_{\R_+}\bigg(\fint_{\e I_n}\Big(s_\e-\fint_{B_R}s_\e\Big)\bigg) \int_{\e\partial I_n}\chi_R \partial_\nu s_\e.
\end{multline*}
Using~\eqref{eq:conv-omu-extr}, \eqref{eq:qual-homog-stiff}, and~\eqref{eq:strongconv-s-imper} to pass to the limit in the first two right-hand side terms, and using the trace estimate of Lemma~\ref{lem:trace} to estimate the last term, we get
\begin{multline*}
\limsup_{\e\downarrow0}\bigg|\int_{\R_+\times\R^2}\chi_R|\nabla s_\e|^2+\int_{\R_+\times\R^2}S_R\nabla\chi_R\cdot\nabla s+\int_{\R_+\times\R^2}\chi_RS_R w\bigg|\\
\,\lesssim\,\|\nabla\chi_R\|_{\Ld^\infty(\R^2)}\int_{0}^T\|S_R\|_{\Ld^2(B_{R})},
\end{multline*}
where we used that $\nabla s_\e$ and $\tilde w_\e$ are bounded in $\Ld^\infty_\loc(\R_+;\Ld^2(\R^2))$.
Rewriting the third left-hand side term using the homogenized equation~\eqref{eq:homog-eqn} and  $\nabla S_R=\nabla s$, we have
\[
\int_{\R_+\times\R^2}\chi_RS_R w\,=\,-\int_{\R_+\times\R^2}\chi_R\nabla s \cdot \bar a \nabla s-\int_{\R_+\times\R^2}S_R \nabla \chi_R \cdot \bar a \nabla s,
\]
this entails
\begin{equation*}
\limsup_{\e\downarrow0}\bigg|\int_{\R_+\times\R^2}\chi_R|\nabla s_\e|^2-\int_{\R_+\times\R^2}\chi_R\nabla s\cdot \bm{\bar a}\nabla s\bigg|
\,\lesssim\,\|\nabla\chi_R\|_{\Ld^\infty(\R^2)}\int_{0}^T\|S_R\|_{\Ld^2(B_{R})}.
\end{equation*}
By~\eqref{eq:subliner-SR}, the right-hand side tends to $0$ as $R\uparrow\infty$ and the claim~\eqref{eq:lim-first-term-res} follows.

\medskip
\substep{3.2} Proof that
\begin{equation}\label{eq:lim-cross-term-res}
\lim_{\e\downarrow0}\int_{\R_+\times\R^2}\chi_R(\nabla_is)\nabla s_\e\cdot (e_i+\nabla\varphi_i)(\tfrac\cdot\e)
\,=\,\int_{\R_+\times\R^2}\chi_R\nabla s\cdot\bm{\bar a}\nabla s.
\end{equation}
We start with a regularization argument for $s$: given $\eta>0$, let $s^\eta\in \Ld^\infty_\loc(\R_+;C^\infty_b(\R^2))$ with
\begin{equation}\label{eq:approx-s-seta-00}
\|\nabla s-\nabla s^\eta\|_{\Ld^\infty(0,T;\Ld^2(\R^2))}\le\eta.
\end{equation}
We may then estimate
\begin{multline*}
\bigg|\int_{\R_+\times\R^2}\chi_R(\nabla_is)\nabla s_\e\cdot (e_i+\nabla\varphi_i)(\tfrac\cdot\e)-\int_{\R_+\times\R^2}\chi_R(\nabla_is^\eta)\nabla s_\e\cdot (e_i+\nabla\varphi_i)(\tfrac\cdot\e)\bigg|\\
\,\le\,\|\nabla s_\e\|_{\Ld^\infty(0,T;\Ld^2(\R^2))}\|(e_i+\nabla\varphi_i)(\tfrac\cdot\e)(\nabla_is-\nabla_is^\eta)\|_{\Ld^\infty(0,T;\Ld^2(\R^2))},
\end{multline*}
and thus, by~\eqref{eq:qual-homog-stiff}, the stationarity of $\nabla\varphi$, and the ergodic theorem,
\begin{multline}\label{eq:approx-s-seta}
\limsup_{\e\downarrow0}\bigg|\int_{\R_+\times\R^2}\chi_R(\nabla_is)\nabla s_\e\cdot (e_i+\nabla\varphi_i)(\tfrac\cdot\e)-\int_{\R_+\times\R^2}\chi_R(\nabla_is^\eta)\nabla s_\e\cdot (e_i+\nabla\varphi_i)(\tfrac\cdot\e)\bigg|\\
\,\lesssim\,\|\nabla s-\nabla s^\eta\|_{\Ld^\infty(0,T;\Ld^2(\R^2))}\,\le\,\eta.
\end{multline}
We now focus on the term of interest with $s$ replaced by $s^\eta$.
Integrating by parts, using the corrector equation~\eqref{eq:varphi-imperm}, and the stiff boundary conditions for $s_\e$, we find
\begin{multline}\label{eq:mixed-term-cor}
\int_{\R_+\times\R^2}\chi_R(\nabla_is^\eta)\nabla s_\e\cdot (e_i+\nabla\varphi_i)(\tfrac\cdot\e)\\
\,=\,-\int_{\R_+\times\R^2}\Big(s_\e-\fint_{B_R}s_\e\Big)\nabla(\chi_R\nabla_is^\eta) \cdot (e_i+\nabla\varphi_i)(\tfrac\cdot\e)\\
-\sum_n\int_{\R_+}\bigg(\fint_{\e I_n}\Big(s_\e-\fint_{B_R}s_\e\Big)\bigg)\\
\times\bigg(\int_{\e\partial I_n}\Big(\chi_R\nabla_is^\eta-\fint_{\e I_n}\chi_R\nabla_is^\eta\Big) (e_i+\nabla\varphi_i)(\tfrac\cdot\e)\cdot\nu\bigg).
\end{multline}
As $s^\eta$ is smooth, we can approximate
\[\bigg\|\Big(\chi_R\nabla_is^\eta-\fint_{\e I_n}\chi_R\nabla_is^\eta\Big)-(\cdot-\e x_n)_j\Big(\fint_{\e I_n}\nabla_j(\chi_R\nabla_is^\eta)\Big)\bigg\|_{\Ld^\infty(0,T;W^{1,\infty}(\e I_n))}\,\le\,C_\eta \e,\]
where we have set $x_n:=\fint_{I_n}x\,dx$.
Using this to reformulate the last right-hand term of~\eqref{eq:mixed-term-cor}, and using the trace estimate of Lemma~\ref{lem:trace} to estimate the error, we get
\begin{multline*}
\bigg|\int_{\R_+\times\R^2}\chi_R(\nabla_is^\eta)\nabla s_\e\cdot (e_i+\nabla\varphi_i)(\tfrac\cdot\e)\\
+\int_{\R_+\times\R^2}\Big(s_\e-\fint_{B_R}s_\e\Big)\nabla(\chi_R\nabla_is^\eta) \cdot (e_i+\nabla\varphi_i)(\tfrac\cdot\e)\\
+\int_{\R_+\times\R^2}\Big(s_\e-\fint_{B_R}s_\e\Big)\nabla_j(\chi_R\nabla_is^\eta)\bigg(\sum_n\frac{\mathds1_{I_n}(\tfrac\cdot\e)}{|I_n|}\int_{\partial I_n}(\cdot- x_n)_j(e_i+\nabla\varphi_i)\cdot\nu\bigg)\bigg|\\
\,\lesssim\,C_\eta\e\sum_n\int_{0}^T\Big\|s_\e-\fint_{B_R}s_\e\Big\|_{\Ld^2(B_{R+1})}\|e_i+\nabla\varphi_i(\tfrac\cdot\e)\|_{\Ld^2(B_{R+1})}.
\end{multline*}
Using~\eqref{eq:qual-homog-stiff}, \eqref{eq:strongconv-s-imper}, and the ergodic theorem to pass to the limit in the different terms, we are led to
\begin{equation}\label{eq:lim-cross-term-corr}
\lim_{\e\downarrow0}\int_{\R_+\times\R^2}\chi_R(\nabla_is^\eta)\nabla s_\e\cdot (e_i+\nabla\varphi_i)(\tfrac\cdot\e)
\,=\,-\int_{\R_+\times\R^2}S_R\,\Div(\chi_R\bm{\bar c}\nabla s^\eta),
\end{equation}
where we have defined
\[\bm{\bar c}_{ij}\,:=\,\delta_{ij}+\E\bigg[\sum_n\frac{\mathds1_{I_n}}{|I_n|}\int_{\partial I_n}(\cdot- x_n)_i(e_j+\nabla\varphi_j)\cdot\nu\bigg].\]
Using the corrector equations~\eqref{eq:varphi-imperm}, a direct computation yields
\begin{eqnarray*}
\bm{\bar c}_{ij}&=&\delta_{ij}+\delta_{ij}\E[\mathds1_\Ic]-\E\bigg[\sum_n\frac{\mathds1_{I_n}}{|I_n|}\int_{\partial I_n}\varphi_i\partial_\nu\varphi_j\bigg]\\
&=&\delta_{ij}+\delta_{ij}\E[\mathds1_\Ic]+\E\big[\mathds1_{\R^2\setminus\Ic}\nabla\varphi_i\cdot\nabla\varphi_j\big]\\
&=&\delta_{ij}+\E\big[\nabla\varphi_i\cdot\nabla\varphi_j\big],
\end{eqnarray*}
that is, by definition~\eqref{eq:def-bara-re}, $\bm{\bar c}=\bm{\bar a}$.
Inserting this into~\eqref{eq:lim-cross-term-corr} and integrating by parts, we get
\begin{equation*}
\lim_{\e\downarrow0}\int_{\R_+\times\R^2}\chi_R(\nabla_is^\eta)\nabla s_\e\cdot (e_i+\nabla\varphi_i)(\tfrac\cdot\e)
\,=\,\int_{\R_+\times\R^2}\chi_R\nabla s\cdot\bm{\bar a}\nabla s^\eta.
\end{equation*}
Using~\eqref{eq:approx-s-seta-00} and~\eqref{eq:approx-s-seta} to get rid of the approximation, the claim~\eqref{eq:lim-cross-term-res} follows.

\medskip
\substep{3.3} Conclusion.\\
By the ergodic theorem, recalling~\eqref{eq:def-bara-re} and $\nabla s\in\Ld^\infty_\loc(\R_+;\Ld^2\cap\Ld^\infty(\R^2))$, we find almost surely
\[\lim_{\e\downarrow0}\int_{\R_+\times\R^2}\chi_R(\nabla_is)(\nabla_js)\big((e_i+\nabla\varphi_i)\cdot(e_j+\nabla\varphi_j)\big)(\tfrac\cdot\e)\,=\,\int_{\R_+\times\R^2}\chi_R\nabla s\cdot\bm{\bar a}\nabla s.\]
Inserting this together with~\eqref{eq:lim-first-term-res} and~\eqref{eq:lim-cross-term-res} into~\eqref{eq:decomp-corre-approx}, we deduce
\[\lim_{R\uparrow\infty}\lim_{\e\downarrow0}\int_{\R_+\times\R^2}\chi_R\big|\nabla s_\e-(e_i+\nabla\varphi_i)(\tfrac\cdot\e)\nabla_is\big|^2\,=\,0.\]
Recalling $\tilde u_\e=\nabla^\bot s_\e$ and $u=\nabla^\bot s$, the conclusion~\eqref{eq:corr-res-veps} follows.
\end{proof}

In the above, we have used the following convenient trace estimate.

\begin{lem}[Trace estimate]\label{lem:trace}
Let $s\in H^1((I_n+\rho B)\setminus I_n)$ satisfy
\[\left\{\begin{array}{ll}
\triangle s= w,&\text{in $(I_n+\rho B)\setminus I_n$},\\
\int_{\partial I_n}\partial_\nu s=0.&\\
\end{array}\right.\]
Then for all $\chi\in H^1(I_n)$,
\[\Big|\int_{\partial I_n}\chi\partial_\nu s\Big|\,\lesssim\,\|\nabla\chi\|_{\Ld^2(I_n)}\Big(\|\nabla s\|_{\Ld^2((I_n+\rho B)\setminus I_n)}+\| w\|_{\Ld^2((I_n+\rho B)\setminus I_n)}\Big).\]
\end{lem}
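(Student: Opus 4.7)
The plan is to exploit the zero-flux condition $\int_{\partial I_n}\partial_\nu s=0$ to subtract a constant from $\chi$, and then to extend the resulting centered function to the surrounding annulus so that Green's identity applies.

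Concretely, I would first set $c:=\fint_{I_n}\chi$, so that by the Poincar\'e inequality on $I_n$ one has $\|\chi-c\|_{\Ld^2(I_n)}\lesssim\|\nabla\chi\|_{\Ld^2(I_n)}$, while the flux condition gives $\int_{\partial I_n}\chi\,\partial_\nu s=\int_{\partial I_n}(\chi-c)\partial_\nu s$. Next, I would extend $\chi-c$ from $I_n$ to $I_n+\rho B$ via a bounded Sobolev extension operator and multiply by a cut-off $\eta\in C^\infty_c(I_n+\rho B)$ with $\eta\equiv1$ on $I_n$ and $|\nabla\eta|\lesssim 1/\rho$. Call the result $\tilde\chi$. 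By construction,
\[\|\tilde\chi\|_{H^1((I_n+\rho B)\setminus I_n)}\,\lesssim\,\|\nabla\chi\|_{\Ld^2(I_n)},\]
with a constant depending only on $\rho$.

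The key step is then Green's identity on the annulus $A:=(I_n+\rho B)\setminus\overline{I_n}$: since $\tilde\chi$ vanishes on the outer boundary, $\triangle s=w$ inside $A$, and the outward unit normal to $A$ on $\partial I_n$ is $-\nu$, one obtains
\[\int_{\partial I_n}\tilde\chi\,\partial_\nu s\,=\,-\int_A\nabla\tilde\chi\cdot\nabla s-\int_A\tilde\chi\,w.\]
Using $\tilde\chi=\chi-c$ on $\partial I_n$ together with $\int_{\partial I_n}\partial_\nu s=0$, the left-hand side equals $\int_{\partial I_n}\chi\,\partial_\nu s$, and the Cauchy--Schwarz inequality combined with the $H^1$ bound on $\tilde\chi$ yields the desired estimate.

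The only delicate point is the existence of a bounded extension operator $H^1(I_n)\to H^1(I_n+\rho B)$ whose norm does not depend on $n$. This is granted by the regularity assumption on $I_n$: the interior and exterior ball conditions of radius $\rho$, combined with the diameter bound $\diam(I_n)\le1/\rho$, provide a uniform cone (in fact uniform Lipschitz) property, hence a uniform extension constant. Everything else is routine.
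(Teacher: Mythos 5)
Your proof is correct and follows essentially the same route as the paper's: both subtract the mean $c=\fint_{I_n}\chi$ (exploiting the zero-flux condition), extend to $I_n+\rho B$ with a uniformly bounded extension operator and cut-off, and then apply Green's identity on the annulus to land on the same Cauchy--Schwarz estimate. The only cosmetic difference is that the paper packages the argument through an auxiliary harmonic Neumann problem $q$ in $I_n$ (making the duality pairing defining $\int_{\partial I_n}\chi\partial_\nu s$ explicit), whereas you apply the annulus identity directly; the underlying estimate is identical.
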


\begin{proof}
Consider the auxiliary problem
\begin{equation}\label{eq:aux}
\left\{\begin{array}{ll}
\triangle q=0,&\text{in $I_n$},\\
\partial_\nu q=\partial_\nu s,&\text{on $\partial I_n$},\\
\int_{I_n}q=0.
\end{array}\right.
\end{equation}
We split the proof into two steps.

\medskip
\step1 Well-posedness of~\eqref{eq:aux}: there exists a unique weak solution $q\in H^1(I_n)$ of equation~\eqref{eq:aux} and it satisfies
\begin{equation}\label{eq:apriori-aux}
\|\nabla q\|_{\Ld^2(I_n)}\,\lesssim\,\|\nabla s\|_{\Ld^2((I_n+\rho B)\setminus I_n)}+\| w\|_{\Ld^2((I_n+\rho B)\setminus I_n)}.
\end{equation}
In the weak sense, equation~\eqref{eq:aux} means for all $g\in C^\infty_b(I_n)$,
\[\int_{I_n} \nabla g\cdot\nabla q\,=\,\int_{\partial I_n}g\partial_\nu s.\]
In order to prove the well-posedness and the a priori estimate~\eqref{eq:apriori-aux}, it suffices to show that the linear map
\[L(g)\,:=\,\int_{\partial I_n}g\partial_\nu s\]
satisfies for all $g\in C^\infty_b(I_n)$,
\begin{equation}\label{eq:toprove-L}
|L(g)|\,\lesssim\,\|\nabla g\|_{\Ld^2(I_n)}\Big(\|\nabla s\|_{\Ld^2((I_n+\rho B)\setminus I_n)}+\| w\|_{\Ld^2((I_n+\rho B)\setminus I_n)}\Big).
\end{equation}
For that purpose, we start by considering an extension operator
\[T:C^\infty_b(I_n)\to C^\infty_c(I_n+\rho B),\qquad Tg|_{I_n}=g,\]
which can be chosen to satisfy
\begin{equation}\label{eq:extension-bnd}
\|Tg\|_{H^1(I_n+\rho B)}\,\lesssim\,\|Tg\|_{H^1(I_n)}.
\end{equation}
In view of the assumptions on $s$, the linear map $L$ may then be written as follows: for all $g\in C^\infty_b(I_n)$ and $c\in\R$,
\begin{eqnarray*}
L(g)&=&\int_{\partial I_n}(g-c)\partial_\nu s
\,=\,\int_{\partial I_n}T(g-c)\partial_\nu s\\
&=&-\int_{(I_n+\rho B)\setminus I_n}\nabla T(g-c)\cdot\nabla s
-\int_{(I_n+\rho B)\setminus I_n}T(g-c) w,
\end{eqnarray*}
and thus,
\begin{equation*}
|L(g)|\,\le\,\|T(g-c)\|_{H^1(I_n+\rho B)}\Big(\|\nabla s\|_{\Ld^2((I_n+\rho B)\setminus I_n)}+\| w\|_{\Ld^2((I_n+\rho B)\setminus I_n)}\Big).
\end{equation*}
Using~\eqref{eq:extension-bnd} and the Poincaré inequality with the choice $c:=\fint_{I_n}g$, this yields the claim~\eqref{eq:toprove-L}.

\medskip
\step2 Conclusion.\\
In terms of the solution $q$ of the auxiliary problem~\eqref{eq:aux}, we can write
\[\int_{\partial I_n}\chi\partial_\nu s\,=\,\int_{\partial I_n}\chi\partial_\nu q\,=\,\int_{I_n}\nabla\chi\cdot\nabla q,\]
and the conclusion then follows from~\eqref{eq:apriori-aux}.
\end{proof}

\subsection{Homogenization of the vorticity transport equation}
The strong $\Ld^2$ description~\eqref{eq:corr-res-veps} of the fluid velocity $\tilde u_\e$ is of course not enough to describe actual characteristics of the vorticity transport equation.
Instead, we appeal to almost sure two-scale compactness in the following time-dependent form. In the periodic setting, two-scale compactness originates in the works of Nguetseng and Allaire~\cite{Nguetseng-89,Allaire-92} and first appeared in a time-dependent form in~\cite[Theorem~3.2]{E-92}. In the random setting, the so-called two-scale convergence ``in the mean'' was introduced in~\cite{Bourgeat-Mikelic-Wright-94}, and an almost sure version can be found in~\cite[Section~5]{Zhikov-Pyatnitski-06}. A short proof is included for convenience for the present time-dependent random version.

\begin{lem}[Two-scale compactness]\label{lem:compact-2sc}
Let $\{g_\e\}_\e$ be a sequence in $\Ld^2_\loc(\R_+\times\R^2;\Ld^2(\Omega))$ such that almost surely, for all $T,R>0$,
\[\limsup_{\e\downarrow0}\|g_\e\|_{\Ld^2([0,T]\times B_R)}\,<\,\infty.\]
Then, almost surely, up to a subsequence, we have for any $\Psi\in C^\infty_c(\R_+\times\R^2)$ and any stationary random field $\Theta\in C^\infty_b(\R^2;\Ld^2(\Omega))$,
\[\lim_{\e\downarrow0}\int_{\R_+\times\R^2}g_\e(t,x)\Psi(t,x)\Theta(\tfrac x\e)\,dtdx\,=\,\E\bigg[\Theta(0)\int_{\R_+\times\R^2}G(t,x,0)\Psi(t,x)\,dtdx\bigg],\]
where the two-scale limit $G$ belongs to $\Ld^2_\loc(\R_+\times\R^2;\Ld^2_\loc(\R^2;\Ld^2(\Omega)))$ and $G(t,x,\cdot)$ is a stationary random field for all~$t,x$.
\end{lem}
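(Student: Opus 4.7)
I would follow the classical stochastic two-scale convergence strategy — Nguetseng--Allaire in the periodic case, Zhikov--Pyatnitski in the stationary setting — adapted to the time-dependent version stated here. First, I would reduce the statement to a countable family of test functions: using the separability of $\Ld^2(\Omega)$ and of $C_c^0(\R_+\times\R^2)$, pick countable dense subsets $\{\Psi_k\}_k\subset C_c^0(\R_+\times\R^2)$ and $\{\Theta_\ell\}_\ell$ of smooth stationary random fields, the latter obtained by convolving stationary lifts $x\mapsto T_x\theta$ of a countable dense $\{\theta_\ell\}\subset\Ld^2(\Omega)$ with smooth compactly supported mollifiers in $\R^2$.

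For each fixed pair $(k,\ell)$, I would establish almost-sure uniform (in $\e$) bounds on the pairing
\[
I_\e^{k,\ell}(\omega)\,:=\,\int_{\R_+\times\R^2}g_\e(t,x)\,\Psi_k(t,x)\,\Theta_\ell(x/\e)\,dtdx
\]
via Cauchy--Schwarz: the first factor $\|g_\e\|_{\Ld^2(\supp\Psi_k)}$ is almost surely bounded by hypothesis, while the second factor $\|\Psi_k\Theta_\ell(\cdot/\e)\|_{\Ld^2}$ is controlled by the multi-parameter Birkhoff ergodic theorem applied to the stationary random field $|\Theta_\ell|^2$, giving $\|\Theta_\ell(\cdot/\e)\|^2_{\Ld^2(K)}\to|K|\,\E[\Theta_\ell(0)^2]$ almost surely for bounded $K$. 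On the full-probability event $\Omega_0:=\bigcap_{k,\ell}\Omega_{k,\ell}$ on which all these bounds hold, Bolzano--Weierstrass followed by diagonal extraction yields a subsequence along which $I_\e^{k,\ell}(\omega)$ converges to some $L^{k,\ell}(\omega)$ for every $(k,\ell)$ simultaneously.

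Thanks to the previous step's uniform bounds and the density of $\{\Psi_k\}\otimes\{\Theta_\ell\}$, the functional $(\Psi,\Theta)\mapsto L(\Psi,\Theta)(\omega):=\lim_\e I_\e^{\Psi,\Theta}(\omega)$ extends by continuity to a bilinear bounded form on $\Ld^2_\loc(\R_+\times\R^2)\times\mathcal H$, where $\mathcal H$ denotes the Hilbert space of stationary $\Ld^2$ random fields (isometrically identified with $\Ld^2(\Omega)$ via $\Theta\mapsto\Theta(0)$). Riesz representation in the second argument produces, for a.e.\@ $\omega$, an element $G(\cdot,\cdot)\in\Ld^2_\loc(\R_+\times\R^2;\Ld^2(\Omega))$ with
\[
L(\Psi,\Theta)(\omega)\,=\,\int_{\R_+\times\R^2}\Psi(t,x)\,\E\big[G(t,x)\,\Theta(0)\big]\,dtdx.
\]
Setting $G(t,x,y,\cdot):=T_y G(t,x,\cdot)$ promotes this to a field stationary in $y$ satisfying $G(t,x,0)=G(t,x)$, which is exactly the representation of the statement.

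The main obstacle I anticipate is the standard subtlety of random two-scale convergence: upgrading the possibly $\omega$-dependent subsequence to a deterministic one, and ensuring a joint measurable structure for $G$ as a function of $(t,x,y,\omega)$. This is handled in the classical manner (cf.~\cite{Zhikov-Pyatnitski-06}) either by a further weak-compactness extraction in $\Ld^2(\Omega)$ for the $L^{k,\ell}$, or by noticing that these limits are shift-invariant — hence deterministic by ergodicity. For our purposes the pathwise form already suffices.
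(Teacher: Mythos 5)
Your proposal is correct and follows essentially the same route as the paper: Cauchy--Schwarz plus the ergodic theorem to obtain an almost-sure $\limsup$ bound on pairings against tensor-product test functions, then density, compactness/diagonal extraction, and Riesz representation in a separable Hilbert space of stationary random fields to produce the two-scale limit $G$. The only cosmetic difference is that you construct a countable dense family of smooth stationary fields by hand while the paper works directly with the abstract separable space $\mathcal H_{T,R}$ and does an additional diagonal extraction in $T,R$; the underlying argument is the same. One small remark: the phrase ``almost surely, up to a subsequence'' in the lemma (and in the way it is applied in the paper) permits an $\omega$-dependent subsequence, so the concern you flag at the end about upgrading to a deterministic subsequence is not actually needed here.
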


\begin{proof}
For all $N\ge1$, $\Psi_1,\ldots,\Psi_N\in C^\infty([0,T]\times B_R)$, and stationary random fields $\Theta_1,\ldots,\Theta_N\in C^\infty_b(\R^2;\Ld^2(\Omega))$, the Cauchy--Schwarz inequality yields
\begin{multline*}
\bigg|\int_{[0,T]\times B_R }g_\e(t,x)\Big(\sum_{i=1}^N\Psi_i(t,x)\Theta_i(\tfrac x\e)\Big)dtdx\bigg|\,
\\
\le\,\|g_\e\|_{\Ld^2([0,T]\times B_R)}\bigg(\int_{[0,T]\times B_R}\Big|\sum_{i=1}^N \Psi_i(t,x)\Theta_i(\tfrac x\e)\Big|^2\,dtdx\bigg)^\frac12,
\end{multline*}
and thus, by the ergodic theorem, we get almost surely
\begin{multline}\label{eq:ergth-test-geps}
\limsup_{\e\downarrow0}\bigg|\int_{[0,T]\times B_R}g_\e(t,x)\Big(\sum_{i=1}^N\Psi_i(t,x)\Theta_i(\tfrac x\e)\Big)dtdx\bigg|\\
\,\le\,\Big(\limsup_{\e\downarrow0}\|g_\e\|_{\Ld^2([0,T]\times B_R)}\Big)\bigg(\int_{[0,T]\times B_R}\E\bigg[\Big|\sum_{i=1}^N \Psi_i\Theta_i(0)\Big|^2\bigg]dtdx\bigg)^\frac12.
\end{multline}
Note that the set
\begin{multline*}
\Big\{(t,x,y,\omega)\mapsto\sum_{i=1}^N\Psi_i(t,x)\Theta_i(y,\omega)~:~N\ge1,~\Psi_1,\ldots,\Psi_N\in C^\infty([0,T]\times B_R),\\[-3mm]
\text{and}~\Theta_1,\ldots,\Theta_N\in C^\infty_b(\R^2;\Ld^2(\Omega))~\text{stationary}\Big\}
\end{multline*}
is dense in the Hilbert space
\begin{multline*}
\mathcal H_{T,R}\,:=\,\Big\{H\in\Ld^2([0,T]\times B_R;\Ld^2_\loc(\R^2;\Ld^2(\Omega)))\,:\,\\[-1mm]
\text{$H(t,x,\cdot)$ is a stationary random field for all~$t,x$}\Big\},
\end{multline*}
endowed with the norm
\[\|H\|_{\mathcal H_{T,R}}\,=\,\Big(\int_{[0,T]\times B_R}\E[|H(t,x,0)|^2]\,dtdx\Big)^\frac12.\]
As $\mathcal H_{T,R}$ is separable (using that $\Omega$ is endowed with a countably generated $\sigma$-algebra), the bound~\eqref{eq:ergth-test-geps} ensures by compactness
that, up to a subsequence, we have for all $H\in\mathcal H_{T,R}$,
\[\lim_{\e\downarrow0}\int_{[0,T]\times B_R}g_\e(t,x)H(t,x,\tfrac x\e)\,dtdx\,=\,L_{T,R}(H),\]
for some bounded linear functional $L_{T,R}$ on $\mathcal H_{T,R}$.
By the Riesz representation theorem, $L_{T,R}$ can be represented by an element in $\mathcal H_{T,R}$.
Combining this with a diagonal extraction as $T,R\uparrow\infty$, the conclusion follows.
\end{proof}

With the above compactness result at hand, we are in position to establish the following partial homogenization result for the vorticity transport equation.

\begin{prop}\label{prop:transport-imper0}
Almost surely, the extracted limit $(w,u)$ in~\eqref{eq:conv-omu-extr} satisfies the following: there exists $W\in\Ld^2_\loc(\R_+\times\R^2;\Ld^2_\loc(\R^2;\Ld^2(\Omega)))$ such that $W(t,x,\cdot)$ is a stationary random field for almost all $t,x$, and such that
\begin{equation}\label{eq:Weqn}
\left\{\begin{array}{ll}
 w(t,x)=\E[W(t,x,\cdot)],&\text{for almost all $t,x$},\\[1mm]
W(t,x,\cdot)=0,&\text{in $\Ic$, a.s.\@ for almost all $t,x$},\\[1mm]
\Div(W(t,x,\cdot)R_{t,x})=0,&\text{in $\R^2$, a.s.\@ for almost all $t,x$},\\[1mm]
\partial_t w(t,x)+\Div_x\E[W(t,x,\cdot)R_{t,x}]=g,&\text{in $\R^2$},\\
w|_{t=0}= w^\circ,&
\end{array}\right.
\end{equation}
where we have set for abbreviation
\begin{equation}\label{eq:Rtx-short}
R_{t,x}\,:=\,-(e_i+\nabla\varphi_i)^\bot (u(t,x))_i^\bot.
\end{equation}
\end{prop}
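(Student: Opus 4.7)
The plan is to apply the two-scale compactness Lemma \ref{lem:compact-2sc} to $\tilde w_\e$, then combine the resulting two-scale limit with the strong corrector convergence from Proposition \ref{prop:omu-lim}(ii) to pass to the limit in the weak formulation of the extended vorticity equation~\eqref{eq:vort-tildeom-tsp}. The oscillating and macroscopic test functions will respectively produce the cell-level invariance and the macroscopic transport equation.

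First, I apply Lemma \ref{lem:compact-2sc} to the sequence $\{\tilde w_\e\}_\e$, which is bounded in $\Ld^\infty_\loc(\R_+;\Ld^1\cap\Ld^\infty(\R^2))$ by~\eqref{eq:bnd-omueps-apriori}. Up to a further extraction, $\tilde w_\e$ admits a two-scale limit $W$ with $W(t,x,\cdot)$ stationary. Choosing the oscillating test $\Theta\equiv 1$ and comparing with~\eqref{eq:conv-omu-extr} identifies $w=\E[W(\cdot,\cdot,0)]$. The vanishing of $W$ on $\Ic$ is established by testing against stationary $\Theta$ supported on $\Ic$ (obtained by density of smooth stationary fields): as $\tilde w_\e$ vanishes on $\Ic_\e$, the product $\tilde w_\e\,\Theta(\cdot/\e)\Psi$ is identically zero for every $\e$, which forces $\E[W(t,x,0)\Theta(0)]=0$ and hence the pointwise localization on $\Ic$.

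The delicate step is the cell-level invariance $\Div(W(t,x,\cdot)R_{t,x})=0$. I test the weak form of~\eqref{eq:vort-tildeom-tsp} against $\phi_\e(t,x):=\e\,\Psi(t,x)\Theta(x/\e)$, with $\Psi\in C^\infty_c(\R_+\times\R^2)$ and $\Theta$ a smooth stationary random field. All resulting terms are $O(\e)$ by the a priori bounds except the singular one produced by $\e^{-1}(\nabla\Theta)(x/\e)$ in $\nabla\phi_\e$, which remains of order one and yields in the limit
\begin{equation*}
\lim_{\e\downarrow 0}\int_{\R_+\times\R^2}\tilde u_\e\,\tilde w_\e\cdot(\nabla\Theta)(\tfrac\cdot\e)\,\Psi\,dtdx\,=\,0.
\end{equation*}
The strong convergence~\eqref{eq:corr-res-veps} then allows me to replace $\tilde u_\e(t,x)$ by $R_{t,x}(x/\e)=-(e_i+\nabla\varphi_i(x/\e))^\bot u^\bot_i(t,x)$ up to an $\Ld^2_\loc$ error vanishing with $\e$; the remaining integrand has the form $\tilde w_\e(t,x)\,\bigl(\Psi(t,x)u_i^\bot(t,x)\bigr)\,\Xi_i(x/\e)$ with $\Xi_i:=-(e_i+\nabla\varphi_i)^\bot\cdot\nabla\Theta$ a stationary random field. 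Using the regularity $u\in\Ld^\infty_\loc(\R_+;W^{1,p}\cap W^{r,\infty})$ from Proposition \ref{prop:omu-lim}(i) to approximate $\Psi u_i^\bot$ by finite sums of smooth products, Lemma \ref{lem:compact-2sc} then gives
\begin{equation*}
\int_{\R_+\times\R^2}\Psi(t,x)\,\E\bigl[W(t,x,\cdot)\,R_{t,x}(\cdot)\cdot\nabla\Theta(\cdot)\bigr]\,dtdx\,=\,0.
\end{equation*}
Arbitrariness of $\Psi$ and $\Theta$, combined with the standard stationary-ergodic identification of distributional divergences of stationary fields, translates this into the almost sure identity $\Div(W(t,x,\cdot)R_{t,x})=0$ in $\R^2$ for almost every $(t,x)$.

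Finally, testing the weak form of~\eqref{eq:vort-tildeom-tsp} against a purely macroscopic $\Psi(t,x)$, the linear terms pass to their limits by~\eqref{eq:conv-omu-extr}, and the flux term $\int\tilde u_\e\tilde w_\e\cdot\nabla\Psi$ is handled by the same strong-times-two-scale argument as above, converging to $\int\E[W(t,x,\cdot)R_{t,x}(\cdot)]\cdot\nabla\Psi(t,x)\,dtdx$. The initial condition $w|_{t=0}=w^\circ$ is read off from the convergence of the data~\eqref{eq:conv-data-imperm} and from the strong time-continuity~\eqref{eq:strongconv-om-imp}. The main technical obstacle throughout is making the product of strong and two-scale convergences rigorous in the presence of the relatively rough macroscopic factor $u$: this is precisely where the elliptic regularity gain established in Proposition \ref{prop:omu-lim}(i), together with density of smooth stationary fields, is essential.
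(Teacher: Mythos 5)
Your proof follows essentially the same route as the paper's: apply the two-scale compactness Lemma~\ref{lem:compact-2sc} to $\tilde w_\e$, test the extended vorticity equation~\eqref{eq:vort-tildeom-tsp} against $\Psi(t,x)\Theta(x/\e)$, multiply by $\e$ and use the corrector result~\eqref{eq:corr-res-veps} together with the two-scale convergence to obtain the cell-level invariance $\Div(W R_{t,x})=0$, and then set $\Theta\equiv1$ for the macroscopic transport equation. The only difference is that you are somewhat more explicit about the density/regularity step needed to test against the non-smooth macroscopic factor $\Psi\,u_i^\bot$, a technicality the paper leaves implicit; this is a welcome elaboration, not a divergence in method.
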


\begin{proof}
By Lemma~\ref{lem:compact-2sc}, almost surely, up to extracting a further subsequence, there exists $W\in\Ld^2_\loc(\R_+\times\R^2;\Ld^2_\loc(\R^2;\Ld^2(\Omega)))$ such that $W(t,x,\cdot)$ is a stationary random field for all $t,x$ and such that for all $\Psi\in C^\infty_c(\R_+\times\R^2)$ and all stationary random fields $\Theta\in C^\infty_b(\R^2;\Ld^2(\Omega))$ we have
\begin{equation}\label{eq:2scale-conv}
\int_{\R_+\times\R^2}\tilde w_\e(t,x)\,\Psi(t,x)\Theta(\tfrac x\e)\,dtdx\,\to\,\E\bigg[\Theta(0)\int_{\R_+\times\R^2}W(t,x,0)\,\Psi(t,x)\,dtdx\bigg].
\end{equation}
In particular, the weak limit $w$ is related to the two-scale limit via
\begin{equation*}
 w(t,x)\,=\,\E[W(t,x,\cdot)].
\end{equation*}
In addition, as by definition $\tilde w_\e=0$ in $\e\Ic$, we deduce $W(t,x,\cdot)=0$ in $\Ic$ a.s.\@ for almost all~$t,x$.
Next, recall that the extended vorticity~$\tilde w_\e$ satisfies~\eqref{eq:vort-tildeom-tsp},
\[\partial_t\tilde w_\e+\Div(\tilde u_\e\tilde w_\e)=\tilde g_\e,\qquad\text{in $\R^2$}.\]
The weak formulation of this equation yields the following: for all $\Psi\in C^\infty_c(\R_+\times\R^2)$ and $\Theta\in C^\infty_b(\R^2;\Ld^2(\Omega))$,
\begin{multline}\label{eq:test-ome-Psi}
\int_{\R^2}\tilde w_\e^\circ(x)\,\Psi(0,x)\Theta(\tfrac x\e)\,dx
+\int_{\R_+\times\R^2}\tilde w_\e(t,x)\,(\partial_t\Psi)(t,x)\Theta(\tfrac x\e)\,dtdx\\
+\int_{\R_+\times\R^2}\tilde u_\e(t,x)\tilde w_\e(t,x)\cdot\big(\nabla\Psi(t,x)\Theta(\tfrac x\e)+\tfrac1\e\Psi(t,x)\nabla\Theta(\tfrac x\e)\big)\,dtdx\\
+\int_{\R_+\times\R^2}\tilde g_\e(t,x)\Psi(t,x)\Theta(\tfrac x\e)\,dtdx
\,=\,0.
\end{multline}
On the one hand, multiplying both sides by $\e$ and letting $\e\downarrow0$, we get
\[\int_{\R_+\times\R^2}\tilde u_\e(t,x)\tilde w_\e(t,x)\cdot \Psi(t,x)\nabla\Theta(\tfrac x\e)\,\to\,0,\]
and thus, using~\eqref{eq:corr-res-veps} and~\eqref{eq:2scale-conv},
\[\int_{\R_+\times\R^2}\Psi(t,x)\,\E\big[\nabla\Theta(0)\cdot (e_i+\nabla\varphi_i)^\bot W(t,x,0)\big]\,u_i^\bot(t,x)\,dtdx\,=\,0.\]
By the arbitrariness of $\Psi,\Theta$, this means that for almost all $t,x$ the stationary random field $W(t,x,\cdot)\in\Ld^2_\loc(\R^2;\Ld^2(\Omega))$ satisfies
\begin{equation*}
\Div( W(t,x,\cdot)R_{t,x})\,=\,0,
\end{equation*}
with the short-hand notation $R_{t,x}$, cf.~\eqref{eq:Rtx-short}. 
On the other hand, choosing $\Theta=1$ in~\eqref{eq:test-ome-Psi}, and using~\eqref{eq:2scale-conv} to pass to the limit, we further get the desired macroscopic transport equation in~\eqref{eq:Weqn}.
\end{proof}

The third equation in~\eqref{eq:Weqn} means that for almost all $t,x$ the stationary random field~$W(t,x,\cdot)$ is an invariant measure for the dynamics generated by the vector field $R_{t,x}$.
In general, even in the periodic setting, such invariant measures might not be unique, so that the limit system~\eqref{eq:Weqn} would not characterize the limit uniquely.
We refer to~\cite{Jabin-Tzavaras-09} for possible pathological behaviors in the homogenization of multiscale transport equations.
More precisely, the issue is not exactly the non-uniqueness of the invariant measure, but rather the fact that the Herman rotation set
\[C_{t,x}\,:=\,\Big\{\E[\mu R_{t,x}]\,:\,\mu\in\Ld^2(\Omega)~\text{is invariant under the flow generated by $R_{t,x}$}\Big\}\]
might not reduce to a singleton.
As shown in~\cite[Theorem~1.2]{Franks-Misiurewicz-90}, in the 2D periodic setting, this rotation set is either a singleton, or a closed line segment passing through $0$ with rational slope, or a closed line segment rooted at $0$ with irrational slope.
As noted in~\cite{Briane-Herve-21,Briane-Herve-22}, this allows to project the drift in this direction in the homogenized transport equation, but it still leaves a 1D indeterminacy in general.
To get beyond this, a more detailed analysis of the dynamics is necessary.

When the vector field is incompressible, as it is the case for $R_{t,x}$ here, the behavior of the generated dynamics is in fact well understood in the 2D periodic setting. Indeed, as shown by Arnol$'$d~\cite{Arnold-91}, provided that the two frequencies of the vector field are incommensurable, there necessarily exist finitely many domains $\{U_k\}_{1\le k\le r}$ in the periodicity cell such that the trajectories in each $U_k$ are either periodic or tend to a fixed point, and such that outside of those domains the trajectories form one single ergodic class. In particular, if the incompressible vector field has no fixed point, then we recover the standard result that the dynamics admits a unique invariant measure whenever its two frequencies are incommensurable; see e.g.~\cite[Section~2]{E-92}. In the presence of fixed points, on the contrary, part of the mass can get trapped into bounded periodic trajectories at the microscale, thus leading in the macroscopic limit to a localization phenomenon. The remaining question is thus twofold:
\begin{enumerate}[(a)]
\item Does the flow generated by the vector field $R_{t,x}$ admit a fixed point (or, equivalently, does the vector field $R_{t,x}$ vanish at a point)?
This is the case if there exists a direction $e\in\Sp^1$ such that the corrected harmonic coordinate $x\mapsto e\cdot x+\varphi_e(x)$ admits a critical point. If it does, then a localization phenomenon would appear for the macroscopic vorticity.
\smallskip\item
In the stationary random setting, is the absence of fixed points of $R_{t,x}$ enough to guarantee the uniqueness of the invariant measure?
\end{enumerate}
Question~(a) is addressed in Section~\ref{sec:harmcoord}, where the localization phenomenon is excluded in full generality: by a post-processing of the work of  Alessandrini and Nesi~\cite{Alessandrini-Nesi-01,Alessandrini-Nesi-18}, we show that in 2D corrected harmonic coordinates $x\mapsto x+\varphi(x)$ define a diffeomorphism of~$\R^2\setminus\Ic$ onto its image, cf.~Theorem~\ref{th:diffeo-rigid}, which ensures that the vector field $R_{t,x}$ cannot have critical points. This is proven to hold even in the general stationary random setting.
In contrast, question~(b) is more subtle and we do not yet have a definite answer, cf.~Question~\ref{qu:unique} above, except in the particular setting of Lemma~\ref{lem:unique}.

\subsection{Conclusion}
Combining the above results, we are now in position to conclude the proof of Theorems~\ref{th:imper} and~\ref{th:imper-rand}. We start with the latter.

\begin{proof}[Proof of Theorem~\ref{th:imper-rand}]
Let $(w,u)$ be the extracted limit in~\eqref{eq:conv-omu-extr}.
By Proposition~\ref{prop:transport-imper0},
there exists $W\in\Ld^2_\loc(\R_+\times\R^2;\Ld^2_\loc(\R^2;\Ld^2(\Omega)))$ such that $W(t,x,\cdot)$ is a stationary random field for all $t,x$ and such that~\eqref{eq:Weqn} is satisfied. In particular, for almost all $t,x$, almost surely, $W(t,x,\cdot)$ is an invariant measure for the dynamics generated by the vector field~$R_{t,x}$ defined in~\eqref{eq:Rtx-short}. Assuming that the answer to Question~\ref{qu:unique} is positive, this entails that $W(t,x,\cdot)$ is almost surely constant in $\R^2\setminus\Ic$ for almost all $t,x$. As~\eqref{eq:Weqn} yields $W(t,x,\cdot)=0$ in $\Ic$ and $\E[W(t,x,\cdot)]=w(t,x)$, we deduce
\[W(t,x,\cdot)\,=\,\tfrac{1}{1-\lambda}\,w(t,x)\mathds1_{\R^2\setminus\Ic},\]
in terms of the inclusions' volume fraction $\lambda:=\E[\mathds1_\Ic]$.
Hence, by definition~\eqref{eq:Rtx-short} of $R_{t,x}$, we find
\[\E[W(t,x,\cdot)R_{t,x}]\,=\,\tfrac{1}{1-\lambda}w(t,x)u(t,x),\]
and equation~\eqref{eq:Weqn} then reduces to the following transport equation for $w$,
\begin{equation*}
\left\{\begin{array}{ll}
\partial_t w+\frac1{1-\lambda}\Div(wu)=g,&\text{in $\R_+\times\R^2$},\\[1mm]
w|_{t=0}=w^\circ.&
\end{array}\right.
\end{equation*}
Combining this with Proposition~\ref{prop:omu-lim} and noting that the uniqueness of the solution of the limit problem allows to get rid of the extraction of a subsequence, this concludes the proof.
\end{proof}

We now turn to the proof of Theorem~\ref{th:imper} in the periodic setting. Recall that the periodic setting can be viewed as a particular case of our stationary random setting, by setting $\Omega=Q$ the periodicity cell and by replacing expectation by periodic averaging.

\begin{proof}[Proof of Theorem~\ref{th:imper}]
Let $(w,u)$ be the extracted limit in~\eqref{eq:lim-omu-res}, cf.~\eqref{eq:conv-omu-extr}.
Repeating the proof of Proposition~\ref{prop:omu-lim}, we obtain that $u$ satisfies
\begin{equation}\label{eq:lim-divcurl-per}
\left\{\begin{array}{rcll}
\Div(u)&=&0,&\text{in $\R^2$},\\[1mm]
\curl(\bm{\bar m} u)&=&w,&\text{in $\R^2$},
\end{array}\right.\end{equation}
where $\bm{\bar m}$ is now the symmetric matrix defined in~\eqref{eq:def-barm}--\eqref{eq:def-bara}. Recall that this ensures $u\in\Ld^\infty_\loc(\R_+;W^{1,p}\cap W^{r,\infty}(\R^2)^2)$ for all $2\le p<\infty$ and $r<1$.
In addition, repeating the proof of Proposition~\ref{prop:transport-imper0}, we can show that there exists $W\in \Ld^2_\loc(\R_+\times\R^2;\Ld^2_\per(Q))$ such that the limit vorticity $ w$ satisfies
\begin{equation}\label{eq:Weqn-per}
\left\{\begin{array}{ll}
 w(t,x)=\int_QW(t,x,\cdot),&\text{for almost all $t,x$},\\[1mm]
W(t,x,\cdot)=0,&\text{in $\Ic$, for almost all $t,x$},\\[1mm]
\Div(W(t,x,\cdot)R_{t,x})=0,&\text{in $Q\setminus\Ic$, for almost all $t,x$},\\[1mm]
\partial_t w(t,x)+\Div_x\big(\int_QW(t,x,\cdot)R_{t,x}\big)=g,&\text{in $\R_+\times\R^2$},\\[1mm]
w|_{t=0}= w^\circ,&
\end{array}\right.
\end{equation}
where we have set for abbreviation
\[R_{t,x}\,:=\,-(e_i+\nabla\varphi_i)^\bot (u(t,x))_i^\bot~\in~\Ld^2_\per(Q)^2,\]
where $\varphi_i\in H^1_\per(Q)$ now stands for the periodic corrector~\eqref{eq:varphi-imperm}. In order to conclude, as in the proof of Theorem~\ref{th:imper-rand} above, it suffices to show that $W(t,x,\cdot)$ is constant in $Q\setminus \Ic$.

For that purpose, we start by checking that the frequencies of $R_{t,x}$ are almost never in resonance. More precisely, we show that the set
\begin{equation}\label{eq:badset-N}
N\,:=\,\big\{(t,x)\in\R_+\times\R^2:\exists (e_1,e_2)\in\R^2, \,e_1/e_2\in\Q,\,u(t,x)=e\big\}
\end{equation}
has zero Lebesgue measure.
Recall that $u$ belongs to $\Ld^\infty_\loc(\R_+;W^{1,p}\cap W^{r,\infty}(\R^2))$ for all $2\le p<\infty$ and $r<1$. The evolution equation for $w$ in~\eqref{eq:Weqn-per} then yields in particular $\partial_t w\in \Ld^\infty_\loc(\R_+;\dot H^{-1}(\R^2))$. Taking the time derivative in the limit div-curl problem~\eqref{eq:lim-divcurl-per}, this easily implies $\partial_tu\in \Ld^\infty_\loc(\R_+;\Ld^2(\R^2))$. Therefore, by interpolation, we can conclude that $u$ belongs to $C_\loc(\R_+;C^r_\loc(\R^2))$ for all $r<1$. Given this regularity, if the set~\eqref{eq:badset-N} did not have zero Lebesgue measure, there would exist a nonempty open set $U\subset\R_+\times\R^2$, a vector $(e_1,e_2)\in\R^2$ with $e_1/e_2\in\Q$, and a continuous scalar function $h:U\to\R$ such that $u(t,x)=eh(t,x)$ for all $(t,x)\in U$. Now, as $u=\nabla^\bot s$ for some stream function $s$, this relation would imply that the function $h$ is of the form $h(t,x)=h_0(t,e^\bot\cdot x)$ on~$U$, which is precisely excluded by the non-degeneracy assumption~\eqref{eq:non-deg-re}.

Let us now turn back to the analysis of~\eqref{eq:Weqn-per}.
For $(t,x)\notin N$, the vector field $R_{t,x}$ takes the form $(e+\nabla\varphi_e)^\bot$ for some $e=(e_1,e_2)$ with $e_1/e_2\notin\Q$.
As the vector field $(e+\nabla\varphi_e)^\bot$ is periodic, divergence-free, smooth in $Q\setminus\overline \Ic$, as it is tangential to the boundary $\partial\Ic$, as it has incommensurate frequencies in the sense that
\[\int_Q(e+\nabla\varphi_e)^\bot=e^\bot=(-e_2,e_1),\qquad e_1/e_2\notin\Q,\]
and as it has no singular points in the sense that $|e+\nabla\varphi_e|>0$ on $Q\setminus\overline \Ic$ by Theorem~\ref{th:diffeo-rigid},
it is well known (see e.g.~\cite[Section~2]{E-92}) that the associated flow is ergodic outside $\overline\Ic$, with a unique invariant measure that is the Lebesgue measure restricted to $Q\setminus\overline \Ic$. In other words, any periodic measure solution~$\mu$ of the equation
\[\Div(\mu(e+\nabla\varphi_e)^\bot)=0,\qquad\text{in $Q\setminus\Ic$},\]
is proportional to the Lebesgue measure in $Q\setminus\Ic$.
From~\eqref{eq:Weqn-per}, we may thus deduce that $W(t,x,\cdot)$ is constant in $\R^2\setminus\Ic$ for all $(t,x)\notin N$.
As $N$ is a null set, this concludes the proof.
\end{proof}

\section{Homogenization of the lake equations}
This section is devoted to the proof of Theorem~\ref{th:lake}  in the periodic setting. As it follows along the same lines as the proof of Theorem~\ref{th:imper} above, we only give a short sketch.
Let $b:\R^2\to\R_+$ be a periodic function that satisfies~\eqref{eq:unifell} and that is locally Hölder-continuous.

With the assumptions on the data $u_\e^\circ,f_\e$, cf.~\eqref{eq:conv-data-lake},
the lake equations~\eqref{eq:lake} admit a unique global weak solution $u_\e\in\Ld^\infty_\loc(\R_+;\Ld^2(\R^2)^2)$ with vorticity $w_\e:=\curl(u_\e)\in\Ld^\infty_\loc(\R_+;\Ld^1\cap\Ld^\infty(\R^2))$, cf.~\cite{Levermore-Oliver-Titi-1,Levermore-Oliver-Titi-2}.
In addition, in terms of $w_\e^\circ:=\curl(u_\e^\circ)$ and $g_\e:=\curl(f_\e)$, the lake equations may alternatively be written in vorticity formulation,
\begin{equation}\label{eq:imper-vort}
\left\{\begin{array}{ll}
\partial_tw_\e+\Div(u_\e w_\e)=g_\e,&\text{in $\R^2$},\\
\Div(b(\tfrac\cdot\e)u_\e)=0,&\text{in $\R^2$},\\
\curl(u_\e)=w_\e,&\text{in $\R^2$},\\
w_\e|_{t=0}=w_\e^\circ,&
\end{array}\right.
\end{equation}
and we have the following a priori estimates, for all $t\ge0$,
\begin{eqnarray*}
\|w_\e(t)\|_{\Ld^1\cap\Ld^\infty(\R^2)}&\lesssim&\|w_\e^\circ\|_{\Ld^1\cap\Ld^\infty(\R^2)}+\|g_\e\|_{\Ld^1(0,t;\Ld^1\cap\Ld^\infty(\R^2))},\\
\|u_\e(t)\|_{\Ld^2(\R^2)}&\lesssim&\|u_\e^\circ\|_{\Ld^2(\R^2)}+\|f_\e\|_{\Ld^1(0,t;\Ld^2(\R^2))},
\end{eqnarray*}
where the multiplicative constants only depend on the constant $C_0$ in~\eqref{eq:unifell}, and
where the right-hand sides are uniformly bounded as $\e\downarrow0$ by assumption~\eqref{eq:conv-data-lake}.
By weak compactness, we deduce, up to a subsequence,
\begin{equation}\label{eq:conv-omu-extr-lake}
\begin{array}{rcll}
w_\e&\cvfs&w,\qquad&\text{in $\Ld^\infty_\loc(\R_+;\Ld^1\cap\Ld^\infty\cap\dot H^{-1}(\R^2))$},\\
u_\e&\cvfs& u,&\text{in $\Ld^\infty_\loc(\R_+;\Ld^2(\R^2))$},\\
b(\tfrac\cdot\e)u_\e&\cvfs& v,&\text{in $\Ld^\infty_\loc(\R_+;\Ld^2(\R^2))$},
\end{array}\end{equation}
for some limit $(w,u,v)$, which remains to be characterized.
As in Section~\ref{sec:homog-imper}, we start with the homogenization of the div-curl problem for $u_\e$ in~\eqref{eq:imper-vort}. In the present case, the div-curl problem is equivalent to a scalar uniformly-elliptic problem and we can appeal to standard homogenization theory.

\begin{prop}\label{prop:lake-1}
The extracted limit $(w,u,v)$ in~\eqref{eq:conv-omu-extr-lake} satisfies $v=\bb u$ and the following properties.
\begin{enumerate}[(i)]
\item \emph{Homogenized div-curl problem:}
\begin{equation*}
\left\{\begin{array}{rcll}
\Div(\bb u)&=&0,&\text{in $\R^2$},\\[1mm]
\curl(u)&=&w,&\text{in $\R^2$},
\end{array}\right.\end{equation*}
where $\bb\in\R^{2\times 2}$ is the symmetric matrix defined in~\eqref{eq:def-barb-lake}--\eqref{eq:def-cor-lake}. In particular, this implies additional regularity: the limit $u$ necessarily belongs to $\Ld^\infty_\loc(\R_+;W^{1,p}\cap W^{r,\infty}(\R^2)^2)$ for all $2\le p<\infty$ and $r<1$.
\smallskip\item \emph{Corrector result:} along the extracted subsequence, we have the strong convergence
\begin{equation*}
u_\e+(b^{-1}(e_i+\nabla\psi_i)^\bot)(\tfrac\cdot\e)\,(\bb u)^\bot_i~\to~0,\qquad\text{in $\Ld^\infty_\loc(\R_+;\Ld^2_{\loc}(\R^2))$,}
\end{equation*}
where $\psi_i\in H^1_\per(\R^2)$ stands for the periodic corrector~\eqref{eq:def-cor-lake} in the direction $e_i$.
\end{enumerate}
\end{prop}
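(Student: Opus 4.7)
The plan is to reduce the div-curl problem for $u_\e$ to a scalar uniformly elliptic equation for a stream function and then apply classical periodic elliptic homogenization, following the template of Proposition~\ref{prop:omu-lim} but with substantial simplifications due to uniform ellipticity.

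Since $\Div(b(\tfrac\cdot\e)u_\e)=0$ in $\R^2$, there exists $s_\e\in\Ld^\infty_\loc(\R_+;\dot H^1(\R^2))$ with $b(\tfrac\cdot\e)u_\e=\nabla^\bot s_\e$. The relation $\curl(u_\e)=w_\e$ then rewrites as
\begin{equation*}
\Div\big(b^{-1}(\tfrac\cdot\e)\nabla s_\e\big)=w_\e,\qquad\text{in $\R^2$},
\end{equation*}
which is a uniformly elliptic scalar equation by~\eqref{eq:unifell}. Standard qualitative periodic elliptic homogenization, see e.g.~\cite[Chapter~1]{JKO94}, then gives $\nabla s_\e\cvf\nabla s$ in $\Ld^\infty_\loc(\R_+;\Ld^2(\R^2))$, where $s$ is the unique decaying solution of $\Div(\bm{\bar a}\nabla s)=w$, with $\bm{\bar a}$ defined in~\eqref{eq:def-bara-lake}. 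Passing to the weak limit in $\nabla^\bot s_\e=b(\tfrac\cdot\e)u_\e$ yields $v=\nabla^\bot s$, hence $\Div(v)=0$. Using the identity $\int_Q b^{-1}(\nabla\psi_j+e_j)=\bm{\bar a}e_j$ (which is the content of the second equality of~\eqref{eq:def-bara-lake}), a two-scale computation identifies the weak limit as $u=J\bm{\bar a}\nabla s$, with $J$ the rotation matrix of~\eqref{eq:def-barb-lake}. Since $J^T=J^{-1}$ and $J^2=-\Id$, one then checks directly that $\bb u=J^T\bm{\bar a}^{-1}J\cdot J\bm{\bar a}\nabla s=J\nabla s=v$, which yields both $v=\bb u$ and $\Div(\bb u)=\Div(v)=0$; similarly $\curl(u)=\Div(\bm{\bar a}\nabla s)=w$. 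The regularity of $u$ claimed in~(i) follows from the Calder\'on--Zygmund and Riesz theories applied to the constant-coefficient equation $\Div(\bm{\bar a}\nabla s)=w$ with source $w\in\Ld^\infty_\loc(\R_+;\Ld^1\cap\Ld^\infty\cap\dot H^{-1}(\R^2))$.

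For the corrector result~(ii), observe that $(\bb u)^\bot=v^\bot=J(J\nabla s)=-\nabla s$, so that, using $u_\e=b^{-1}(\tfrac\cdot\e)\nabla^\bot s_\e$ and $(e_i+\nabla\psi_i)^\bot=J(e_i+\nabla\psi_i)$,
\begin{equation*}
u_\e+\big(b^{-1}(e_i+\nabla\psi_i)^\bot\big)(\tfrac\cdot\e)\,(\bb u)^\bot_i\,=\,b^{-1}(\tfrac\cdot\e)\,J\big[\nabla s_\e-(e_i+\nabla\psi_i)(\tfrac\cdot\e)\,\partial_is\big].
\end{equation*}
Since $b^{-1}$ is uniformly bounded and $J$ is orthogonal, it suffices to prove the classical $\Ld^2_\loc$ corrector estimate
\begin{equation*}
\lim_{R\uparrow\infty}\limsup_{\e\downarrow0}\int_{\R_+\times\R^2}\chi_R\,\big|\nabla s_\e-(e_i+\nabla\psi_i)(\tfrac\cdot\e)\,\partial_is\big|^2\,=\,0,
\end{equation*}
for time-space cut-offs $\chi_R\in C^\infty_c(\R_+\times\R^2)$ with $\|\nabla\chi_R\|_\infty\lesssim R^{-1}$. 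This is obtained by expanding the square and treating each of the three resulting terms by compensated compactness and the ergodic theorem, following Substeps~3.1--3.3 of Proposition~\ref{prop:omu-lim} verbatim, with one major simplification: uniform ellipticity eliminates the stiff boundary conditions on $\partial I_n$, so Lemma~\ref{lem:trace} and the associated boundary terms do not appear.

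The main remaining technicality is the time compactness of $\{s_\e\}_\e$, needed to handle the cross-term in the above corrector identity. As in Step~2 of Proposition~\ref{prop:omu-lim}, differentiating the elliptic equation in time, testing against $\partial_ts_\e$, and using the bound on $\partial_tw_\e$ coming from the vorticity transport equation~\eqref{eq:imper-vort} and the a priori estimates on $u_\e$, $w_\e$, and $g_\e$ yields $\partial_ts_\e\in\Ld^\infty_\loc(\R_+;\dot H^1(\R^2))$. Combined with the weak convergence of $\nabla s_\e$, the Rellich theorem and the Aubin--Simon lemma then provide, locally in~$x$, the strong time-continuous convergence of $s_\e-\fint_{B_R}s_\e$ that the passage to the limit in the cross-term requires.
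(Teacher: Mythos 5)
Your proof is correct and follows essentially the same approach as the paper's: the paper's proof of Proposition~\ref{prop:lake-1} is only three lines long, observing that $u_\e=b(\tfrac\cdot\e)^{-1}\nabla^\bot s_\e$ reduces the div-curl problem to the uniformly elliptic equation $\Div(b(\tfrac\cdot\e)^{-1}\nabla s_\e)=w_\e$ and then citing standard periodic homogenization references. Your writeup correctly fills in the omitted details --- the algebraic identification $u=J\bm{\bar a}\nabla s$, $v=J\nabla s$, the verification that $\bb u=v$ via $J^2=-\Id$, the reformulation of the corrector statement in terms of $\nabla s_\e-(e_i+\nabla\psi_i)(\tfrac\cdot\e)\,\partial_is$, and the time-compactness argument borrowed from Step~2 of Proposition~\ref{prop:omu-lim} --- all of which are consistent with what ``standard periodic homogenization'' provides in this setting.
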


\begin{proof}
By the incompressibility condition for the fluid velocity $u_\e$ in~\eqref{eq:imper-vort}, there exists a stream function $s_\e\in\Ld^\infty_\loc(\R_+;\dot H^1(\R^2))$ such that $u_\e=b(\tfrac\cdot\e)^{-1}\nabla^\bot s_\e$, and the div-curl problem for~$u_\e$ in~\eqref{eq:imper-vort} then takes form of the following uniformly elliptic problem for $s_\e$,
\[\Div( b(\tfrac\cdot\e)^{-1}\nabla s_\e) = w_\e,\qquad\text{in $\R^2$}.\]
The conclusion then follows from standard periodic homogenization, e.g.~\cite{MuratTartar,JKO94}.
\end{proof}

With the above strong $\Ld^2$ description of the fluid velocity $u_\e$ at hand, we can now turn to the two-scale analysis of the vorticity transport equation. Arguing similarly as for Proposition~\ref{prop:transport-imper0}, now using standard two-scale compactness as in~\cite[Theorem~3.2]{E-92}, we obtain the following. We skip the proof for conciseness.

\begin{prop}\label{prop:lake-2}
The extracted limit $(w,u)$ in~\eqref{eq:conv-omu-extr-lake} satisfies the following: there exists $W\in\Ld^2_\loc(\R_+\times\R^2;\Ld^2_\per(Q))$ such that
\begin{equation}\label{eq:W-syst-lake}
\left\{\begin{array}{ll}
w(t,x)=\int_QW(t,x,\cdot),&\text{for almost all $t,x$},\\[1mm]
\Div(W(t,x,\cdot)R_{t,x})=0,&\text{in $Q$, for almost all $t,x$},\\[1mm]
\partial_tw(t,x)+\Div_x\big(\int_QW(t,x,\cdot)R_{t,x}\big)=g,&\text{in $\R^2$},\\
w|_{t=0}=w^\circ,&
\end{array}\right.
\end{equation}
where we have set for abbreviation
\begin{equation}\label{eq:def-Rtx-per}
R_{t,x}\,:=\,-b^{-1}(e_i+\nabla\varphi_i)^\bot\,(\bb u(t,x))^\bot_i~\in~\Ld^2_\per(\R^2)^2.
\end{equation}
\end{prop}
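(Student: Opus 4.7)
The plan is to follow the same scheme as in the proof of Proposition~\ref{prop:transport-imper0}, now in the simpler uniformly elliptic setting without inclusions, relying on the strong corrector description of $u_\e$ provided by Proposition~\ref{prop:lake-1}(ii). The main input is the classical periodic time-dependent two-scale compactness theorem of~\cite{E-92}: since $\{w_\e\}_\e$ is bounded in $\Ld^\infty_\loc(\R_+;\Ld^2(\R^2))$ (by assumption~\eqref{eq:conv-data-lake}), up to a subsequence there exists $W\in\Ld^2_\loc(\R_+\times\R^2;\Ld^2_\per(Q))$ such that for any $\Psi\in C^\infty_c(\R_+\times\R^2)$ and any periodic $\Theta\in C^\infty_\per(Q)$,
\[
\int_{\R_+\times\R^2}w_\e(t,x)\,\Psi(t,x)\,\Theta(\tfrac x\e)\,dtdx~\longrightarrow~\int_{\R_+\times\R^2\times Q}W(t,x,y)\,\Psi(t,x)\,\Theta(y)\,dtdxdy.
\]
Choosing $\Theta\equiv1$ immediately identifies $w(t,x)=\int_QW(t,x,\cdot)$, and the initial condition $w|_{t=0}=w^\circ$ is inherited from that of $w_\e$ upon using the time compactness ensured by the transport equation and the Aubin--Lions lemma (as done in the proof of Proposition~\ref{prop:transport-imper0}).

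Next, I would start from the weak formulation of the vorticity transport equation for $w_\e$ in~\eqref{eq:imper-vort}, tested against $\Psi(t,x)\Theta(\tfrac x\e)$:
\begin{multline*}
\int_{\R^2}w^\circ_\e(x)\Psi(0,x)\Theta(\tfrac x\e)\,dx+\int_{\R_+\times\R^2}w_\e\partial_t\Psi\,\Theta(\tfrac\cdot\e)\\
+\int_{\R_+\times\R^2}u_\e w_\e\cdot\big(\nabla\Psi\,\Theta(\tfrac\cdot\e)+\tfrac1\e\Psi\,\nabla\Theta(\tfrac\cdot\e)\big)+\int_{\R_+\times\R^2}g_\e\Psi\,\Theta(\tfrac\cdot\e)\,=\,0.
\end{multline*}
Multiplying by $\e$ isolates the scale-$\tfrac1\e$ term and yields
\[\lim_{\e\downarrow0}\int_{\R_+\times\R^2}u_\e\,w_\e\cdot\Psi\,\nabla\Theta(\tfrac\cdot\e)\,=\,0.\]
To identify this limit in terms of $W$, I would substitute the corrector approximation of $u_\e$ provided by Proposition~\ref{prop:lake-1}(ii), namely $u_\e=-(b^{-1}(e_i+\nabla\psi_i)^\bot)(\tfrac\cdot\e)(\bb u)_i^\bot+o_{\Ld^2_\loc}(1)$; the error vanishes in the limit thanks to the $\Ld^\infty$-bound on $w_\e$, while the main term is passed to the limit by two-scale convergence of~$w_\e$. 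This produces the microscopic identity
\[\int_{\R_+\times\R^2\times Q}W(t,x,y)\,R_{t,x}(y)\cdot\nabla_y\Theta(y)\,\Psi(t,x)\,dtdxdy\,=\,0,\]
and the arbitrariness of $\Psi$ and $\Theta$ gives the microscopic divergence-free equation $\Div_y(W(t,x,\cdot)\,R_{t,x})=0$ in $Q$ for almost every $(t,x)$.

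Finally, to obtain the macroscopic transport equation, I would take $\Theta\equiv1$ in the weak formulation. The two critical terms are $\int u_\e w_\e\cdot\nabla\Psi$ and $\int g_\e\Psi$; the latter passes to $\int g\Psi$ by~\eqref{eq:conv-data-lake}. For the former, the same substitution of the corrector expression for $u_\e$ combined with two-scale convergence of $w_\e$ yields
\[\int_{\R_+\times\R^2}u_\e w_\e\cdot\nabla\Psi~\longrightarrow~\int_{\R_+\times\R^2}\Big(\int_Q W(t,x,\cdot)R_{t,x}\Big)\cdot\nabla\Psi(t,x)\,dtdx,\]
which precisely delivers the macroscopic equation in~\eqref{eq:W-syst-lake}.

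The main technical point, as in Proposition~\ref{prop:transport-imper0}, is the justification of passing to the limit in the product $u_\e w_\e$ against oscillating test functions of the form $\Psi\,\nabla\Theta(\tfrac\cdot\e)$ and $\nabla\Psi$: only weak convergence is available for $w_\e$, but the strong corrector description~(ii) of Proposition~\ref{prop:lake-1} for $u_\e$ compensates exactly this lack of compactness, turning the product into a two-scale-computable object. Everything else---time compactness of $w_\e$ via Aubin--Lions, identification of $w=\int_QW$, initial trace, and extraction of a subsequence---is routine and parallels the proof of Proposition~\ref{prop:transport-imper0}, as indicated by the authors.
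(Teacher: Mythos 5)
Your proposal follows essentially the same scheme as the paper's proof of Proposition~\ref{prop:transport-imper0}, which the authors explicitly invoke (and skip repeating) for Proposition~\ref{prop:lake-2}: extract a two-scale limit $W$ via time-dependent periodic two-scale compactness, test the weak form of the vorticity transport equation against $\Psi(t,x)\Theta(\tfrac x\e)$, multiply by $\e$ and use the strong corrector description of $u_\e$ from Proposition~\ref{prop:lake-1}(ii) to get the microscopic invariance equation, and set $\Theta\equiv1$ to recover the macroscopic transport equation. This matches the intended argument; one small remark is that the expression~\eqref{eq:def-Rtx-per} in the statement writes $\varphi_i$ while the lake corrector is $\psi_i$, and you correctly use $\psi_i$ throughout.
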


In terms of $W'(t,x,y):=b(y)^{-1}W(t,x,y)$ and $R_{t,x}'(y):=b(y)R_{t,x}(y)$, the invariant measure equation for $W$ in~\eqref{eq:W-syst-lake} reads
\[\Div(W'(t,x,\cdot)R_{t,x}')=0,\qquad \text{in $Q$, for almost all $t,x$}.\]
Note that $R_{t,x}'$ is incompressible and has frequencies $\fint_QR_{t,x}'=\bb u(t,x)=v(t,x)$. Arguing similarly as in the proof of Theorem~\ref{th:imper}, under the non-degeneracy condition~\eqref{eq:non-deg-re-lake}, we then deduce that $W'(t,x,\cdot)$ is constant on $Q$, which means that $W(t,x,\cdot)$ is proportional to $b^{-1}$ on $Q$. Hence, as~\eqref{eq:W-syst-lake} requires $w(t,x)=\int_QW(t,x,\cdot)$, we get
\[W(t,x,y)\,=\,\Big(\frac{b(y)}{\int_Qb}\Big)w(t,x).\]
Inserting this in the equation for $w$ in Proposition~\ref{prop:lake-2}, and combining with Proposition~\ref{prop:lake-1}, this concludes the proof of Theorem~\ref{th:lake}.\qed

\section{Invertibility of corrected harmonic coordinates}\label{sec:harmcoord}

The main result of this section is the invertibility of corrected harmonic coordinates, which was used in the previous sections as the key ingredient to avoid vorticity localization in the homogenization limit.
In the case of uniformly elliptic and bounded coefficient fields, the invertibility of corrected coordinates was established by Alessandrini and Nesi~\cite{Alessandrini-Nesi-01,Alessandrini-Nesi-18} in the periodic setting. Using the (purely qualitative) large-scale Lipschitz estimates of~\cite{MR4103433}, we show that this can be extended to the general stationary random setting. 

\begin{theor}\label{th:diffeo}
Let $\Aa$ be a stationary ergodic random field on $\R^2$ taking values in a set of uniformly elliptic and bounded symmetric matrices, and assume that $\Aa$ is almost surely locally H\"older-continuous. For $1\le i\le2$, denote by $\psi_i$ the unique almost sure weak solution of
\[\Div(\Aa(\nabla\psi_i+e_i))=0,\qquad\text{in $\R^2$},\]
such that $\nabla\psi_i$ is stationary, $\E[\nabla\psi_i]=0$, and with anchoring~$\psi_i(0)=0$.
Set $\psi:=(\psi_1,\psi_2)$.
Then, almost surely, the map $\R^2\to \R^2:x \mapsto x + \psi(x)$ is a  diffeomorphism on $\R^2$.
\end{theor}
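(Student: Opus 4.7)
The plan is to verify that $\Phi(x) := x + \psi(x)$ is (a) a local diffeomorphism at every point and (b) a proper map; then (c) any proper local diffeomorphism $\R^2\to\R^2$ is a covering map onto the simply connected target $\R^2$, hence a global diffeomorphism.

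For (a), the key input is the 2D Alessandrini--Nesi / Bers--Nirenberg rigidity for $\Aa$-harmonic maps: for any pair of $\Aa$-harmonic functions in $\R^2$ (with $\Aa$ uniformly elliptic and locally H\"older-continuous, so the Jacobian is continuous by Schauder), the Jacobian of the associated map has constant sign in each connected component---either identically zero, everywhere positive, or everywhere negative. Applied to $(\Phi_1,\Phi_2)=(x_1+\psi_1,x_2+\psi_2)$, this reduces (a) to pinning down the sign. I would do this by showing $\E[\det\nabla\Phi]=1$: starting from the null-Lagrangian identity $\det\nabla\psi=\partial_1(\psi_1\partial_2\psi_2)-\partial_2(\psi_1\partial_1\psi_2)$ and integrating over $B_R$, one reduces to a boundary integral $\oint_{\partial B_R}\psi_1\partial_\tau\psi_2$, controlled by the sublinearity of $\psi$ combined with stationary $\Ld^2$ bounds on $\nabla\psi$ (after averaging the radius to avoid boundary pathologies); the ergodic theorem then gives $\E[\det(I+\nabla\psi)]=1>0$, which rules out the identically-zero and negative cases, and continuity upgrades a.s. a.e. positivity to strict pointwise positivity everywhere.

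For (b), sublinearity $|\psi(x)|/|x|\to 0$ almost surely as $|x|\to\infty$ immediately yields $|\Phi(x)|\ge|x|-|\psi(x)|\to\infty$. This sublinearity is the standard consequence of the stationarity and mean-zero property of $\nabla\psi$ combined with the ergodic theorem; the large-scale Lipschitz estimates of~\cite{MR4103433} provide the pointwise (as opposed to merely averaged) control needed.

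The chief obstacle is (a): transferring the Alessandrini--Nesi rigidity, proved in~\cite{Alessandrini-Nesi-01,Alessandrini-Nesi-18} in bounded or periodic settings, to the full random plane. A natural implementation is to compare $\psi$ on a large ball $B_R$ with the Dirichlet corrector $\phi^R$ of $\Aa$ on $B_R$ with affine boundary data: the Alessandrini--Nesi theorem applies directly to $\phi^R$ (its boundary values parametrize a convex curve), and the large-scale Lipschitz estimates yield strong closeness of $\nabla\phi^R$ and $e+\nabla\psi$ on interior sub-balls, so the constant-sign property can be transferred to $I+\nabla\psi$ in the limit $R\to\infty$.
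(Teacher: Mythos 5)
Your plan follows the same broad strategy as the paper's proof (approximate the corrector on balls $B_R$ by a Dirichlet corrector, invoke the Alessandrini--Nesi results on bounded domains, and pass to the limit using the large-scale Lipschitz estimates of~\cite{MR4103433}), but the way you state the key rigidity input is incorrect, and the passage-to-the-limit step is glossed over.

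The claim that ``for any pair of $\Aa$-harmonic functions in $\R^2$ the Jacobian of the associated map has constant sign'' is false. Already for $\Aa=\Id$, take $u_1=x_1$ and $u_2=x_1^2-x_2^2$: both are harmonic, but $\det(\nabla u_1,\nabla u_2)=-2x_2$ changes sign. The Stoilow/Bers--Nirenberg factorization $u=h\circ\chi$ with $\chi$ quasiconformal and $h$ harmonic applies to a \emph{single} $\Aa$-harmonic function, not to the pair $(u_1,u_2)$, so it does not give a sign constraint on $\det\nabla(u_1,u_2)$. The constant-sign conclusion for the corrected coordinates is genuinely a consequence of the Rad\'o--Kneser--Choquet--type theorem of Alessandrini--Nesi, which needs the boundary data to parametrize a convex curve (the affine data on $\partial B_R$, or periodicity). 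Consequently, your step (a) cannot be carried out as a local rigidity statement on $\R^2$; one must pass through the bounded-domain version, as you suggest in your last paragraph, and as the paper does. Once you do that, the $\E[\det\nabla\Phi]=1$ argument becomes superfluous: \cite[Theorem~4]{Alessandrini-Nesi-01} already tells you that the Dirichlet corrector $\Phi_R$ is a homeomorphism from $B_R$ onto itself (hence orientation-preserving with no critical points), so there is no sign to pin down.

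The remaining issue is how to transfer this information from $\Phi_R$ to $\Phi$. Your proposal says the large-scale Lipschitz estimates give strong $\Ld^2$ convergence of $\nabla\Phi_R$ to $\nabla\Phi$ on interior balls (correct, and this is the paper's Step~1), and that ``the constant-sign property can be transferred.'' This is the delicate point: $\Ld^2$ convergence of gradients does not by itself propagate pointwise positivity of a continuous Jacobian. What the paper actually uses is a specific stability statement in terms of geometric critical points: by~\cite[Theorem~3]{Alessandrini-Nesi-01}, $\Psi$ is locally univalent on a ball $B$ if every directional component $e\cdot\Psi$ has no geometric critical point in $B$; and by~\cite[Lemma~1]{Alessandrini-Nesi-01}, absence of geometric critical points is stable under $\Ld^2$ convergence of gradients within the class of $\Aa$-harmonic functions (this hinges on the holomorphicity in the Stoilow factorization and the argument principle). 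You would need to invoke this lemma, or an equivalent count of zeros, to make your limit step rigorous.

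On the positive side, your explicit treatment of the globalization is a genuine improvement in clarity. You observe that a local diffeomorphism $\R^2\to\R^2$ is not automatically a global one and supply properness (via sublinearity of $\psi$) plus the covering-space argument onto the simply connected target. The paper's phrase ``a local diffeomorphism $\R^2\to\R^2$ is necessarily global'' tacitly uses properness, which is exactly what you make explicit; that step is correct and welcome.
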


\begin{proof}
We split the proof into two steps. In the first step, we show that one can approximate the map $\Psi(x):=x+\psi(x)$ on bounded domains using qualitative large-scale Lipschitz regularity. In the second step, we then use this approximation result to extend the results by Alessandrini and Nesi~\cite{Alessandrini-Nesi-01,Alessandrini-Nesi-18} from bounded domains (or periodic coefficients) to the whole space (or random coefficients).

\medskip
\step1 Approximation of $\psi_i$ on bounded domains.\\
Denoting by $B_R$ the ball of size $R>0$ centered at the origin in $\R^2$, we approximate the corrector~$\psi_i$ by the Dirichlet corrector $\psi_{R,i}$ defined as the unique almost sure weak solution in $H^1_0(B_R)$ of
\[\Div(\Aa(\nabla\psi_{R,i}+e_i))=0.\]
We shall show that $\nabla \psi_{R,i}$ converges strongly to $\nabla\psi_i$ in $\Ld^2_\loc(\R^2)$ almost surely. We split the proof into two further substeps.

\medskip
\substep{1.1}
Proof that almost surely
\begin{equation}\label{eq:corr-approx-R}
\lim_{R \uparrow\infty} \fint_{B_{R/2}} | \nabla (\psi_i-\psi_{R,i})|^2=0.
\end{equation}
For $R,K\ge 2$, consider a cut-off function $\eta_{R,K}$ such that
\[\eta_{R,K} = 1~~\text{on $B_{R(1-1/K)}$},\qquad \eta_{R,K}=0~~\text{on $\partial B_R$},\qquad\text{and}\qquad |\nabla \eta_{R,K}|\lesssim KR^{-1}.\]
A direct calculation shows that the function $\psi_{R,K,i}:=\eta_{R,K} \psi_i-\psi_{R,i}$ satisfies in $B_R$,
\[
-\Div(\Aa\nabla \psi_{R,K,i})\,=\,-\Div (\Aa \psi_i \nabla \eta_{R,K})-\Div( (1-\eta_{R,K})\Aa e_i)+\Div((1-\eta_{R,K})\Aa(\nabla \psi_i+e_i)).
\]
Testing with $\psi_{R,K,i} \in H^1_0(B_R)$ yields
\begin{eqnarray*}
\int_{B_R} |\nabla \psi_{R,K,i}|^2\,\lesssim\, K^2 \int_{B_R} R^{-2}|\psi_i|^2  + \int_{B_R \setminus B_{R(1-1/K)}}   (|\nabla \psi_i|^2+1).
\end{eqnarray*}
By the sublinearity of $\psi_i$ at infinity, we have almost surely
\[
\limsup_{R\uparrow\infty} \fint_{B_R} R^{-2}|\psi_i|^2=0.
\]
By the ergodic theorem, we also have almost surely
\[
\lim_{R\uparrow\infty}\fint_{B_R} (|\nabla \psi_i|^2+1) = \expec{|\nabla \psi_i|^2+1},
\]
and thus
\begin{eqnarray*}
\lim_{R\uparrow\infty} |B_R|^{-1} \int_{B_R \setminus B_{R(1-1/K)}}   (|\nabla \psi_i|^2+1)
&=&  (1-(1-\tfrac1K)^2) \expec{|\nabla \psi_i|^2+1}\\
&\lesssim& \tfrac1K\expec{|\nabla \psi_i|^2+1}.
\end{eqnarray*}
This entails the almost sure convergence
\[
\limsup_{K\uparrow\infty} \limsup_{R\uparrow\infty} \fint_{B_R} | \nabla \psi_{R,K,i}|^2\,=\,0,
\]
from which the claim~\eqref{eq:corr-approx-R} follows.

\medskip
\substep{1.2} Proof that $\nabla \psi_{R,i}\to\nabla \psi_i$ in $\Ld^2_\loc(\R^2)$ almost surely.\\
By \cite[Theorem~1]{MR4103433}, there exist an almost surely finite random radius $r_*\ge 1$ and a deterministic constant $C<\infty$ with the following property: almost surely, for all $r\ge \rho \ge r_*$ and all functions $w \in H^1(B_r)$ that satisfy $\Div(\Aa\nabla w)=0$, we have 
\[
\fint_{B_\rho} |\nabla w|^2 \le C \fint_{B_r} |\nabla w|^2.
\]
Applying this to $w=\psi_{R,i}-\psi_i$ on $B_R$, and recalling~\eqref{eq:corr-approx-R}, we deduce for all $\rho \ge r_*$,
\[
\lim_{R \uparrow\infty} \int_{B_\rho} |\nabla (\psi_{R,i}-\psi_i)|^2=0,
\]
and the claim follows.

\medskip
\step2 Conclusion.\\
We aim to show that $\Psi(x)=x+\psi(x)$ is a diffeomorphism on $\R^2$.
First note that it is enough to show that it is a local homeomorphism. Indeed, by~\cite[Theorem~1.1]{Alessandrini-Nesi-18}, as $\Psi$ is $\Aa$-harmonic and as $\Aa$ is almost surely locally H\"older-continuous, the local univalence of~$\Psi$  implies that it is a local diffeomorphism. Since a local diffeomorphism $\R^2 \to \R^2$ is necessarily global, this  concludes the argument.

It remains to argue that $\Psi$ is locally univalent.
To this aim, we need to recall a few results from complex analysis (for which we refer to \cite{Alessandrini-Nesi-01} and references therein).
Let $B$ be a ball centered at $0$ in $\R^2$, let $B_+$ be an open set containing $\overline B$, and let $u \in H^1(B_+)$ be $\Aa$-harmonic in~$B_+$. Then, there exists a unique quasi-conformal map $\chi$ and a unique holomorphic function $H$ such that $u= h \circ \chi$ on $B$, where~$h$ denotes the real part of~$H$. We say that a point $z \in B$ is a geometric critical point of $u$ if $\nabla h (\chi(z))=0$. Geometric critical points are   isolated. 
First, we recall the following result relating the non-existence of critical points and the local univalence of $\Psi$, cf.~\cite[Theorem~3]{Alessandrini-Nesi-01}:
\begin{enumerate}[\qquad]
\item If for all $e \in \R^2\setminus\{0\}$ the $\Aa$-harmonic function $x \mapsto e\cdot \Psi(x)$ has no geometric critical point in $B$, then $\Psi$ is locally univalent on $B$.
\end{enumerate}
Second, we recall a result that allows to estimate the number of geometric critical points by approximation (a direct consequence of~\cite[Lemma~1]{Alessandrini-Nesi-01}):
\begin{enumerate}[\qquad]
\item Let $u_n$ be a sequence of $\Aa$-harmonic functions in $B$ and $u$ be $\Aa$-harmonic in $B$ such that $\nabla u_n \to \nabla u$ in $\Ld^2(B)$. If for all $n$ the function $u_n$ has no geometric critical point in $B$, then the limit~$u$ has no geometric critical point in $B$ either.
\end{enumerate}
The final argument goes as follows.
For all $R>0$, \cite[Theorem~4]{Alessandrini-Nesi-01} entails that $\Psi_R$ is a homeomorphism from $B_R$ to itself. By \cite[Theorem~3]{Alessandrini-Nesi-01}, this implies that $\Psi_R$ has no geometric critical point in any ball $B\subset B_R$. By Step~1, $\nabla \Psi_R$ converges to $\nabla \Psi$ in $\Ld^2(B)$. Hence, by the above, $\Psi$ has no geometric critical point in $B$, and it is therefore locally univalent on~$B$. Since the radius of $B$ is arbitrary, $\Psi$ is locally univalent on $\R^2$, which concludes the proof.
\end{proof}

In order to deal with corrected coordinates associated to the Euler equations in a porous medium, we need to weaken the uniform ellipticity assumptions on the coefficient field $\Aa$ and to allow it to be infinite on a stationary set of inclusions:  the corrector equations  take form of an elliptic problem with rigid inclusions in that case, cf.~\eqref{eq:varphi-imperm}. This extension is the object of the following result.

\begin{theor}\label{th:diffeo-rigid}
Let $\{I_n\}_n$  be a random collection of subsets of $\R^2$ such that the random set $\mathcal I=\bigcup_nI_n$ is stationary and ergodic, and such that for some $\rho>0$ it almost surely satisfies the same regularity and hardcore conditions as in Theorem~\ref{th:imper}.
For $1\le i\le2$, define the corrector $\varphi_i\in H^1_\loc(\R^2;\Ld^2(\Omega))$ as the unique almost sure weak solution of~\eqref{eq:varphi-imperm}, that is,
\begin{equation}\label{eq:varphi-imperm-re}
\left\{\begin{array}{ll}
-\triangle\varphi_i=0,&\text{in $\R^2\setminus\Ic$},\\
\nabla\varphi_i+e_i=0,&\text{in $\Ic$},\\
\int_{\partial I_n}\partial_\nu\varphi_i=0,&\text{for all $n$},
\end{array}\right.
\end{equation}
such that $\nabla\varphi_i$ is stationary, $\E[\nabla\varphi_i]=0$, and with anchoring~$\varphi_i(0)=0$; see e.g.~\cite[Section~8.6]{JKO94}.
Set $\varphi:=(\varphi_1,\varphi_2)$.
Then, almost surely, the map $\R^2\setminus \mathcal I\to \R^2:x \mapsto x + \varphi(x)$ is a   diffeomorphism onto its image.
\end{theor}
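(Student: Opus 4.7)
The plan is to mimic the strategy of Theorem~\ref{th:diffeo}: approximate $\varphi$ on bounded balls by a Dirichlet corrector, show almost sure strong $\Ld^2_\loc$ convergence of gradients, and then transfer the univalence results of Alessandrini and Nesi from the bounded-domain setting to the random whole-space setting by approximation. The main new difficulty compared to Theorem~\ref{th:diffeo} is that the coefficient field is now degenerate (rigid) on $\Ic$, which forces us to work with a homogenization problem with stiff/rigid inclusions instead of a uniformly elliptic one.

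\emph{Step 1 (Dirichlet approximation).} For $R>0$, define $\varphi_{R,i} \in H^1_0(B_R)$ as the unique weak solution of the rigid-inclusion problem $-\triangle \varphi_{R,i}=0$ in $B_R\setminus\Ic$, $\nabla\varphi_{R,i}+e_i=0$ in $B_R\cap\Ic$, together with the flux-balance $\int_{\partial I_n}\partial_\nu\varphi_{R,i}=0$ for every inclusion~$I_n\Subset B_R$. Using a cut-off $\eta_{R,K}$ as in Substep~1.1 of the proof of Theorem~\ref{th:diffeo}, testing the equation satisfied by $\eta_{R,K}\varphi_i-\varphi_{R,i}$ against itself, and invoking the almost sure sublinearity of $\varphi_i$ together with the ergodic theorem applied to $|\nabla\varphi_i|^2$, we would obtain
\[
\lim_{R\uparrow\infty} \fint_{B_{R/2}} |\nabla(\varphi_i-\varphi_{R,i})|^2 \,=\, 0.
\]
To upgrade this to the pointwise $\Ld^2_\loc$ convergence $\nabla\varphi_{R,i}\to\nabla\varphi_i$, we need a large-scale Lipschitz estimate in the sense of~\cite{MR4103433} applied to the rigid-inclusion operator. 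Under the hardcore and regularity conditions assumed on $\{I_n\}_n$, such a qualitative large-scale regularity result is available for stochastic homogenization with stiff inclusions (as used in the authors' earlier work on suspensions and porous media).

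\emph{Step 2 (Univalence via Alessandrini--Nesi).} Observe that $\Psi(x):=x+\varphi(x)$ is harmonic in $\R^2\setminus\Ic$ and collapses each inclusion $I_n$ to a single point (since $\nabla\Psi=0$ on $\Ic$). For the Dirichlet approximation $\Psi_R:=x+\varphi_R$, the boundary data is the identity on $\partial B_R$, so \cite[Theorem~4]{Alessandrini-Nesi-01} (whose argument, based on the maximum principle for affine combinations $e\cdot\Psi_R$, carries over to the rigid-inclusion setting since $e\cdot\Psi_R$ is still $\triangle$-harmonic in $B_R\setminus\Ic$ with impermeability-type boundary data on $\partial\Ic$) implies that $\Psi_R$ is a homeomorphism from $B_R$ onto itself, hence univalent on $B_R\setminus\Ic$. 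Consequently, by \cite[Theorem~3]{Alessandrini-Nesi-01}, for every $e\in\R^2\setminus\{0\}$ the function $e\cdot\Psi_R$ has no geometric critical point in any subball $B\Subset B_R\setminus\Ic$. Combining Step~1 with the approximation lemma~\cite[Lemma~1]{Alessandrini-Nesi-01} passes this property to $e\cdot\Psi$, and then \cite[Theorem~1.1]{Alessandrini-Nesi-18} together with Schauder regularity (standard since $\Psi$ is harmonic in $\R^2\setminus\Ic$ and the inclusions are $C^{1,\alpha}$ by the interior/exterior ball condition) yields that $\Psi$ is a local diffeomorphism on $\R^2\setminus\Ic$.

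\emph{Step 3 (Globalization).} A local diffeomorphism on the multiply connected set $\R^2\setminus\Ic$ need not be globally injective, so one cannot directly invoke the simple ``local $\Rightarrow$ global'' argument used in Theorem~\ref{th:diffeo}. Instead, injectivity on $\R^2\setminus\Ic$ can be obtained by lifting: the almost sure injectivity of each $\Psi_R|_{B_R\setminus\Ic}$ (Step~2) together with the local convergence of Step~1 and a winding/degree argument around small loops enclosing each inclusion should yield global injectivity of $\Psi$ on $\R^2\setminus\Ic$. The image is then $\R^2\setminus\Psi(\Ic)$, with $\Psi(\Ic)$ a locally finite collection of points.

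\emph{Main obstacle.} The main technical point is the large-scale Lipschitz estimate for the rigid-inclusion problem in the stationary random setting invoked in Step~1; the other steps are essentially direct analogs of the uniformly elliptic case, modulo verifying that the Alessandrini--Nesi machinery accommodates the degeneracy on $\Ic$ (which it does, since that machinery ultimately relies only on the maximum principle and on the harmonic structure in $\R^2\setminus\Ic$).
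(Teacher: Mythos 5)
Your overall plan (Dirichlet approximation, large-scale Lipschitz regularity, Alessandrini--Nesi, globalization) correctly mirrors the structure of the paper's argument, and you rightly flag the large-scale Lipschitz estimate for the rigid-inclusion operator as a technical ingredient; the paper invokes the Stokes-with-rigid-inclusions version of~\cite{MR4468195} for exactly this purpose. However, there are two genuine gaps.

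First and most importantly, in Step~2 you assert that the Alessandrini--Nesi results ``carry over to the rigid-inclusion setting since $e\cdot\Psi_R$ is still $\triangle$-harmonic in $B_R\setminus\Ic$ with impermeability-type boundary data.'' This is precisely the step that needs a proof, not an assertion: Theorems~3 and~4 of~\cite{Alessandrini-Nesi-01} and Theorem~1.1 of~\cite{Alessandrini-Nesi-18} are stated and proved for uniformly elliptic coefficient fields, and the rigid-inclusion problem is degenerate (the ``conductivity'' is infinite on $\Ic$, the solution is constant on each $I_n$, and there is a nonlocal flux-balance constraint on $\partial I_n$). The paper resolves this by introducing an intermediate approximation: it replaces the rigid inclusions by inclusions of large but finite conductivity $K$, defines $\varphi_{R,K,i}$ as the solution of a genuinely uniformly elliptic problem with coefficient $\Aa_{R,K}=\Id+K\Id\sum_{n\in N_R}\chi_{K,n}$, applies Alessandrini--Nesi to $\Phi_{R,K}$ (where it applies verbatim), and then uses a $\Gamma$-convergence argument to prove $\varphi_{R,K,i}\to\varphi_{R,i}$ strongly in $H^1_0(B_R)$ and, by interior elliptic regularity, in $C^1(\overline{B_R}\setminus\Ic_R)$; this $C^1$ convergence is what transfers the absence of geometric critical points from $\Phi_{R,K}$ to $\Phi_R$. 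Omitting the $K$-approximation leaves the key step unproved; if you wish to avoid it, you would need to independently establish a Rad\'o--Kneser--Choquet theorem for the degenerate rigid-inclusion problem.

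Second, your Step~3 is only a sketch. You correctly note that ``local diffeomorphism on a multiply connected domain $\Rightarrow$ global'' fails (this is exactly why the statement concludes ``diffeomorphism onto its image'' rather than ``onto $\R^2$''), but a ``winding/degree argument around small loops'' is not a proof. The paper instead exploits the change-of-variable formula for $\Phi_{R,K}$, which is a diffeomorphism of $B_R$ onto itself: passing to the limit first in $K$ (using the $C^1$ convergence up to $\partial\Ic_R$) and then in $R$ (using the a.s.\ $C^1_\loc(\R^2\setminus\Ic)$ convergence), one obtains the change-of-variable identity for $\Phi$ on $\R^2\setminus\Ic$, and a contradiction with the area formula (\cite[Section~3.3.2, Theorem~1]{MR3409135}) then rules out the existence of two distinct preimages. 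This argument again uses the $K$-approximation in an essential way. Finally, your parenthetical claim that the image is $\R^2\setminus\Psi(\Ic)$ with $\Psi(\Ic)$ a locally finite set of points is neither needed for the theorem nor proved in your sketch; in particular, surjectivity onto the punctured plane does not follow from local-diffeomorphism plus injectivity alone.
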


\begin{proof}
Similarly as for Theorem~\ref{th:diffeo}, the proof is a post-processing of the work of Alessandrini and Nesi~\cite{Alessandrini-Nesi-01,Alessandrini-Nesi-18}. More precisely, it involves two approximation arguments, combined with an additional short argument for  injectivity.
We split the proof into three steps.

\medskip
\step1 Approximation of $\varphi_i$ on bounded domains with relaxed stiffness.\\
For $R\ge 1$, setting $N_R:=\{n : I_n \subset B_R,\,\dist(I_n,\partial B_R)\ge 1\}$, we approximate the corrector~$\varphi_{i}$ by the Dirichlet corrector $\varphi_{R,i}$ defined as the unique almost sure weak solution in~$H^1_0(B_R)$ of
\begin{equation}\label{eq:varphi-imperm-re-R}
\left\{\begin{array}{ll}
-\triangle\varphi_{R,i}=0,&\text{in $B_R\setminus\cup_{n\in N_R}I_n$},\\
\nabla\varphi_{R,i}+e_i=0,&\text{in $\cup_{n\in N_R}I_n$},\\
\int_{\partial I_n}\partial_\nu\varphi_{R,i}=0,&\text{for all $n\in N_R$}.
\end{array}\right.
\end{equation}
We further introduce another approximation, which amounts to replacing stiff inclusions by inclusions with diverging conductivity.
For $K\ge 1$, due to the uniform regularity assumption for the inclusions,
we can choose for all $n$ a smooth cut-off function $\chi_{K,n}:\R^2 \to [0,1]$ for $I_n$ in~$\R^2$ such that
\begin{gather*}
\text{$\chi_{K,n}(x)=1$ for all $x\in I_n$ with $\dist(x,\partial I_n)\ge\tfrac1K$,}\\
\text{$\chi_{K,n}|_{\R^2 \setminus I_n}=0$, and $|\nabla \chi_{K,n}|\lesssim K$.}
\end{gather*}
Note that those cut-off functions can be chosen such that $\chi_{K,n}$ is increasing wrt $K$.
We then consider the coefficient field
\[\Aa_{R,K}:=\Id + K\Id \sum_{n \in N_R} \chi_{K,n}.\]
and for $1\le i\le2$ we define the corrector $\varphi_{R,K,i}$ as the unique almost sure weak solution in $H^1_0(B_R)$ of
\[\Div(\Aa_{R,K}(\nabla\varphi_{R,K,i}+e_i))=0.\]
We shall show that $\nabla \varphi_{R,K,i}$ converges strongly to $\nabla\varphi_{i}$ in $\Ld^2_\loc(\R^2)$ as $R,K\uparrow\infty$ almost surely. We split the proof into three further substeps.

\medskip
\substep{1.1} Proof that $\varphi_{R,K,i}\to \varphi_{R,i}$ in $H^1_0(B_R)$ and in $C^1(\overline{B_R}\setminus\Ic_R)$ as $K\uparrow\infty$.\\
We appeal to a $\Gamma$-convergence argument. Consider the integral functional
\[J_{R,K}(u):=\int_{B_R} \nabla u \cdot \Aa_{R,K} \nabla u,\qquad u\in H^1(B_R).\]
Since $J_{R,K}$ is increasing wrt $K$, it $\Gamma$-converges as $K\uparrow\infty$ (wrt to the weak topology of~$H^1(B_R)$)
to the integral functional defined by
\[J_{R}(u)\,:=\, \lim_{K\uparrow\infty} J_{R,K}(u),\qquad u\in H^1(B_R).\]
By definition of $\Aa_{R,K}$, we actually find, for $u\in H^1(B_R)$,
\[J_R(u)\,=\,\left\{\begin{array}{lll}
\int_{B_R\setminus\Ic_R}|\nabla u|^2&:&\nabla u|_{\Ic_R}=0,\\
\infty&:&\nabla u|_{\Ic_R}\ne0.
\end{array}\right.\]
In particular, the $\Gamma$-convergence result entails that the minimal energy
\[E_{R,K,i}\,:=\,\inf\{ J_{R,K} (u) \,:\, u(x)= x_i+v(x), \, v\in H^1_0(B_R)\}\]
converges to
\[E_{R,i}\,:=\,\inf\{ J_{R} (u) \,:\, u(x)= x_i+v(x), \,v\in H^1_0(B_R)\},\]
and that the corresponding minimizers converge wrt the weak $H^1$ topology.
By definition, the (unique) minimizers for $E_{R,K,i}$ and $E_{R,i}$ coincide with the correctors~$\varphi_{R,K,i}$ and~$\varphi_{R,i}$, respectively, so that we deduce $\varphi_{R,K,i}\cvf\varphi_{R,i}$ in $H^1_0(B_R)$.
In order to prove strong convergence, we start with the following computation, using $(\nabla\varphi_{R,i}+e_i)|_{\Ic_R}=0$,
\begin{eqnarray*}
\lefteqn{\int_{B_R} |\nabla (\varphi_{R,K,i}-\varphi_{R,i})|^2 ~\le ~\int_{B_R} \nabla (\varphi_{R,K,i}-\varphi_{R,i}) \cdot \Aa_{R,K} \nabla (\varphi_{R,K,i}-\varphi_{R,i})}
\\
&\qquad\qquad=& \int_{B_R} (\nabla\varphi_{R,K,i}+e_i)\cdot \Aa_{R,K}(\nabla\varphi_{R,K,i}+e_i)
+ \int_{B_R \setminus \mathcal I_R} |\nabla \varphi_{R,i}+e_i|^2
\\
&\qquad\qquad&-
2\int_{B_R \setminus \mathcal I_R} (\nabla \varphi_{R,i}+e_i) \cdot (\nabla \varphi_{R,K,i}+e_i).
\\
&\qquad\qquad=&E_{R,K,i}+E_{R,i} - 2\int_{B_R \setminus \mathcal I_R} (\nabla \varphi_{R,i}+e_i) \cdot (\nabla \varphi_{R,K,i}+e_i).
\end{eqnarray*}
Since $\nabla \varphi_{KRi} \cvf \nabla \varphi_{Ri}$ in $\Ld^2(B_R)$ and since minimal energies converge, this entails 
\[\limsup_{K \uparrow\infty} \int_{B_R} |\nabla (\varphi_{R,K,i}-\varphi_{R,i})|^2
\,\le\, 2E_{R,i}-2 \int_{B_R \setminus \mathcal I_R} |\nabla \varphi_{R,i}+e_i|^2\,=\,0,\]
which proves the strong convergence $\varphi_{R,K,i}\to \varphi_{R,i}$ in $H^1_0(B_R)$.
In addition, by elliptic regularity on $B_R \setminus\mathcal I_R$ up to the boundary, since $\varphi_{R,K,i}-\varphi_{R,i}$ is harmonic in $B_R\setminus\Ic_R$, the strong convergence in $H^1_0(B_R)$ actually implies convergence in $C^1(\overline{B_R}\setminus\mathcal I_R)$.

\medskip
\substep{1.2} Proof that $\nabla\varphi_{R,i}\to\nabla\varphi_i$ in $\Ld^2_\loc(\R^2)$ and $\varphi_{R,i}-\varphi_{R,i}(0)\to\varphi_i$ in $C^1_\loc(\R^2\setminus\Ic)$ as $R\uparrow\infty$ almost surely.\\
We proceed as in the proof of Theorem~\ref{th:diffeo}.
For $R,K\gg1$, let $\eta_{R,K}$ be a cut-off function for $B_{R(1-1/K)}$ in $B_R$, that is, such that
\begin{gather*}
\eta_{R,K}=1~~\text{on $B_{R(1-1/K)}$},\qquad \eta_{R,K}=0~~\text{on $\partial B_R$},\qquad |\nabla \eta_{R,K}|\lesssim KR^{-1},\\
\text{and $\eta_{R,K}$ is constant in $I_n^+:=I_n+\tfrac14\rho B$, for all $n$,}
\end{gather*}
where we recall that $\rho$ stands for the constant in the hardcore assumption, $\dist(I_n,I_m)\ge\rho$ almost surely for all $n\ne m$.
As~$\varphi_{R,i}$ satisfies equation~\eqref{eq:varphi-imperm-re-R}, we note that it satisfies the following relation in the weak sense on~$B_R$,
\[-\triangle \varphi_{R,i} = -\sum_{n \in N_R} \delta_{\partial I_n} (\nabla \varphi_{R,i}+e)\cdot \nu.\]
Noting that a similar relation holds for $\varphi_i$, a direct calculation then shows that the difference $\xi_{R,K,i}:=\eta_{R,K} \varphi_i-\varphi_{R,i}$ satisfies in the weak sense on $B_R$,
\begin{multline*}
-\triangle \xi_{R,K,i}\,=\,-\nabla \cdot (\varphi_i \nabla \eta_{R,K})+\nabla \cdot ((1-\eta_{R,K}) \nabla \varphi_i)\\
-\sum_{n \in N_R} \delta_{\partial I_n} \partial_\nu (\varphi_i-\varphi_{R,i})- \sum_{n \notin N_R}\delta_{\partial I_n \cap B_R} (\nabla \varphi_i+e_i)\cdot \nu.
\end{multline*}
Testing this equation with $\xi_{R,K,i} \in H^1_0(B_R)$, we get
\begin{multline}\label{eq:estim-psi-RKi}
\int_{B_R} |\nabla \xi_{R,K,i}|^2\,\lesssim\, K^2 \int_{B_R} R^{-2}|\varphi_i|^2  + \int_{B_R \setminus B_{R(1-1/K)}}|\nabla \varphi_i|^2\\
+\sum_{n\in N_R}\Big|\int_{\partial I_n}\xi_{R,K,i}\,\partial_\nu(\varphi_i-\varphi_{R,i})\Big|
+\sum_{n \notin N_R} \Big| \int_{\partial I_n \cap B_R} \xi_{R,K,i} (\nabla \varphi_i+e_i)\cdot \nu\Big|.
\end{multline}
The last two right-hand side terms require further investigation and we start with the first one.
First recall that the hardcore assumption ensures that the enlarged inclusions $I_n^+=I_n+\frac14\rho B$ are pairwise disjoint, which entails in particular that $\varphi_i-\varphi_{R,i}$ is harmonic in $I_n^+\setminus I_n$. Also recall that by definition $\xi_{R,K,i}-(1-\eta_{R,K})(x_i-\fint_{I_n^+}x_i)$ is constant on $I_n$ and that we have
$\int_{\partial I_n} \partial_\nu  (\varphi_i-\varphi_{R,i})=0$ for all $n\in N_R$. 
Integrating by parts and recalling that $\eta_{R,K}$ is chosen to be constant in $I_n^+$, we then find for $n\in N_R$,
\begin{eqnarray*}
\int_{\partial I_n}\xi_{R,K,i}\,\partial_\nu(\varphi_i-\varphi_{R,i})
&=&\int_{\partial I_n}(1-\eta_{R,K})\Big(x_i-\fint_{I_n^+}x_i\Big)\,\partial_\nu(\varphi_i-\varphi_{R,i})\\
&=&-\int_{I_n^+\setminus I_n}(1-\eta_{R,K})\nabla\Big(\Big(x_i-\fint_{I_n^+}x_i\Big)\chi_n\Big)\cdot\nabla(\varphi_i-\varphi_{R,i}),
\end{eqnarray*}
where $\chi_n$ is any smooth cut-off function for $I_n$ in $I_n^+$, that is, such that
\[\chi_n=1~~\text{in $I_n$},\qquad\chi_n=0~~\text{outside $I_n^+$},\qquad \text{and}\qquad|\nabla\chi_n|\lesssim1.\]
Hence, for $n\in N_R$,
\[\Big|\int_{\partial I_n}\xi_{R,K,i}\,\partial_\nu(\varphi_i-\varphi_{R,i})\Big|
\,\lesssim\,\int_{I_n^+\cap(B_R\setminus B_{R(1-1/K)})}|\nabla(\varphi_i-\varphi_{R,i})|,\]
and thus, decomposing $\varphi_i-\varphi_{R,i}=(1-\eta_{R,K})\varphi_i+\xi_{R,K,i}$,
\begin{multline}\label{eq:estim-psi-RKi-rhs1}
\Big|\int_{\partial I_n}\xi_{R,K,i}\,\partial_\nu(\varphi_i-\varphi_{R,i})\Big|
\,\lesssim\,
|I_n^+\cap(B_R\setminus B_{R(1-1/K)})|^\frac12\\
\times\bigg(\int_{I_n^+\cap(B_R\setminus B_{R(1-1/K)})}|\nabla\varphi_i|^2+\int_{I_n^+\cap B_R}|\nabla\xi_{R,K,i}|^2+K^2R^{-2}|\varphi_i|^2\bigg)^\frac12.
\end{multline}
We turn the last right-hand side term in~\eqref{eq:estim-psi-RKi}.
Since $\xi_{R,K,i}=0$ on $\partial B_R$, we may implicitly extend it by $0$ outside $B_R$.
As $\int_{\partial I_n}(\nabla\varphi_i+e_i)\cdot\nu=0$ and as $\varphi_i$ is harmonic in $I_n^+\setminus I_n$, we may then compute
\begin{eqnarray*}
\int_{\partial I_n \cap B_R} \xi_{R,K,i} (\nabla \varphi_i+e_i)\cdot \nu
&=&\int_{\partial I_n} \Big(\xi_{R,K,i}-\fint_{I_n^+}\xi_{R,K,i}\Big) (\nabla \varphi_i+e_i)\cdot \nu\\
&=&-\int_{I_n^+\setminus I_n} \nabla\Big(\Big(\xi_{R,K,i}-\fint_{I_n^+}\xi_{R,K,i}\Big)\chi_n\Big) \cdot(\nabla \varphi_i+e_i),
\end{eqnarray*}
where $\chi_n$ is as above.
Hence, by the Cauchy--Schwarz inequality and by Poincar\'e's inequality in $I_n^+$, we deduce
\begin{equation*}
\Big|\int_{\partial I_n \cap B_R} \xi_{R,K,i} (\nabla \varphi_i+e_i)\cdot \nu\Big|
\,\lesssim\,\Big(\int_{I_n^+\cap B_R} |\nabla\xi_{R,K,i}|^2\Big)^\frac12\Big(\int_{I_n^+}|\nabla \varphi_i|^2+1\Big)^\frac12.
\end{equation*}
Inserting this together with~\eqref{eq:estim-psi-RKi-rhs1} into~\eqref{eq:estim-psi-RKi}, recalling that the enlarged inclusions $\{I_n^+\}_n$ are disjoint, and using Young's inequality to absorb part of the right-hand side of \eqref{eq:estim-psi-RKi-rhs1} into the left-hand side of \eqref{eq:estim-psi-RKi}, we get
\begin{multline}
\int_{B_R} |\nabla \xi_{R,K,i}|^2\,\lesssim\, K^2 \int_{B_R} R^{-2}|\varphi_i|^2  + \int_{B_R \setminus B_{R(1-1/K)}}(|\nabla \varphi_i|^2+1)\\
+\sum_{n \notin N_R} \int_{I_n^+}(|\nabla \varphi_i|^2+1).
\end{multline}
Now arguing similarly as in Step~1 of the proof of Theorem~\ref{th:diffeo}, this implies almost surely 
\[\limsup_{K\uparrow\infty}\limsup_{R\uparrow\infty}\fint_{B_R}|\nabla\xi_{R,K,i}|^2\,=\,0,\]
and thus, by definition of $\xi_{R,K,i}$,
\[\lim_{R\uparrow\infty}\fint_{B_{R/2}}  |\nabla (\varphi_i-\varphi_{R,i})|^2\,=\,0.\]
Next, we appeal to the qualitative large-scale Lipschitz regularity for rigid inclusions
of~\cite[Theorem~2]{MR4468195} (the latter is for the  Stokes equation with rigid inclusions, but the present scalar case with stiff inclusions is similar and simpler to prove). As above, this entails that there exists an almost surely finite random radius $r_*\ge1$ such that for all $\rho\ge r_*$,
\[\lim_{R\uparrow\infty}\int_{B_{\rho}}  |\nabla (\varphi_i-\varphi_{R,i})|^2\,=\,0.\]
This proves the strong convergence $\nabla\varphi_{R,i}\to\nabla\varphi_i$ in $\Ld^2_\loc(\R^2)$ almost surely.
By elliptic regularity on $\R^2\setminus \Ic$ up to the boundary, since $\varphi_{R,i}-\varphi_i$ is harmonic in $\R^2\setminus\Ic$, this actually implies convergence $\varphi_{R,i}-\varphi_{R,i}(0)\to\varphi_i$ in $C^1_\loc(\R^2\setminus\Ic)$.

\medskip
\step{2} Proof that $\Phi(x):=x+\varphi(x)$ is a local diffeomorphism on $\R^2\setminus\mathcal I$ almost surely.\\
We appeal to the approximation devised in Step~1: consider the approximate corrected coordinates
\[\Phi_{R}(x)=x+\varphi_{R}(x)-\varphi_{R}(0),\qquad\Phi_{R,K}(x)=x+\varphi_{R,K}(x)-\varphi_{R,K}(0),\qquad\text{on $B_R$,}\]
with $\varphi_{R}=(\varphi_{R,1},\varphi_{R,2})$ and $\varphi_{R,K}=(\varphi_{R,K,1},\varphi_{R,K,2})$.
By~\cite[Theorem~4]{Alessandrini-Nesi-01} and \cite[Theorem~1.1]{Alessandrini-Nesi-18}, since the coefficient field $\Aa_{R,K}$ is uniformly elliptic, bounded, and H\"older-continuous, the map $\Phi_{R,K}$ is known to be a diffeomorphism from $B_R$ to itself.
Since $\Phi_{R,K}$ and $\Phi_R$ are both harmonic on $B_R\setminus\mathcal I_R$, they are given by the real part of a holomorphic function on that domain. Since the number of critical points of a holomorphic function in a domain is given by a contour integral involving its gradient, it is continuous with respect to $C^1$ convergence. As $\Phi_{R,K}$ converges to $\Phi_R$ in $C^1_b(B_R \setminus\Ic_R)$, cf.~Step~1.1, the non-existence of critical points of  $\Phi_{R,K}$ in~$B_R$ entails the non-existence of critical points of $\Phi_R$ in $B_R \setminus\Ic_R$. As an harmonic function without critical points, $\Phi_R$ is a local diffeomorphism on $B_R \setminus\Ic_R$.
Similarly, using the convergence of $\Phi_R$ to $\Phi$ almost surely, cf.~Step~1.2, we can conclude that $\Phi$ is a local diffeomorphism almost surely on $\R^2 \setminus \Ic$.

\medskip
\step3 Almost sure injectivity of $\Phi$ on $\R^2\setminus\Ic$.\\
Since the map $\Phi_{R,K}$ is a diffeomorphism from $B_R$ to itself by Step~2, we have the following change-of-variable formula: for all measurable subsets $E\subset B_R$ with $\dist(E,\partial B_R)\ge2$ and all locally integrable functions~$f$,
\begin{equation*}
\int_{\Phi_{R,K}(E \setminus \Ic_R)} f(y)\,dy \,=\,\int_{E\setminus\Ic_R} f(\Phi_{R,K}(x))\, |\!\det \nabla \Phi_{R,K}(x)|\,dx.
\end{equation*}
Passing to the limit in this identity, using the convergence $\varphi_{R,K,i}\to \varphi_{R,i}$ in $C^1_b({B_R}\setminus\Ic_R)$ as $K\uparrow\infty$, cf.~Step~1.1, and the convergence $\varphi_{R,i}-\varphi_{R,i}(0)\to \varphi_{i}$ in $C^1_\loc(\R^2\setminus\Ic)$ as $R\uparrow\infty$ almost surely, cf.~Step~1.2, we can deduce almost surely, for all measurable subsets $E\subset B_R$ with $\dist(E,\partial B_R)\ge2$ and all locally integrable functions~$f$,
\begin{equation}\label{e.chg-var++}
\int_{\Phi(E\setminus \mathcal I)} f(y)\,dy \,=\,\int_{E\setminus\mathcal I} f(\Phi (x))\, |\!\det \nabla \Phi(x)|\,dx.
\end{equation}
This  implies the  injectivity of $\Phi$ on $\R^2 \setminus \mathcal I$.
Indeed, assume that there exist $x,y \in \R^2 \setminus \Ic$, $x\ne y$, such that $\Phi(x)=\Phi(y)=z$. Since by Step~2 the map $\Phi$ is a local diffeormorphism both at $x$ and at $y$, there exists a ball $B_\e(z)$ of radius $\e>0$ centered at $z$ such that every point of $B_\e(z)$ has at least two antecedents in $\R^2 \setminus \mathcal I$ via $\Phi$. The associated area formula, e.g.~\cite[Section~3.3.2, Theorem~1]{MR3409135}, would then contradict \eqref{e.chg-var++}. This concludes the proof that $\Phi$ is injective on $\R^2 \setminus \mathcal I$ almost surely.
\end{proof}

\section*{Acknowledgements}
The authors thank Matthieu Hillairet for preliminary discussions on the topic.
MD acknowledges financial support from the F.R.S.-FNRS, as well as from the European Union (ERC, PASTIS, Grant Agreement n$^\circ$101075879). AG acknowledges financial support from the European Research Council (ERC) under the European Union's Horizon 2020 research and innovation programme (Grant Agreement n$^\circ$864066). Views and opinions expressed are however those of the authors only and do not necessarily reflect those of the European Union or the European Research Council Executive Agency. Neither the European Union nor the granting authority can be held responsible for them.

\bibliographystyle{abbrv}
\bibliography{biblio}

\def\cprime{$'$} \def\cprime{$'$}
\begin{thebibliography}{10}

\bibitem{Alessandrini-Nesi-01}
G.~Alessandrini and V.~Nesi.
\newblock Univalent {$\sigma$}-harmonic mappings.
\newblock {\em Arch. Ration. Mech. Anal.}, 158(2):155--171, 2001.

\bibitem{Alessandrini-Nesi-18}
G.~Alessandrini and V.~Nesi.
\newblock Locally invertible {$\sigma$}-harmonic mappings.
\newblock {\em Rend. Mat. Appl. (7)}, 39(2):195--203, 2018.

\bibitem{Allaire-92}
G.~Allaire.
\newblock Homogenization and two-scale convergence.
\newblock {\em SIAM J. Math. Anal.}, 23(6):1482--1518, 1992.

\bibitem{Arnold-91}
V.~I. Arnol\cprime{}d.
\newblock Topological and ergodic properties of closed {$1$}-forms with
  incommensurable periods.
\newblock {\em Funktsional. Anal. i Prilozhen.}, 25(2):1--12, 96, 1991.

\bibitem{Noel-Lacave-Masmoudi-15}
V.~Bonnaillie-No\"el, C.~Lacave, and N.~Masmoudi.
\newblock Permeability through a perforated domain for the incompressible {2D}
  {E}uler equations.
\newblock {\em Ann. Inst. H. Poincar\'e Anal. Non Lin\'eaire}, 32(1):159--182,
  2015.

\bibitem{Bourgeat-Mikelic-Wright-94}
A.~Bourgeat, A.~Mikeli{\'c}, and S.~Wright.
\newblock Stochastic two-scale convergence in the mean and applications.
\newblock {\em J. Reine Angew. Math.}, 456:19--51, 1994.

\bibitem{Bresch-GV-07}
D.~Bresch and D.~G\'{e}rard-Varet.
\newblock On some homogenization problems from shallow water theory.
\newblock {\em Appl. Math. Lett.}, 20(5):505--510, 2007.

\bibitem{Briane-Herve-21}
M.~Briane and L.~Herv\'{e}.
\newblock A picture of the {ODE}'s flow in the torus: from everywhere or
  almost-everywhere asymptotics to homogenization of transport equations.
\newblock {\em J. Differential Equations}, 304:165--190, 2021.

\bibitem{Briane-Herve-22}
M.~Briane and L.~Herv\'e.
\newblock Specific properties of the {ODE}'s flow in dimension two versus
  dimension three.
\newblock {\em J. Dyn. Differ. Equ.}, pages 1--41, 2022.

\bibitem{Cheverry-06}
C.~Cheverry.
\newblock Cascade of phases in turbulent flows.
\newblock {\em Bull. Soc. Math. France}, 134(1):33--82, 2006.

\bibitem{Cheverry-08}
C.~Cheverry.
\newblock Recent results in large amplitude monophase nonlinear geometric
  optics.
\newblock In {\em Instability in models connected with fluid flows. {I}},
  volume~6 of {\em Int. Math. Ser. (N. Y.)}, pages 267--288. Springer, New
  York, 2008.

\bibitem{Cheverry-Gues-08}
C.~Cheverry and O.~Gu\`es.
\newblock Counter-examples to concentration-cancellation.
\newblock {\em Arch. Ration. Mech. Anal.}, 189(3):363--424, 2008.

\bibitem{DiPerna-Majda-87}
R.~J. DiPerna and A.~J. Majda.
\newblock Oscillations and concentrations in weak solutions of the
  incompressible fluid equations.
\newblock {\em Comm. Math. Phys.}, 108(4):667--689, 1987.

\bibitem{D-17}
M.~Duerinckx.
\newblock Well-posedness for mean-field evolutions arising in
  superconductivity.
\newblock {\em Ann. Inst. H. Poincar\'e Anal. Non Lin\'eaire},
  35(5):1267--1319, 2017.

\bibitem{DG-16a}
M.~Duerinckx and A.~Gloria.
\newblock Analyticity of homogenized coefficients under {B}ernoulli
  perturbations and the {C}lausius-{M}ossotti formulas.
\newblock {\em Arch. Ration. Mech. Anal.}, 220(1):297--361, 2016.

\bibitem{MR4468195}
M.~Duerinckx and A.~Gloria.
\newblock Quantitative homogenization theory for random suspensions in steady
  {S}tokes flow.
\newblock {\em J. \'{E}c. polytech. Math.}, 9:1183--1244, 2022.

\bibitem{DG-CM}
M.~Duerinckx and A.~Gloria.
\newblock The {C}lausius-{M}ossotti formula.
\newblock {\em Asymptot. Anal.}, 134(3-4):437--453, 2023.

\bibitem{DG-Einstein}
M.~Duerinckx and A.~Gloria.
\newblock {\em On {E}instein's effective viscosity formula}, volume~7 of {\em
  Memoirs of the European Mathematical Society}.
\newblock EMS Press, Berlin, 2023.

\bibitem{E-92}
W.~E.
\newblock Homogenization of linear and nonlinear transport equations.
\newblock {\em Comm. Pure Appl. Math.}, 45(3):301--326, 1992.

\bibitem{MR3409135}
L.~C. Evans and R.~F. Gariepy.
\newblock {\em Measure theory and fine properties of functions}.
\newblock Textbooks in Mathematics. CRC Press, Boca Raton, FL, revised edition,
  2015.

\bibitem{Franks-Misiurewicz-90}
J.~Franks and M.~Misiurewicz.
\newblock Rotation sets of toral flows.
\newblock {\em Proc. Amer. Math. Soc.}, 109(1):243--249, 1990.

\bibitem{MR4103433}
A.~Gloria, S.~Neukamm, and F.~Otto.
\newblock A regularity theory for random elliptic operators.
\newblock {\em Milan J. Math.}, 88(1):99--170, 2020.

\bibitem{Hillairet-Lacave-Wu-22}
M.~Hillairet, C.~Lacave, and D.~Wu.
\newblock A homogenized limit for the 2-dimensional {E}uler equations in a
  perforated domain.
\newblock {\em Anal. PDE}, 15(5):1131--1167, 2022.

\bibitem{Hofer-23}
R.~M. H\"ofer.
\newblock Homogenization of the {N}avier-{S}tokes equations in perforated
  domains in the inviscid limit.
\newblock {\em Nonlinearity}, 36(11):6019--6046, 2023.

\bibitem{Jabin-Tzavaras-09}
P.-E. Jabin and A.~E. Tzavaras.
\newblock Kinetic decomposition for periodic homogenization problems.
\newblock {\em SIAM J. Math. Anal.}, 41(1):360--390, 2009.

\bibitem{JKO94}
V.~V. Jikov, S.~M. Kozlov, and O.~A. Ole{\u\i}nik.
\newblock {\em Homogenization of differential operators and integral
  functionals}.
\newblock Springer-Verlag, Berlin, 1994.

\bibitem{Yudovich-63}
V.~I. Judovi\v{c}.
\newblock Non-stationary flows of an ideal incompressible fluid.
\newblock {\em \v{Z}. Vy\v{c}isl. Mat i Mat. Fiz.}, 3:1032--1066, 1963.

\bibitem{Kikuchi}
K.~Kikuchi.
\newblock Exterior problem for the two-dimensional {E}uler equation.
\newblock {\em J. Fac. Sci. Univ. Tokyo Sect. IA Math.}, 30(1):63--92, 1983.

\bibitem{Lacave-Masmoudi-16}
C.~Lacave and N.~Masmoudi.
\newblock Impermeability through a perforated domain for the incompressible
  two-dimensional {E}uler equations.
\newblock {\em Arch. Ration. Mech. Anal.}, 221(3):1117--1160, 2016.

\bibitem{Lacave-Mazzucato-16}
C.~Lacave and A.~L. Mazzucato.
\newblock The vanishing viscosity limit in the presence of a porous medium.
\newblock {\em Math. Ann.}, 365(3-4):1527--1557, 2016.

\bibitem{MR1668082}
J.~C. Lagarias.
\newblock Geometric models for quasicrystals {I}. {D}elone sets of finite type.
\newblock {\em Discrete Comput. Geom.}, 21(2):161--191, 1999.

\bibitem{Levermore-Oliver-Titi-1}
C.~D. Levermore, M.~Oliver, and E.~S. Titi.
\newblock Global well-posedness for models of shallow water in a basin with a
  varying bottom.
\newblock {\em Indiana Univ. Math. J.}, 45(2):479--510, 1996.

\bibitem{Levermore-Oliver-Titi-2}
C.~D. Levermore, M.~Oliver, and E.~S. Titi.
\newblock Global well-posedness for the lake equations.
\newblock {\em Phys. D}, 98:492--509, 1996.

\bibitem{Lions-Masmoudi-05}
P.-L. Lions and N.~Masmoudi.
\newblock Homogenization of the {E}uler system in a 2{D} porous medium.
\newblock {\em J. Math. Pures Appl. (9)}, 84(1):1--20, 2005.

\bibitem{Majda-Bertozzi}
A.~J. Majda and A.~L. Bertozzi.
\newblock {\em Vorticity and incompressible flow}, volume~27 of {\em Cambridge
  Texts in Applied Mathematics}.
\newblock Cambridge University Press, Cambridge, 2002.

\bibitem{Mikelic-Paoli-99}
A.~Mikeli\'{c} and L.~Paoli.
\newblock Homogenization of the inviscid incompressible fluid flow through a
  {$2$}d porous medium.
\newblock {\em Proc. Amer. Math. Soc.}, 127(7):2019--2028, 1999.

\bibitem{MuratTartar}
F.~Murat and L.~Tartar.
\newblock {$H$}-convergence.
\newblock In {\em Topics in the mathematical modelling of composite materials},
  volume~31 of {\em Progr. Nonlinear Differential Equations Appl.}, pages
  21--43. Birkh\"auser Boston, Boston, MA, 1997.

\bibitem{Nguetseng-89}
G.~Nguetseng.
\newblock A general convergence result for a functional related to the theory
  of homogenization.
\newblock {\em SIAM J. Math. Anal.}, 20(3):608--623, 1989.

\bibitem{Zhikov-Pyatnitski-06}
V.~V. Zhikov and A.~L. Pyatnitski\u{\i}.
\newblock Homogenization of random singular structures and random measures.
\newblock {\em Izv. Ross. Akad. Nauk Ser. Mat.}, 70(1):23--74, 2006.

\end{thebibliography}

\end{document}